%
%
\documentclass[12pt]{article}
\textwidth=15cm
\usepackage{mathrsfs}
\usepackage{amsmath, amssymb, amsthm, latexsym, amsfonts, float, epsfig, epstopdf, subcaption, caption, color,graphicx}

\begin{document}

\numberwithin{equation}{section}

\newcommand{\s}{\sigma}
\renewcommand{\k}{\kappa}
\newcommand{\p}{\partial}
\newcommand{\D}{\Delta}
\newcommand{\om}{\omega}
\newcommand{\Om}{\Omega}
\renewcommand{\phi}{\varphi}
\newcommand{\e}{\epsilon}
\renewcommand{\a}{\alpha}
\renewcommand{\b}{\beta}
\newcommand{\N}{{\mathbb N}}
\newcommand{\R}{{\mathbb R}}
   \newcommand{\eps}{\varepsilon}
   \newcommand{\EX}{{\Bbb{E}}}
   \newcommand{\PX}{{\Bbb{P}}}

\newcommand{\cF}{{\cal F}}
\newcommand{\cG}{{\cal G}}
\newcommand{\cD}{{\cal D}}
\newcommand{\cO}{{\cal O}}

\newcommand{\de}{\delta}

\newcommand{\grad}{\nabla}
\newcommand{\n}{\nabla}
\newcommand{\curl}{\nabla \times}
\newcommand{\dive}{\nabla \cdot}

\newcommand{\ddt}{\frac{d}{dt}}
\newcommand{\la}{{\lambda}}

\newtheorem{theorem}{Theorem}
\newtheorem{lemma}{Lemma}
\newtheorem{remark}{Remark}
\newtheorem{example}{Example}
\newtheorem{note}{Note}
\newtheorem{definition}{Definition}
\newtheorem{corollary}{Corollary}

\def\proof{\mbox {\it Proof.~}}

\makeatletter\def\theequation{\arabic{section}.\arabic{equation}}\makeatother

\centerline{\large \bf Slow manifolds for stochastic systems}
\centerline{\large \bf with non-Gaussian stable L\'evy noise\footnote{This work was partly supported
by the NSFC grants 11301197, 11301403, 11371367, 11271290 and 0118011074.}\hspace{2mm}
\vspace{1mm}\vspace{1mm}\\ }
\bigskip
\centerline{\bf Shenglan Yuan$^{a,b,c,}\footnote{shenglanyuan@hust.edu.cn}$, Jianyu Hu$^{a,b,c,}\footnote{hujianyu@hust.edu.cn}$,
Xianming Liu$^{b,c,}\footnote{xmliu@hust.edu.cn}$,Jinqiao Duan$^{d,}\footnote{ duan@iit.edu}$}
\smallskip
\centerline{${}^a$Center for Mathematical Sciences,}
 \centerline{${}^b$School of Mathematics and Statistics,}
  \centerline{${}^c$Hubei Key Laboratory of Engineering Modeling and Scientific Computing,}
 \centerline{Huazhong University of Sciences and Technology, Wuhan 430074, China}
\centerline{${}^d$Department of Applied Mathematics}
\centerline{Illinois Institute of Technology, Chicago, IL 60616, USA}
\bigskip\par

\begin{abstract}
This work is concerned with the dynamics of a class of slow-fast stochastic dynamical systems with non-Gaussian stable L\'evy noise with a scale parameter. Slow manifolds with exponentially tracking property are constructed, eliminating the fast variables to reduce the dimension of these coupled dynamical systems. It is shown that as the scale parameter tends to zero, the slow manifolds converge to critical manifolds in distribution, which helps understand long time dynamics. The approximation of slow manifolds with error estimate in distribution are also considered.

\end{abstract}

{\it \footnotesize \textbf{Keywords}}. {\scriptsize Stochastic differential equations,
random dynamical systems, slow manifolds, critical manifolds, dimension reduction.}

\setcounter{secnumdepth}{5} \setcounter{tocdepth}{5}

\makeatletter
    \newcommand\figcaption{\def\@captype{figure}\caption}
    \newcommand\tabcaption{\def\@captype{table}\caption}
\makeatother


\section{\bf Introduction }

Stochastic effects are ubiquitous in complex systems in science and engineering \cite{IM02,PZ07,Wo01}. Although random mechanisms may appear to be very small or very fast, their long time impacts on the system evolution may be delicate or even profound, which has been observed in, for example, stochastic bifurcation, stochastic optimal control, stochastic resonance and noise-induced pattern formation \cite{Ar03,PK08,Wa02}. Mathematical modeling of complex systems under uncertainty often leads to stochastic differential equations (SDEs) \cite{Ku04,Pr04,Ro05}. Fluctuations appeared in the SDEs are often non-Gaussian (e.g., L\'evy motion) rather than Gaussian (e.g., Brownian motion); see Schilling \cite{BSW14,SP12}.

We consider the slow-fast stochastic dynamical system where the fast dynamic is driven by $\alpha$-stable noise, see \cite{GL13,POC99}.
In particular, we study
\begin{eqnarray}\label{3.2}
\left\{\begin{array}{l}
 dx^\varepsilon=Sx^\varepsilon dt+g_{1}(x^\varepsilon,y^\varepsilon)dt,~  \\
dy^\varepsilon=\frac{1}{\varepsilon}Fy^\varepsilon dt+\frac{1}{\varepsilon}
 g_{2}(x^\varepsilon,y^\varepsilon)dt+\sigma\varepsilon^{-\frac{1}{\alpha}}dL_t^{\alpha},
 \end{array}\right.
\end{eqnarray}
where $(x^\varepsilon,y^\varepsilon)\in\mathbb{R}^{n_{1}}\times \mathbb{R}^{n_{2}}$, $\varepsilon$ is a small positive parameter measuring slow and fast time scale separation such that in a formal sense
\begin{equation*}
  |\frac{dx^\varepsilon}{dt}|\ll|\frac{dy^\varepsilon}{dt}|,
\end{equation*}
where we denote by $|.|$ the Euclidean norm.
The $n_{1}\times n_{1}$ matrix $S$ with all eigenvalues with non-negative real part, $F$ is a $n_{2}\times n_{2}$
matrix whose eigenvalues have negative real part.
Nonlinearities $g_{i}:\mathbb{R}^{n_{1}}\times\mathbb{R}^{n_{2}}\mapsto\mathbb{R}^{n_{i}}, i=1,2$
are Lipschitz continuous functions with $g_{i}(0,0)=0$. $\{L_t^{\alpha}: t\in\mathbb{R}\}$ is a two-sided $\mathbb{R}^{n_{2}}$-valued $\alpha$-stable
L\'evy process on a probability space $(\Omega,\mathcal{F},\mathbb{P})$, where $1<\alpha<2$ is the index of stability \cite{Ap04,Du15}. The strength of noise in the fast equation is chosen to be $\varepsilon^{-\frac{1}{\alpha}}$
to balance the stochastic force and deterministic force. $\sigma>0$ is the intensity of noise.

Invariant manifolds are geometric structures in state space that are useful in investigating the dynamical behaviors of stochastic systems; see \cite{CS10,CDZ14,DLS03,DLS04}. A slow manifold is a special invariant manifold of a slow-fast system, where the fast variable is represented by the slow variable
and the scale parameter $\varepsilon$ is small. Moreover, it exponentially attracts other orbits. A critical manifold of a slow-fast system is the slow manifold corresponding to the zero scale parameter \cite{Ge05}. The theory of slow manifolds and critical
manifolds provides us with a powerful tool for analyzing geometric structures of slow-fast stochastic dynamical systems, and reducing the dimension of those systems.

For a system like \eqref{3.2} based on Brownian noise ($\alpha=2$), the existence of the slow manifold and its approximation has been extensively studied \cite{Ba98,DW14,FLD12,SS08,WR13}. The dynamics of individual sample solution paths have also been quantified; see \cite{ BB06,KP13,WR12}. Moreover, Ren and Duan \cite{RDJ15,RDW15} provided a numerical simulation for the slow manifold and established its parameter estimation. The study of the dynamics generated by SDEs under non-Gaussian L\'evy noise is still in its infancy, but some interesting works are emerging \cite{Du15,Ku16,LAD11}.

The main goal of this paper it to investigate the slow manifold of dynamical system \eqref{3.2} driven by $\alpha$-stable
L\'evy process with $\alpha\in(1,2)$ in finite dimensional setting, and examine its approximation and structure.

We first introduce a random transformation based on the generalised Ornstein-Uhlenbeck process, such that a solution the system of SDEs \eqref{3.2} with $\alpha$-stable L\'evy noise can be represented as a transformed solution of random differential equations (RDEs) and vice versa. Then we prove that, for $0<\varepsilon\ll1$, the slow manifold $\mathcal{M}^{\varepsilon}$ with an exponential tracking property can be constructed as fixed point of the projected RDEs by using the Lyapunov-Perron method \cite{Bo89}. Thus as a consequence, with the inverse conversion, we can obtain the slow manifold $M^{\varepsilon}$ for the original SDE system. Subsequently we convert the above RDEs to new RDEs by taking the time scaling $t\rightarrow\varepsilon t$.  After that we use the Lyapunov-Perron method once again to establish the existence of the slow manifold $\mathcal{\tilde{M}}^{\varepsilon}$ for new RDE system, and denote $\mathcal{M}^{0}(\omega)$ as the critical manifold with zero scale parameter in particular. In addition, we show that $\mathcal{M}^{\varepsilon}$ is same as $\mathcal{\tilde{M}}^{\varepsilon}$ in distribution, and the distribution of $\mathcal{M}^{\varepsilon}(\omega)$ converges to the distribution of $\mathcal{M}^{0}(\omega)$, as $\varepsilon$ tends to zero.
Finally, we derive an asymptotic approximation for the slow manifold $\mathcal{M}^{\varepsilon}$ in distribution. Moreover, as part of ongoing studies, we try to study mean residence time on slow manifold, and generalise these results to consider system \eqref{3.2} in Hilbert spaces to study  infinite dimensional dynamics.

This paper is organized as follows. In Section 2, we recall some basic concepts in random dynamical systems, and construct metric dynamical systems driven by L\'evy processes with two-sided time. In Section 3, we recall random invariant manifolds and introduce hypotheses for the slow-fast system.
In Section 4, we show the existence of slow manifold (Theorem \ref{slowmanifold}), and measure the rate of slow manifold attract other dynamical orbits (Theorem \ref{Etp}). In Section 5, we prove that as the scale parameter tends to zero, the slow manifold converges to the critical manifold in distribution (Theorem \ref{theorem5}). In Section 6, we present numerical results using examples from mathematical biology to corroborate our analytical results.

\section{\bf Random Dynamical Systems}\label{s2}
We are going to introduce the main tools we need
to find inertial manifolds for systems of stochastic differential equations driven
by $\alpha$-stable L\'evy noise. These tools stem from the theory of random
dynamical systems; see Arnold \cite{Ar03}.

An appropriate model for noise is a metric dynamical system $\Theta=(\Omega,\mathcal{F},\mathbb{P},\theta)$, which consists of
a probability space $(\Omega,\mathcal{F},\mathbb{P})$ and a flow $\theta=\{\theta_{t}\}_{t\in\mathbb{R}}$:
\[
   \theta:\ \mathbb{R}\times\Omega\to\Omega,\ (t,\omega)\mapsto\theta_{t}\omega;\
   \theta_0={\rm id}_\Omega,\ \theta_{s+t}=\theta_s\circ\theta_t=:\theta_s\theta_t.
   \]
The flow $\theta$ is jointly $\mathcal{B}(\mathbb{R})\otimes\mathcal{F}-\mathcal{F}$ measurable.
All $\theta_t$ are measurably invertible with $\theta_t^{-1}=\theta_{-t}$. In addition, the probability measure $\mathbb{P}$ is
invariant (ergodic) with respect to the mappings $\{\theta_{t}\}_{t\in\mathbb{R}}$.

 For example, the L\'evy process with two side time represents a metric dynamical system. Let $L=(L_{t})_{t\geq0}$ with $L_{0}=0$ a.s. be a L\'evy process with values in $\mathbb{R}^{n}$ defined on the canonical probability space $(E^{\mathbb{R}^{+}},\mathcal{E}^{\mathbb{R}^{+}},\mathbb{P}_{\mathbb{R}^{+}})$, in which $E=\mathbb{R}^{n}$ endowed with the Borel $\sigma$-algebra $\mathcal{E}$. We can construct the corresponding two-sided L\'evy process $L_{t}(\omega):=\omega(t), t\in\mathbb{R}$ defined on $(E^{\mathbb{R}},\mathcal{E}^{\mathbb{R}},\mathbb{P}_{\mathbb{R}})$, see K\"{u}mmel \cite[p29]{Ku16}. Since the paths of a L\'evy process are c\`{a}dl\`{a}g; see \cite[Theorem 2.1.8]{Ap04}. We can define two-sided L\'evy process on the space
$(\Omega,\mathcal{F},\mathbb{P})$ instead of $(E^{\mathbb{R}},\mathcal{E}^{\mathbb{R}},\mathbb{P}_{\mathbb{R}})$, where $\Omega=\mathcal{D}_{0}(\mathbb{R},\mathbb{R}^{n})$ is the space of c\`{a}dl\`{a}g functions starting at $0$ given by
\begin{equation*}
  \mathcal{D}_0=\{\omega :\ \text{for } \forall t\in\mathbb{R},\ \lim_{s\uparrow t}\omega(s)=\omega(t-),\  \lim_{s\downarrow t}\omega(s)=\omega(t)\ \text{exist}\ \text{and} \ \omega(0)=0  \}.
\end{equation*}
This space equipped with Skorokhod's $\mathcal{J}_{1}$-topology generated by the metric $\rm{d}_{\mathbb{R}}$ is a Polish space. For functions $\omega_{1}, \omega_{2}\in\mathcal{D}_{0}$, ${\rm d}_{\mathbb{R}}(\omega_{1}, \omega_{2})$ is given by
\begin{eqnarray*}
  {\rm d}_{\mathbb{R}}(\omega_{1},\omega_{2})=\inf\left\{\varepsilon>0:
     |\omega_{1}(t)-\omega_{2}(\lambda t)|\leq\varepsilon,\ \big|\ln\frac{\arctan(\lambda t)-\arctan(\lambda s)}{\arctan(t)-\arctan(s)}\big|\leq\varepsilon\right.
     \\
    \left.\text{for every}~\ t, s\in\mathbb{R}\ \text{and some}\ \lambda\in\Lambda^{\mathbb{R}}\right\},
\end{eqnarray*}
where
\begin{equation*}
\Lambda^{\mathbb{R}}=\{\lambda :\mathbb{R}\rightarrow\mathbb{R};\ \lambda\ \text{is injective increasing},\ \lim\limits_{t\rightarrow-\infty}\lambda(t)=-\infty,\ \lim\limits_{t\rightarrow\infty}\lambda(t)=\infty \}.
\end{equation*}
$\mathcal{F}$ is the associated Borel $\sigma$-algebra $\mathcal{B}(\mathcal{D}_{0})=\mathcal{E}^{\mathbb{R}}\cap\mathcal{D}_{0}$, and $(\mathcal{D}_{0},\mathcal{B}(\mathcal{D}_{0}))$ is a separable metric space.
The probability measure $\mathbb{P}$ generated by $\mathbb{P}(\mathcal{D}_{0}\cap A):=\mathbb{P}_{\mathbb{R}}(A)$ for each $A\in\mathcal{E}^{\mathbb{R}}$. The flow $\theta$ is given by
\[
  \theta:\mathbb{R}\times\mathcal{D}_{0}\rightarrow\mathcal{D}_{0},\ \theta_{t}\omega(\cdot)\mapsto\omega(\cdot+t)-\omega(t),
\]
which is a Carath\'eodory function. It follows that $\theta$ is jointly measurable. Moreover, $\theta$
satisfies $\theta_{s}(\theta_{t}\omega(\cdot))=\omega(\cdot+s+t)-\omega(s)-(\omega(s+t)-\omega(s))=\theta_{s+t}\omega(\cdot)$ and $\theta_{0}\omega(\cdot)=\omega(\cdot)$. Thus $(\mathcal{D}_{0},\mathcal{B}(\mathcal{D}_{0}),\mathbb{P},(\theta_{t})_{t\in\mathbb{R}})$ is a metric dynamical system generated by L\'evy process with two-side time. Note that the probability measure $\mathbb{P}$ is ergodic with respect to the flow $\theta=(\theta_{t})_{t\in\mathbb{R}}$.

In the above we define metric dynamical system first, which will step in  $\theta$ in the complete definition of random dynamical system, that is strongly motivated by the measuability property combined with the cocycle property.

A random dynamical system taking values in the measurable space $(H,\mathcal{B}(H))$ over a  metric dynamical system $(\Omega,\mathcal{F},\mathbb{P},(\theta_{t})_{t\in\mathbb{R}})$ with time space $\mathbb{R}$ is given by a mapping
\begin{equation*}
    \phi: \mathbb{R}\times\Omega\times H \to H,
\end{equation*}
 that is jointly $\mathcal{B}(\mathbb{R})\otimes\mathcal{F}\otimes\mathcal{B}(H)-\mathcal{B}(H)$ measurable and satisfies the cocycle property:
\begin{equation}\label{cocycle}\begin{array}{c}
     \phi(0,\omega,\cdot)={\rm id}_{H}=\text{ identity on}\ H\ \text{ for each}\ \omega\in\Omega;\\
   \phi(t+s,\omega,\cdot)=\phi\big(t,\theta_s\omega,\phi(s,\omega,\cdot)\big)\ \text{ for each}\ s,\,t\in\mathbb{R},\ \omega\in\Omega.\end{array}
\end{equation}
For our application, in the sequel we suppose $H=\mathbb{R}^{n}=\mathbb{R}^{n_{1}}\times \mathbb{R}^{n_{2}}$.

Note that if $\phi$ satisfies the cocycle property \eqref{cocycle} for almost all $\omega\in\Omega\backslash\mathcal{N}_{s,t}$ (where the exceptional set $\mathcal{N}_{s,t}$ can depend on $s,t$), then we say $\phi$ forms a crude cocycle instead of a perfect cocycle. In this case, to get a random dynamical system, sometimes we can do a perfection of the crude cocycle, such that the cocycle property is valid for each and every $\omega\in\Omega$, see Scheutzow $\cite{Sc96}$.

We now recall some objects to help understand the dynamics of a random
dynamical system.

A random variable $\omega\mapsto X(\omega)$ with values in $H$ is called a stationary orbit (or random
fixed point) for a random dynamical system $\phi$ if
\begin{equation}\nonumber
     \phi(t,\omega,X(\omega))=X(\theta_{t}\omega)\ \text{ for }\ t\in\mathbb{R},\ \omega\in\Omega.
\end{equation}
Since the probability measure $\mathbb{P}$ is
invariant with respect to $\{\theta_{t}\}_{t\in\mathbb{R}}$, the random variables $\omega\mapsto X(\theta_{t}\omega)$
have the same distribution as $\omega\mapsto X(\omega)$. Thus $(t,\omega)\mapsto X(\theta_{t}\omega)$ is a stationary process , and therefore a stationary solution to the stochastic differential equation generating the random dynamical system $\phi$.

A family of nonempty closed sets $M=\{M(\omega)\subset\mathbb{R}^{n},\omega\in\Omega\}$
is call a random set for a random dynamical system $\varphi$, if the mapping
\[
\omega\mapsto{\rm dist}(z,M(\omega)):=\inf\limits_{z^{'}\in M(\omega)}|z-z^{'}|
\]
is a random variable for every $z\in\mathbb{R}^{n}$. Moreover $M$ is called an (positively) invariant set, if
\begin{equation}\label{invariant set}
\varphi(t,\omega,M(\omega))\subseteq M(\theta_t\omega)\ \text{ for }\ t\in\mathbb{R},\ \omega\in\Omega.
\end{equation}

Let
\[
h:\Omega\times\mathbb{R}^{n_{1}}\rightarrow\mathbb{R}^{n_{2}},\ (\omega,x)\mapsto h(\omega,x)
\]
be a function such that for all $\omega\in\Omega$, $x\mapsto h(\omega,x)$ is Lipschitz continuous, and for any $x\in\mathbb{R}^{n_{1}}$,  $\omega\mapsto h(\omega,x)$ is a random variable. We define
\[
\mathcal{M}(\omega)=\{(x,h(\omega,x)):x\in\mathbb{R}^{n_{1}}\}
\]
such that $\mathcal{M}=\{\mathcal{M}(\omega)\subset\mathbb{R}^{n},\omega\in\Omega\}$ can be represented as a graph of $h$.
It can be shown \cite[Lemma 2.1]{SS08} that $\mathcal{M}$ is a random set.

If $\mathcal{M}(\omega), \omega\in\Omega$ also satisfy \eqref{invariant set}, $\mathcal{M}$ is called a Lipschitz continuous invariant manifold.
Furthermore, $\mathcal{M}$ is said to have an exponential tracking property if for all $\omega\in\Omega$, there exists an $z^{'}\in\mathcal{M}(\omega)$ such that,
\[
|\varphi(t,\omega,z)-\varphi(t,\omega,z^{'})|\leq C(z,z^{'},\omega)e^{-ct}|z-z^{'}|,\ t\geq0,\ z\in \mathbb{R}^{n},
\]
where $C$ is a positive random variable depending on $z$ and $z^{'}$, while $c$ is a positive constant. Then $\mathcal{M}$ is called a random slow manifold with respect to the random dynamical system $\varphi$.

Let $\varphi$ and $\tilde{\varphi}$ be two random dynamical systems. Then $\varphi$ and $\tilde{\varphi}$ are called conjugated, if there is a random mapping $T:\Omega\times\mathbb{R}^{n}\rightarrow\mathbb{R}^{n}$, such that for all $\omega\in\Omega$, $(t,z)\mapsto T(\theta_{t}\omega,z)$ is a Carath\'eodory function, for every $t\in\mathbb{R}^{n}$ and $\omega\in\Omega$, $z\mapsto T(\theta_{t}\omega,z)$ is homeomorphic, and
\begin{equation}\label{conjugacy}
\tilde{\varphi}(t,\omega,z)=T(\theta_{t}\omega,\varphi(t,\omega,T^{-1}(\omega,z))),\ z\in\mathbb{R}^{n},
\end{equation}
where $T^{-1}:\Omega\times\mathbb{R}^{n}\rightarrow\mathbb{R}^{n}$ is the corresponding inverse mapping of $T$. Note that $T$ provides a random transformation form $\varphi$ to $\tilde{\varphi}$ that may be simpler to treat. If $\tilde{M}$ is a invariant set for the random dynamical system $\tilde{\varphi}$, we define
\[
M(\cdot):=T^{-1}(\cdot,\tilde{M}(.)).
\]
From the properties of $T$, $M$ is also invariant set with respect to $\varphi$.


\section{Slow-Fast Dynamical Systems}\label{section3}


\bigskip

The theory of invariant manifolds and slow manifolds of random dynamical system are essential for the study of the solution orbits, and we can use it to simplify dynamical systems by reducing an random dynamical system on a lower-dimensional manifold.

For the slow-fast system \eqref{3.2} described by stochastic differential equations
with $\alpha$-stable L\'evy noise, the state space for slow variables is $\mathbb{R}^{n_{1}}$,
the state space for fast variables is $\mathbb{R}^{n_{2}}$. To construct the slow manifolds of system \eqref{3.2},
we introduce the following hypotheses.

Concerning the linear part of \eqref{3.2}, we suppose

$(A_{1})$. There are constants $\gamma_{s}>0$ and $\gamma_{f}<0$ such that
\begin{align*}
|e^{St}x|&\leq e^{\gamma_{s}t}|x|,\ t\leq0,\ \mbox{for~all}~x\in\mathbb{R}^{n_{1}},\\
|e^{Ft}y|&\leq e^{\gamma_{f}t}|y|,\ t\geq0,\ \mbox{for~all}~y\in\mathbb{R}^{n_{2}}.
\end{align*}

With respect to the nonlinear parts of system \eqref{3.2}, we assume

$(A_{2})$. There exists a constant $K>0$
such that for all $(x_{i},y_{i})\in\mathbb{R}^{n},\ i=1,2,$,
\begin{eqnarray*}
  &&|g_{i}(x_{1},y_{1})-g_{i}(x_{2},y_{2})|\leq K(|x_{1}-x_{2}|+|y_{1}-y_{2}|),
\end{eqnarray*}
which implies that $g_{i}$ are continuous and thus measurable with respect to all variables. If $g_{i}$ is locally Lipshitz, but the corresponding deterministic system has a bounded absorbing set. By cutting off $g_{i}$ to zero outside a ball containing the absorbing set, the modified system has globally Lipschitz drift \cite{K13}.

For the proof of the existence of a random invariant manifold parametrized by $x\in\mathbb{R}^{n_{1}}$, we have to assume that the following spectral gap condition.

$(A_{3})$.  The decay rate $-\gamma_{f}$ of $e^{Ft}$ is larger than the Lipschitz constant $K$ of the
nonlinear parts in system \eqref{3.2}, i.e. $K<-\gamma_{f}$.
\begin{lemma}\label{lemma1}
Under hypothesis $(A_{1})$, the following linear stochastic differential equations
\begin{align}\label{3.3}
   d\eta^{\frac{1}{\varepsilon}}(t)&=\frac{1}{\varepsilon}F\eta^{\frac{1}{\varepsilon}}dt+\varepsilon^{-\frac{1}{\alpha}}dL_t^{\alpha},\ \eta^{\frac{1}{\varepsilon}}(0)=\eta_{0}^{\frac{1}{\varepsilon}},\\\label{3.4}
  d\xi(t)&=F\xi dt+dL_t^{\alpha},\ \xi(0)=\xi_{0},
\end{align}
have c\`{a}dl\`{a}g stationary solutions $\eta^{\frac{1}{\varepsilon}}(\theta_{t}\omega)$ and $\xi(\theta_{t}\omega)$
defined on
$\theta$-invariant set $\Omega$ of full measure, through the random variables
\begin{equation}\label{intial}
\eta^{\frac{1}{\varepsilon}}(\omega)=\varepsilon^{-\frac{1}{\alpha}}\int_{-\infty}^{0} e^{\frac{-Fs}{\varepsilon}}dL_{s}^{\alpha}(\omega),\ \xi(\omega)=\int_{-\infty}^{0}e^{-Fs}dL_s^{\alpha}(\omega),
\end{equation}
respectively. Moreover, they generate random dynamical systems.
\end{lemma}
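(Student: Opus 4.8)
The plan is to realise the stationary solutions as pathwise improper integrals, defined $\omega$-by-$\omega$ on a $\theta$-invariant set of full measure, and then to recognise the variation-of-constants flow of the linear equation as a cocycle. I will carry out the argument for \eqref{3.4}; equation \eqref{3.3} is handled identically with $F$, $L^\alpha$ replaced by $F/\varepsilon$, $\varepsilon^{-1/\alpha}L^\alpha$ (equivalently, by the self-similarity of $L^\alpha$, by the time change $t\mapsto t/\varepsilon$). The first step is a pathwise growth bound on the noise: since $1<\alpha<2$, the $\alpha$-stable law has a finite first absolute moment, so the strong law of large numbers applied to the unit-time increments of $L^\alpha$ (together with a routine control of the oscillation within each unit interval, and the analogous statement for $t<0$) produces a $\theta$-invariant set $\Omega_0\in\mathcal F$ with $\mathbb P(\Omega_0)=1$ on which $|L_t^\alpha(\omega)|\le C(\omega)(1+|t|)$ for every $t\in\mathbb R$. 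After restricting the base space to $\Omega_0$ (still called $\Omega$), this linear bound holds for all $\omega$.

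Next I define $\xi(\omega)$ by \eqref{intial}, reading the improper stochastic integral through integration by parts: since $L_0^\alpha=0$ and $d(e^{-Fs}L_s^\alpha)=-Fe^{-Fs}L_s^\alpha\,ds+e^{-Fs}\,dL_s^\alpha$, one has $\xi(\omega)=\int_{-\infty}^0 Fe^{-Fs}L_s^\alpha(\omega)\,ds$. By $(A_1)$ we have $\|e^{-Fs}\|\le e^{-\gamma_f s}$ for $s\le 0$ with $-\gamma_f>0$, so the integrand is dominated by $C(\omega)\,e^{-\gamma_f s}(1+|s|)$, which is Lebesgue integrable on $(-\infty,0]$, and the boundary contribution $e^{-Fs}L_s^\alpha(\omega)$ vanishes as $s\to-\infty$; thus $\xi$, and likewise $\eta^{1/\varepsilon}$, is well-defined on $\Omega$ and is measurable in $\omega$ because $(s,\omega)\mapsto L_s^\alpha(\omega)$ is jointly measurable. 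To see that $t\mapsto\xi(\theta_t\omega)$ is a solution of \eqref{3.4}, I use the additive cocycle identity $L_s^\alpha(\theta_t\omega)=L_{s+t}^\alpha(\omega)-L_t^\alpha(\omega)$ and the substitution $r=s+t$ to obtain
\[
\xi(\theta_t\omega)=\int_{-\infty}^t e^{-F(r-t)}\,dL_r^\alpha(\omega)=e^{Ft}\xi(\omega)+e^{Ft}\int_0^t e^{-Fr}\,dL_r^\alpha(\omega).
\]
The right-hand side is càdlàg in $t$ because $L^\alpha$ is, and differentiating gives $d\xi(\theta_t\omega)=F\xi(\theta_t\omega)\,dt+dL_t^\alpha(\omega)$ with $\xi(\theta_0\omega)=\xi(\omega)$; stationarity of $t\mapsto\xi(\theta_t\omega)$ follows at once from the $\theta$-invariance of $\mathbb P$.

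Finally, for the random dynamical system I would check that $\phi(t,\omega,z):=e^{Ft}z+\int_0^t e^{F(t-r)}\,dL_r^\alpha(\omega)$, together with the analogous map for \eqref{3.3}, is $\mathcal B(\mathbb R)\otimes\mathcal F\otimes\mathcal B(\mathbb R^{n_2})$-measurable, satisfies $\phi(0,\omega,\cdot)=\mathrm{id}$, and obeys the cocycle property \eqref{cocycle} — the latter following from the semigroup property of $e^{Ft}$ combined with $L_r^\alpha(\theta_s\omega)=L_{r+s}^\alpha(\omega)-L_s^\alpha(\omega)$, after a perfection of the (a priori only crude) cocycle in the sense of Scheutzow \cite{Sc96} if needed. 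I expect the genuine obstacle to be the preceding step, namely giving the two-sided improper integral in \eqref{intial} a rigorous meaning: one must control the growth of the $\alpha$-stable path at $-\infty$ sharply enough that the exponential decay of $e^{-Fs}$ wins (linear growth suffices, and this is also exactly what makes the stable-integral criterion $\int_{-\infty}^0\|e^{-Fs}\|^\alpha\,ds<\infty$ hold), and then manipulate the integral under the shift $\theta_t$ while keeping track of càdlàg modifications and the additive cocycle identity; the remaining bookkeeping is routine.
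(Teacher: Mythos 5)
Your argument is correct and follows essentially the same route as the paper's: define the candidate stationary orbit by the two-sided variation-of-constants integral \eqref{intial}, use the additive cocycle identity $L^\alpha_s(\theta_t\omega)=L^\alpha_{s+t}(\omega)-L^\alpha_t(\omega)$ to identify $t\mapsto\xi(\theta_t\omega)$ with the solution started from $\xi(\omega)$, and read off stationarity and the cocycle structure from that identification. The only substantive difference is that you spell out the pathwise well-definedness of the improper integral via an integration by parts and a sublinear growth bound on $L^\alpha_t(\omega)$ (from the finite first moment for $1<\alpha<2$), whereas the paper simply records in a remark, citing K\"ummel, that $E\log(1+|L_1^\alpha|)<\infty$ suffices for $\xi$ and $\eta^{1/\varepsilon}$ to be well-defined stationary semimartingales; your version is more self-contained on this point but otherwise parallel.
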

\begin{proof}
The SDE \eqref{3.4} has unique c\`{a}dl\`{a}g solution
\begin{equation}\label{orbit}
\varphi(t,w,\xi_{0})=e^{Ft}\xi_{0}+\int_{0}^{t}e^{F(t-s)}dL_s^{\alpha}(\omega),
\end{equation}
for details see \cite{Ap04,Ku16,SY83}. It follows from \eqref{intial} and \eqref{orbit} that
\begin{align*}
\varphi(t,w,\xi(\omega))&=e^{Ft}\xi(\omega)+\int_{0}^{t}e^{F(t-s)}dL_s^{\alpha}(\omega)\\
       &=e^{Ft}\int_{-\infty}^{0}e^{-Fs}dL_s^{\alpha}(\omega)+\int_{0}^{t}e^{F(t-s)}dL_s^{\alpha}(\omega)\\
       &=\int_{-\infty}^{t}e^{F(t-s)}dL_s^{\alpha}(\omega).
\end{align*}
By \eqref{intial}, we also see that
\begin{align*}
\xi(\theta_{t}\omega)&=\int_{-\infty}^{0}e^{-Fs}dL_s^{\alpha}(\theta_t\omega)=\int_{-\infty}^{0}e^{-Fs}d\big(L_{t+s}^{\alpha}(\omega)-L_{t}^{\alpha}(\omega)\big)\\
 &=\int_{-\infty}^{0}e^{-Fs}dL_{t+s}^{\alpha}(\omega)=\int_{-\infty}^{t}e^{F(t-s)}dL_s^{\alpha}(\omega).    \end{align*}
Hence $\varphi(t,w,\xi(\omega))=\xi(\theta_{t}\omega)$ is a stationary orbit for \eqref{3.4}. Then we have
\begin{align*}
\xi(\theta_{t+s}\omega)&=\int_{-\infty}^{t+s}e^{F(t+s-r)}dL_r^{\alpha}(\omega)=e^{F(t+s)}\int_{-\infty}^{t}e^{-F(r+s)}dL_{r+s}^{\alpha}(\omega)\\
                 &=e^{F(t+s)}\int_{-\infty}^{t}e^{-F(r+s)}d\big(L_{r+s}^{\alpha}(\omega)-L_{s}^{\alpha}(\omega)\big)=\int_{-\infty}^{t}e^{F(t-r)}dL_r^{\alpha}(\theta_s\omega)\\
                 &=\xi(\theta_{t}\theta_{s}\omega),
\end{align*}
which implies $\xi$ generate a random dynamical system. Analogously
we obtain the SDE \eqref{3.3} whose unique solution is the
\emph{generalised Ornstein-Uhlenbeck process}
\begin{equation*}
\eta^{\frac{1}{\varepsilon}}(\theta_{t}\omega)=\varepsilon^{-\frac{1}{\alpha}}\int_{-\infty}^{t}e^{\frac{F(t-s)}{\varepsilon}}dL_{s}^{\alpha}.
\end{equation*}
\end{proof}
\begin{remark}
Since $\alpha$-stable process $L_{t}^{\alpha}$ satisfying $E\log(1+|L_{1}^{\alpha}|)<\infty$, $\eta^{\frac{1}{\varepsilon}}(\theta_{t}\omega)$ and $\xi(\theta_{t}\omega)$ are well-defined sationary semimartingales; see \cite[Remark 4.6]{Ku16}.
\end{remark}
\begin{lemma}\label{same distribution}
The process $\eta^{\frac{1}{\varepsilon}}(\theta_{t}\omega)$ has the same distribution
as the process $\xi(\theta_{\frac{t}{\varepsilon}}\omega)$, where $\eta^{\frac{1}{\varepsilon}}$ and
$\xi$ are defined in Lemma \ref{lemma1}.
\end{lemma}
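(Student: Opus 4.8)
The plan is to exploit the $\alpha$-self-similarity of the stable Lévy process, namely $\{L^\alpha_{ar}\}_{r\in\mathbb R}\overset{d}{=}\{a^{1/\alpha}L^\alpha_r\}_{r\in\mathbb R}$ for every $a>0$, together with a deterministic change of the time variable inside the stochastic integral. By Lemma~\ref{lemma1} the process in question is
\[
\eta^{\frac1\varepsilon}(\theta_t\omega)=\varepsilon^{-\frac1\alpha}\int_{-\infty}^{t}e^{\frac{F(t-s)}{\varepsilon}}\,dL^\alpha_s(\omega).
\]
First I would substitute $s=\varepsilon r$ (this is legitimate since $\varepsilon>0$, so the substitution is increasing, $r\mapsto L^\alpha_{\varepsilon r}$ is again a càdlàg semimartingale, the integrand is smooth and deterministic, and the tail integral at $-\infty$ still converges thanks to the exponential decay of $e^{Ft}$, $t\ge0$, and $\mathbb E\log(1+|L^\alpha_1|)<\infty$ noted in the Remark), and, using $\frac{F(t-\varepsilon r)}{\varepsilon}=F\!\left(\frac t\varepsilon-r\right)$ and linearity of the integral,
\[
\eta^{\frac1\varepsilon}(\theta_t\omega)=\varepsilon^{-\frac1\alpha}\int_{-\infty}^{t/\varepsilon}e^{F(t/\varepsilon-r)}\,dL^\alpha_{\varepsilon r}(\omega)=\int_{-\infty}^{t/\varepsilon}e^{F(t/\varepsilon-r)}\,d\big(\varepsilon^{-\frac1\alpha}L^\alpha_{\varepsilon r}(\omega)\big).
\]

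Next I would package both processes as a single measurable functional of a càdlàg driver: for a càdlàg path $w:\mathbb R\to\mathbb R^{n_2}$ set
\[
\Psi(w)(t):=\int_{-\infty}^{t/\varepsilon}e^{F(t/\varepsilon-r)}\,dw(r),\qquad t\in\mathbb R.
\]
The computation above says $t\mapsto\eta^{1/\varepsilon}(\theta_t\omega)=\Psi\big(\varepsilon^{-1/\alpha}L^\alpha_{\varepsilon\cdot}(\omega)\big)$, while directly from Lemma~\ref{lemma1} one has $t\mapsto\xi(\theta_{t/\varepsilon}\omega)=\int_{-\infty}^{t/\varepsilon}e^{F(t/\varepsilon-r)}\,dL^\alpha_r(\omega)=\Psi\big(L^\alpha_\cdot(\omega)\big)$. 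By self-similarity with $a=\varepsilon$, the driving paths $\varepsilon^{-1/\alpha}L^\alpha_{\varepsilon\cdot}$ and $L^\alpha_\cdot$ have the same law on the Skorokhod space $\mathcal D_0$; hence so do their images under $\Psi$, which is exactly the claim $\eta^{1/\varepsilon}(\theta_\cdot\omega)\overset{d}{=}\xi(\theta_{\cdot/\varepsilon}\omega)$.

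The hard part will be the rigorous status of this ``push-forward through $\Psi$'' step: one must check that $\Psi$ really is a measurable map on $\mathcal D_0$ — i.e.\ that the improper stochastic integral depends measurably (indeed continuously in a suitable sense) on the driving path — and that the deterministic substitution is valid for the integral as defined for semimartingale integrators. A safe way around this is to argue at the level of finite-dimensional distributions: for $t_1<\dots<t_k$ one computes the joint characteristic function of $\big(\eta^{1/\varepsilon}(\theta_{t_1}\omega),\dots,\eta^{1/\varepsilon}(\theta_{t_k}\omega)\big)$ from the Lévy–Khintchine exponent of the stable process (using that these are stationary semimartingales, cf.\ the Remark), performs the same change of variables $s=\varepsilon r$ inside the exponent so that the $\varepsilon$-factors cancel by the $\alpha$-homogeneity of the stable Lévy symbol, and finds it equal to the joint characteristic function of $\big(\xi(\theta_{t_1/\varepsilon}\omega),\dots,\xi(\theta_{t_k/\varepsilon}\omega)\big)$; since laws of càdlàg processes are determined by their finite-dimensional distributions, this yields the statement.
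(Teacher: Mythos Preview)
Your proposal is correct and follows essentially the same approach as the paper: the paper's proof also makes the deterministic substitution $s=\varepsilon u$ inside the integral and then invokes the self-similarity $L^\alpha_{\varepsilon u}\overset{d}{=}\varepsilon^{1/\alpha}L^\alpha_u$ to conclude. The paper's argument is a three-line computation that stops at the one-dimensional marginal level and does not discuss the measurability of the functional or the passage to full process law; your added care about packaging both sides as $\Psi$ applied to equidistributed drivers, and your fallback via finite-dimensional distributions and the L\'evy--Khintchine exponent, supply exactly the rigor that the paper leaves implicit.
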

\begin{proof}
From $\alpha$-stable process $L_{t}^{\alpha}$ are self-similar with Hurst index $\frac{1}{\alpha}$, i.e.,
\[
L_{ct}^{\alpha}\stackrel{d}{=}  c^{\frac{1}{\alpha}} L_{t}^{\alpha},
\]
where $``\stackrel{d}{=}``$ denotes equivalence (coincidence) in distribution, we have
\begin{align*}
\eta^{\frac{1}{\varepsilon}}(\theta_{t}\omega)&=\varepsilon^{-\frac{1}{\alpha}}\int_{-\infty}^{t} e^{\frac{F(t-s)}{\varepsilon}}dL_{s}^{\alpha}(\omega)=\int_{-\infty}^{\frac{t}{\varepsilon}}e^{F(\frac{t}{\varepsilon}-u)}\varepsilon^{-\frac{1}{\alpha}}dL_{\varepsilon u}^{\alpha}(\omega)\\
  &\stackrel{d}{=}\int_{-\infty}^{\frac{t}{\varepsilon}} e^{F(\frac{t}{\varepsilon}-u)}dL_{u}^{\alpha}(\omega)=\xi(\theta_{\frac{t}{\varepsilon}}\omega),
\end{align*}
which proves that $\eta^{\frac{1}{\varepsilon}}(\theta_{t}\omega)$ and $\xi(\theta_{\frac{t}{\varepsilon}}\omega)$ have the same distribution.
\end{proof}
\bigskip
Now we will transform the slow-fast stochastic dynamical system \eqref{3.2}
into a random dynamical system \cite{Ro08}. We introduce the random transformation
\begin{equation}\label{T}
\left(
  \begin{array}{ccc}
 \hat{ x}^{\varepsilon}\\
  \hat{y}^{\varepsilon}\\
  \end{array}
\right):=T^{\varepsilon}(\omega,x^{\varepsilon},y^{\varepsilon}):=
  \left(
  \begin{array}{ccc}
    &x^\varepsilon\\
    &y^\varepsilon-\sigma\eta^{\frac{1}{\varepsilon}}(\omega)\\
  \end{array}
\right).
\end{equation}
Then $(\hat{x}^\varepsilon(t), \hat{y}^\varepsilon(t))=T^{\varepsilon}(\theta_{t}\omega,x^{\varepsilon}(t),y^{\varepsilon}(t))$ satisfies
\begin{eqnarray}\label{3.6}
\left\{\begin{array}{l}
d\hat{x}^\varepsilon=S\hat{x}^\varepsilon dt+g_{1}(\hat{x}^\varepsilon,\hat{y}^\varepsilon+\sigma\eta^{\frac{1}{\varepsilon}}(\theta_{t}\omega))dt,\\
d\hat{y}^\varepsilon=\frac{1}{\varepsilon}F\hat{y}^\varepsilon dt+\frac{1}{\varepsilon}g_{2}(\hat{x}^\varepsilon,\hat{y}^\varepsilon+\sigma\eta^{\frac{1}{\varepsilon}}(\theta_{t}\omega))dt.
\end{array}\right.
\end{eqnarray}
This can be seen by a formal differentiation of $x^\varepsilon$ and $y^\varepsilon-\sigma\eta^{\frac{1}{\varepsilon}}(\omega)$.

For the sake of simplicity, we write $\hat{g}_{i}(\theta_{t}^{\varepsilon}\omega,\hat{x}^\varepsilon,\hat{y}^\varepsilon) = g_{i}(\hat{x}^\varepsilon,\hat{y}^\varepsilon+\sigma\eta^{\frac{1}{\varepsilon}}(\theta_{t}\omega)), i=1, 2.$ Since the additional term $\sigma\eta^{\frac{1}{\varepsilon}}$ doesn't change the Lipschitz constant of the functions on the right hand side, the functions $\hat{g}_{i}$
have the same Lipschitz constant as $g_{i}$.

By hypotheses $(A_{1})-(A_{3})$, system \eqref{3.6} can be solved for any $\omega$ contained in a $\theta$-invariant set $\Omega$ of full measure and for any initial condition $(\hat{x}^\varepsilon(0),\hat{y}^\varepsilon(0))=(x_{0},y_{0})$ such that the cocycle property is
satisfied. Then the solution mapping
\begin{equation}\label{sloution}
(t,\omega,(x_{0},y_{0}))\mapsto\hat{\phi}^\varepsilon(t,\omega,(x_{0},y_{0}))=(\hat{x}^\varepsilon(t,\omega,(x_{0},y_{0})),\hat{y}^\varepsilon(t,\omega,(x_{0},y_{0})))\in\mathbb{R}^{n},
\end{equation}
defines a random dynamical system. In fact, the mapping $\hat{\phi}^\varepsilon$ is
$(\mathcal{B}(\mathbb{R})\otimes\mathcal{F}\otimes\mathcal{B}(\mathbb{R}^{n}),\mathcal{B}(\mathbb{R}^{n}))$-measurable, and for each $\omega\in\Omega$, $\hat{\phi}^\varepsilon(\cdot,\omega):\mathbb{R}\times\mathbb{R}^{n}\rightarrow\mathbb{R}^{n}$
is a Carath\'eodory function.

In the following section we will show that system \eqref{3.6} generates a random dynamical system that has a random slow manifold for sufficiently
small $\varepsilon>0$.
Applying the ideas from the end of Section 2 with $T:=T^{\varepsilon}$ to the
solution of \eqref{3.6}, then system \eqref{3.2} also has a version satisfying the cocycle
property. Clearly,
\begin{align}\nonumber
\varphi^\varepsilon(t,\omega,(x_{0},y_{0}))&=(T^{\varepsilon})^{-1}\big(\theta_{t}\omega,\hat{\varphi}^{\varepsilon}(t,\omega,T^{\varepsilon}(\omega,(x_{0},y_{0})))\big)  \\\label{relationship}
                             &=\hat{\phi}^\varepsilon(t,\omega,(x_{0},y_{0}))+(0,\sigma\eta^{\frac{1}{\varepsilon}}(\theta_{t}\omega)),\ t\in\mathbb{R},\ \omega\in\Omega
\end{align}
is a random dynamical system generated by the original system  \eqref{3.2}. Hence, by the particular structure of $T_{\varepsilon}$ if \eqref{3.6} has a slow
manifold so has \eqref{3.2}.

\section{Random Slow Manifolds }\label{section4}

To study system \eqref{3.6}, for any $\beta\in\mathbb{R}$, we introduce Banach spaces of functions with a geometrically weighted $\sup$ norm \cite{Wa95} as follows:
\begin{eqnarray*}
   C_{\beta}^{s,-}&=&\{\phi^{s,-}:(-\infty,0]\to \mathbb{R}^{n_{1}} \;|\;\phi^{s,-}
\hbox{  is continuous and} \sup_{t\in (-\infty,0]}
|e^{-\beta t }\phi_{t}^{s,-}|< \infty\}, \\
   C_{\beta}^{s,+}&=&\{\phi^{s,+}:[0,\infty)\to \mathbb{R}^{n_{1}} \;|\;\phi^{s,+}
\hbox{  is continuous and }\sup_{t\in [0,\infty)}
|e^{-\beta t }\phi_{t}^{s,+}|< \infty\}
\end{eqnarray*}
with the norms
\[
||\phi^{s,-}||_{ C_{\beta}^{s,-}}:=\sup_{t\in (-\infty,0]}
|e^{-\beta t }\phi_{t}^{s,-}|   ~\mbox{and}~  ||\phi^{s,+}||_{ C_{\beta}^{s,+}} :=\sup_{t\in [0,\infty)}
|e^{-\beta t }\phi_{t}^{s,+}|
\]
Analogously, we define Banach spaces $C_{\beta}^{f,-}$ and $C_{\beta}^{f,+}$ with the norms
\[
||\phi^{f,-}||_{ C_{\beta}^{f,-}}:=\sup_{t\in (-\infty,0]}
|e^{-\beta t }\phi_{t}^{f,-}|  ~\mbox{and}~ ||\phi^{f,+}||_{ C_{\beta}^{f,+}} :=\sup_{t\in [0,\infty)}
|e^{-\beta t }\phi_{t}^{f,+}|.
\]
Let $ C_{\beta}^{\pm}$ be the product space $C_{\beta}^{\pm}:=C_{\beta}^{s,\pm}\times C_{\beta}^{f,\pm}$, $(\phi^{s,\pm},\phi^{f,\pm})\in C_{\beta}^{\pm}$.
$ C_{\beta}^{\pm}$ equipped with the norm
\[
||(\phi^{s,\pm},\phi^{f,\pm})||_{ C_{\beta}^{\pm}}:=||\phi^{s,\pm}||_{ C_{\beta}^{s,\pm}}+||\phi^{f,\pm}||_{ C_{\beta}^{f,\pm}}
\]                                                                                              %
is a Banach space.
\bigskip

Letting $\gamma>0$ satisfy $K<-(\gamma+\gamma_{f})$. For the remainder of the paper, we take $\beta=-\gamma/\varepsilon$ with $\varepsilon>0$ sufficiently small.

\begin{lemma}\label{slow}
Assume that $(A_{1})-(A_{3})$ hold. Then $(x_{0},y_{0})$ is in $\mathcal{M}^{\varepsilon}(\omega)$ if and only if there exists a function
$\hat{\phi}^{\varepsilon}(t)=(\hat{x}^{\varepsilon}(t),\hat{y}^{\varepsilon}(t))=\big(\hat{x}^{\varepsilon}(t,\omega,(x_{0},y_{0})),\hat{y}^{\varepsilon}(t,\omega,(x_{0},y_{0}))\big)\in C_{\beta}^{-}$ with $t\leq0$ such that
\begin{equation}\label{3.9}
\left(
  \begin{array}{ccc}
 \hat{ x}^{\varepsilon}(t)\\
  \hat{y}^{\varepsilon}(t)\\
  \end{array}
\right)
=  \left(
  \begin{array}{ccc}
    & e^{St}x_{0}+\int_{0}^{t}e^{S(t-s)}\hat{g}_{1}(\theta_{s}^{\varepsilon}\omega,\hat{x}^{\varepsilon}(s),\hat{y}^{\varepsilon}(s))ds\\[1ex]
    &\frac{1}{\varepsilon}\int_{-\infty}^{t}e^{\frac{F(t-s)}{\varepsilon}}\hat{g}_{2}(\theta_{s}^{\varepsilon}\omega,\hat{x}^{\varepsilon}(s),\hat{y}^{\varepsilon}(s))ds\\
  \end{array}
\right),
\end{equation}
where
\begin{equation}\label{3.8}
\mathcal{M}^{\varepsilon}(\omega)=\{(x_{0},y_{0})\in\mathbb{R}^{n}:\hat{\phi}^{\varepsilon}(\cdot,\omega,(x_{0},y_{0}))\in C_{\beta}^{-}\},~~ \omega\in\Omega.
\end{equation}
\end{lemma}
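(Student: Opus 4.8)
The plan is to prove the two implications of this Lyapunov--Perron characterization separately, each resting on the variation-of-constants (mild) form of \eqref{3.6} together with the weighted-norm bookkeeping built into the spaces $C_\beta^{\pm}$, recalling that $\beta=-\gamma/\varepsilon$ with $\gamma>0$ chosen so that $K<-(\gamma+\gamma_f)$, hence $\gamma_f+\gamma<0$.

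\textbf{Necessity.} Assume $(x_0,y_0)\in\mathcal{M}^\varepsilon(\omega)$, so the orbit $\hat\phi^\varepsilon(\cdot)=\hat\phi^\varepsilon(\cdot,\omega,(x_0,y_0))$ belongs to $C_\beta^-$. Since $(A_2)$ makes the drift of \eqref{3.6} globally Lipschitz, this orbit is the unique solution of \eqref{3.6} on $(-\infty,0]$, and therefore, for every $r\le t\le0$,
\begin{align*}
\hat x^\varepsilon(t)&=e^{St}x_0+\int_0^t e^{S(t-s)}\hat g_1(\theta_s^\varepsilon\omega,\hat x^\varepsilon(s),\hat y^\varepsilon(s))\,ds,\\
\hat y^\varepsilon(t)&=e^{\frac{F(t-r)}{\varepsilon}}\hat y^\varepsilon(r)+\frac1\varepsilon\int_r^t e^{\frac{F(t-s)}{\varepsilon}}\hat g_2(\theta_s^\varepsilon\omega,\hat x^\varepsilon(s),\hat y^\varepsilon(s))\,ds.
\end{align*}
The first identity is already the $x$-component of \eqref{3.9}; it is just variation of constants from time $0$ and needs no limiting argument. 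For the second I let $r\to-\infty$. By $(A_1)$ and $\hat y^\varepsilon\in C_\beta^{f,-}$,
\[
\big|e^{\frac{F(t-r)}{\varepsilon}}\hat y^\varepsilon(r)\big|\le e^{\frac{\gamma_f(t-r)}{\varepsilon}}|\hat y^\varepsilon(r)|\le \|\hat y^\varepsilon\|_{C_\beta^{f,-}}\,e^{\frac{\gamma_f t}{\varepsilon}}\,e^{-\frac{(\gamma_f+\gamma)r}{\varepsilon}}\longrightarrow 0,
\]
because $\gamma_f+\gamma<0$. For the integral, $(A_2)$ and $g_2(0,0)=0$ give
\[
|\hat g_2(\theta_s^\varepsilon\omega,\hat x^\varepsilon(s),\hat y^\varepsilon(s))|=|g_2(\hat x^\varepsilon(s),\hat y^\varepsilon(s)+\sigma\eta^{\frac{1}{\varepsilon}}(\theta_s\omega))|\le K\big(|\hat x^\varepsilon(s)|+|\hat y^\varepsilon(s)|+\sigma|\eta^{\frac{1}{\varepsilon}}(\theta_s\omega)|\big),
\]
where the first two summands are $O(e^{-\gamma s/\varepsilon})$ since $\hat\phi^\varepsilon\in C_\beta^-$, and $|\eta^{\frac{1}{\varepsilon}}(\theta_s\omega)|$ grows subexponentially by temperedness of the generalised Ornstein--Uhlenbeck process (cf.\ the Remark, via $E\log(1+|L_1^\alpha|)<\infty$). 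Multiplying by $e^{\gamma_f(t-s)/\varepsilon}$ and using $\gamma_f+\gamma<0$ once more, the integrand is absolutely integrable on $(-\infty,t]$, so the limit exists and yields exactly the $y$-component of \eqref{3.9}.

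\textbf{Sufficiency.} Conversely, let $\hat\phi^\varepsilon=(\hat x^\varepsilon,\hat y^\varepsilon)\in C_\beta^-$ satisfy \eqref{3.9}. Evaluating at $t=0$ gives $\hat x^\varepsilon(0)=x_0$ and $\hat y^\varepsilon(0)=\frac1\varepsilon\int_{-\infty}^0 e^{-Fs/\varepsilon}\hat g_2(\theta_s^\varepsilon\omega,\hat x^\varepsilon(s),\hat y^\varepsilon(s))\,ds=:y_0$, so $\hat\phi^\varepsilon(0)=(x_0,y_0)$. Differentiating \eqref{3.9} in $t$ — legitimate since the integrands are continuous off a countable set (the $\eta$-term being c\`{a}dl\`{a}g), the $y$-integral producing the boundary term $\tfrac1\varepsilon\hat g_2$ plus $\tfrac1\varepsilon F\hat y^\varepsilon$ and the lower limit $-\infty$ contributing nothing thanks to the decay just established — shows $\hat\phi^\varepsilon$ solves \eqref{3.6} on $(-\infty,0]$ with this initial value. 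By uniqueness (again $(A_2)$), $\hat\phi^\varepsilon(t)=\hat\phi^\varepsilon(t,\omega,(x_0,y_0))$ for all $t\le0$, and since $\hat\phi^\varepsilon\in C_\beta^-$ by hypothesis, the definition \eqref{3.8} yields $(x_0,y_0)\in\mathcal{M}^\varepsilon(\omega)$.

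\textbf{Main obstacle.} The only substantive point is in the necessity direction: justifying convergence of the improper integral over $(-\infty,t]$ and the vanishing of $e^{F(t-r)/\varepsilon}\hat y^\varepsilon(r)$ as $r\to-\infty$. Both rely on the quantitative gap $\gamma+\gamma_f<-K<0$ between the contraction rate $-\gamma_f/\varepsilon$ of $e^{Ft/\varepsilon}$ and the backward growth $\gamma/\varepsilon$ admissible in $C_\beta^-$, together with the temperedness bound on $\sigma\eta^{\frac{1}{\varepsilon}}(\theta_s\omega)$ appearing inside $\hat g_2$; the remainder is routine variation of constants and uniqueness.
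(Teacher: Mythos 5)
Your proof is correct and follows essentially the same route as the paper's: variation of constants for both components, then sending the lower limit to $-\infty$ in the $y$-equation and killing the boundary term $e^{F(t-r)/\varepsilon}\hat y^\varepsilon(r)$ via the weighted bound and the gap $\gamma+\gamma_f<0$. You supply slightly more detail than the paper does on two points — the absolute convergence of the improper integral (via temperedness of $\eta^{1/\varepsilon}$) and the sufficiency direction, where the paper simply invokes the definition \eqref{3.8} by identifying the solution notationally, while you explicitly differentiate \eqref{3.9} and appeal to uniqueness — but these are elaborations, not a different argument.
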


\begin{proof}
 If $(x_{0},y_{0})\in \mathcal{M}^{\varepsilon}(\omega)$, by method of constant variation, system \eqref{3.6} is
equivalent to the system of integral equations
 \begin{eqnarray}\label{3.10}
 \left\{\begin{array}{l}
    \hat{x}^{\varepsilon}(t)=e^{St}x_{0}+\int_{0}^{t}e^{S(t-s)}\hat{g}_{1}(\theta_{s}^{\varepsilon}\omega,\hat{x}^{\varepsilon}(s),\hat{y}^{\varepsilon}(s))ds,
    \\[1mm]
    \hat{y}^{\varepsilon}(t)=e^{\frac{F(t-u)}{\varepsilon}}\hat{y}^{\varepsilon}(u)+
    \frac{1}{\varepsilon}\int_{u}^{t}e^{\frac{F(t-s)}{\varepsilon}}\hat{g}_{2}(\theta_{s}^{\varepsilon}\omega,\hat{x}^{\varepsilon}(s),\hat{y}^{\varepsilon}(s))ds
 \end{array}\right.
 \end{eqnarray}
 and $\hat{\phi}^{\varepsilon}(t)\in C_{\beta}^{-}$.
 Moreover, by $u<0$ and $-(\gamma+\gamma_{f})>K>0$, we have
 \begin{equation}\nonumber\begin{array}{l}
|e^{\frac{F(t-u)}{\varepsilon}}\hat{y}^{\varepsilon}(u)|\leq e^{\frac{\gamma_{f}(t-u)}{\varepsilon}}|\hat{y}^{\varepsilon}(u)|
   \leq\sup\limits_{u\in (-\infty,0]}\{e^{-\beta u}|\hat{y}^{\varepsilon}(u)|\}e^{\frac{\gamma_{f}(t-u)}{\varepsilon}}e^{\beta u}\\[2ex]
   =||\hat{y}^{\varepsilon}||_{C_{\beta}^{f,-}}e^{\frac{\gamma_{f}}{\varepsilon}t}e^{(\frac{\gamma_{f}}{\varepsilon}-\beta)(-u)}
   =||\hat{y}^{\varepsilon}||_{C_{\beta}^{f,-}}e^{\frac{\gamma_{f}}{\varepsilon}t}e^{-\frac{\gamma+\gamma_{f}}{\varepsilon}u}
   \to0,\ \text{as}\ u\to-\infty,
\end{array}
\end{equation}
which leads to
\begin{equation}\label{3.12}
\hat{y}^{\varepsilon}(t)=\frac{1}{\varepsilon}\int_{-\infty}^{t}e^{\frac{F(t-s)}{\varepsilon}}\hat{g}_{2}(\theta_{s}^{\varepsilon}\omega,\hat{x}^{\varepsilon}(s),\hat{y}^{\varepsilon}(s))ds.
\end{equation}
Thus \eqref{3.10}-\eqref{3.12} imply that \eqref{3.9} holds.\\
Conversely, let $\hat{\phi}^{\varepsilon}(t,\omega,(x_{0},y_{0}))\in C_{\beta}^{-}$ satisfying \eqref{3.9}, then $(x_{0},y_{0})$ is in $\mathcal{M}^{\varepsilon}(\omega)$ by \eqref{3.8}. Thus, we have finished the proof.
\end{proof}

\begin{lemma}\label{lemma3}
Assume $(A_{1})-(A_{3})$ to be valid. Letting $(\hat{x}^{\varepsilon}(0),\hat{y}^{\varepsilon}(0))=(x_{0},y_{0})$, if there exists an $\delta$ such that $\varepsilon\in(0,\delta)$, the system \eqref{3.9} will have a unique solution $\hat{\phi}^{\varepsilon}(t)=(\hat{x}^{\varepsilon}(t),\hat{y}^{\varepsilon}(t))=\big(\hat{x}^{\varepsilon}(t,\omega,(x_{0},y_{0})),\hat{y}^{\varepsilon}(t,\omega,(x_{0},y_{0}))\big)$ in $C_{\beta}^{-}$.
\end{lemma}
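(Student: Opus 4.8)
The plan is to realize the solution of the integral system \eqref{3.9} as the unique fixed point of a Lyapunov--Perron type operator on the Banach space $C_\beta^{-}$ and to turn that operator into a contraction by choosing $\varepsilon$ small. For $\hat\phi=(\hat x^\varepsilon,\hat y^\varepsilon)\in C_\beta^{-}$ set $\mathcal{J}^\varepsilon\hat\phi=(\mathcal{J}_1^\varepsilon\hat\phi,\mathcal{J}_2^\varepsilon\hat\phi)$ with
\begin{align*}
\mathcal{J}_1^\varepsilon\hat\phi(t)&=e^{St}x_0+\int_0^t e^{S(t-s)}\hat g_1(\theta_s^\varepsilon\omega,\hat x^\varepsilon(s),\hat y^\varepsilon(s))\,ds,\\
\mathcal{J}_2^\varepsilon\hat\phi(t)&=\frac1\varepsilon\int_{-\infty}^t e^{\frac{F(t-s)}{\varepsilon}}\hat g_2(\theta_s^\varepsilon\omega,\hat x^\varepsilon(s),\hat y^\varepsilon(s))\,ds .
\end{align*}
By construction a fixed point of $\mathcal{J}^\varepsilon$ in $C_\beta^{-}$ is exactly a solution of \eqref{3.9} (with $\hat x^\varepsilon(0)=x_0$, the value $\hat y^\varepsilon(0)$ being then determined and playing the role of $y_0$).

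First I would verify that $\mathcal{J}^\varepsilon$ maps $C_\beta^{-}$ into itself. Continuity in $t$ of both components is clear, since the integrands are locally bounded. For the weighted bounds, decompose $\hat g_i(\theta_s^\varepsilon\omega,\hat x^\varepsilon(s),\hat y^\varepsilon(s))=\big[\hat g_i(\theta_s^\varepsilon\omega,\hat x^\varepsilon(s),\hat y^\varepsilon(s))-\hat g_i(\theta_s^\varepsilon\omega,0,0)\big]+g_i(0,\sigma\eta^{\frac1\varepsilon}(\theta_s\omega))$. The bracketed term is bounded by $K(|\hat x^\varepsilon(s)|+|\hat y^\varepsilon(s)|)\le Ke^{\beta s}\|\hat\phi\|_{C_\beta^{-}}$ by $(A_2)$; inserting the estimates of $(A_1)$ and using $\beta=-\gamma/\varepsilon$ together with $K<-(\gamma+\gamma_f)$, the resulting time integrals converge and are bounded uniformly in $t\le0$. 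The remaining inhomogeneous term is bounded by $K\sigma|\eta^{\frac1\varepsilon}(\theta_s\omega)|$, and here I would use the sub-exponential growth of the stationary generalised Ornstein--Uhlenbeck process (valid on a $\theta$-invariant set of full measure thanks to $\mathbb{E}\log(1+|L_1^\alpha|)<\infty$, cf. the Remark) to ensure that $\frac1\varepsilon\int_{-\infty}^t e^{F(t-s)/\varepsilon}|\eta^{\frac1\varepsilon}(\theta_s\omega)|\,ds$ is finite. Hence $\mathcal{J}^\varepsilon\hat\phi\in C_\beta^{-}$.

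Next comes the contraction estimate. For $\hat\phi_1,\hat\phi_2\in C_\beta^{-}$ the inhomogeneous parts cancel, so $(A_1)$--$(A_2)$ give
\begin{align*}
\|\mathcal{J}_1^\varepsilon\hat\phi_1-\mathcal{J}_1^\varepsilon\hat\phi_2\|_{C_\beta^{-}}&\le\frac{K\varepsilon}{\varepsilon\gamma_s+\gamma}\,\|\hat\phi_1-\hat\phi_2\|_{C_\beta^{-}},\\
\|\mathcal{J}_2^\varepsilon\hat\phi_1-\mathcal{J}_2^\varepsilon\hat\phi_2\|_{C_\beta^{-}}&\le\frac{K}{-(\gamma+\gamma_f)}\,\|\hat\phi_1-\hat\phi_2\|_{C_\beta^{-}},
\end{align*}
the slow bound coming from $\sup_{t\le0}\int_t^0 e^{(\gamma_s+\gamma/\varepsilon)(t-s)}\,ds\le\varepsilon/(\varepsilon\gamma_s+\gamma)$, and the fast bound from the crucial identity $\frac1\varepsilon\int_{-\infty}^t e^{(\gamma_f+\gamma)(t-s)/\varepsilon}\,ds=1/\!\left(-(\gamma+\gamma_f)\right)$, in which the prefactor $1/\varepsilon$ is exactly absorbed. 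Thus the fast contribution is $\varepsilon$-independent and strictly below $1$ by virtue of $(A_3)$ (more precisely of $K<-(\gamma+\gamma_f)$), while the slow contribution tends to $0$ as $\varepsilon\to0$. Consequently there is $\delta>0$ such that for every $\varepsilon\in(0,\delta)$ the Lipschitz constant $\frac{K}{-(\gamma+\gamma_f)}+\frac{K\varepsilon}{\varepsilon\gamma_s+\gamma}$ of $\mathcal{J}^\varepsilon$ on $C_\beta^{-}$ is $<1$, and the Banach fixed point theorem yields the unique $\hat\phi^\varepsilon\in C_\beta^{-}$ solving \eqref{3.9}.

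The step I expect to be the real obstacle is the mapping-into-itself property, specifically the inhomogeneous term carrying $\eta^{\frac1\varepsilon}(\theta_s\omega)$: one must pin down the almost-sure growth rate of the stationary L\'evy--Ornstein--Uhlenbeck process and check that it is genuinely slower than the exponential rate $\gamma/\varepsilon$ built into the weight of $C_\beta^{-}$, so that the defining improper integral converges on a full-measure $\theta$-invariant set. Once this is settled, the remainder is the routine weighted-norm contraction computation sketched above.
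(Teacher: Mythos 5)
Your proposal is correct and follows essentially the same Lyapunov--Perron contraction argument as the paper: the same operator, the same weighted-norm estimates yielding the fast constant $-K/(\gamma+\gamma_f)$ and the slow constant $\varepsilon K/(\gamma+\varepsilon\gamma_s)$, and the Banach fixed point theorem once their sum is forced below one. The only noticeable difference is that you make explicit the need to control the sub-exponential a.s.\ growth of $\eta^{1/\varepsilon}(\theta_s\omega)$ to justify convergence of the inhomogeneous integrals, a point the paper quietly absorbs into the constants $C_1, C_2, C_3$.
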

\begin{proof}
For any $\hat{\phi}^{\varepsilon}=(\hat{x}^{\varepsilon},\hat{y}^{\varepsilon})\in C_{\beta}^{-}$, define two operators $\mathcal{I}_{s}^{\varepsilon}:C_{\beta}^{-}\rightarrow C_{\beta}^{s,-}$ and $\mathcal{I}_{f}^{\varepsilon}:C_{\beta}^{-}\rightarrow C_{\beta}^{f,-}$ satisfying
\begin{eqnarray*}
\begin{array}{l}
  \mathcal{I}_{s}^{\varepsilon}(\hat{\phi}^{\varepsilon})[t]=e^{St}x_{0}+\int_{0}^{t}e^{S(t-s)}\hat{g}_{1}(\theta_{s}^{\varepsilon}\omega,\hat{x}^{\varepsilon}(s),
  \hat{y}^{\varepsilon}(s))ds,\\[1mm]
  \mathcal{I}_{f}^{\varepsilon}(\hat{\phi}^{\varepsilon})[t]=\frac{1}{\varepsilon}\int_{-\infty}^{t}e^{\frac{F(t-s)}
  {\varepsilon}}\hat{g}_{2}(\theta_{s}^{\varepsilon}\omega,\hat{x}^{\varepsilon}(s),\hat{y}^{\varepsilon}(s))ds,
  \end{array}
\end{eqnarray*}
and the Lyapunov-Perron transform $\mathcal{I}^{\varepsilon}$ given by
\begin{equation}\label{3.15}
\mathcal{I}^{\varepsilon}(\hat{\phi}^{\varepsilon})[t]
= \left(
  \begin{array}{ccc}
  \mathcal{I}_{s}^{\varepsilon}(\hat{\phi}^{\varepsilon})[t]\\
  \mathcal{I}_{f}^{\varepsilon}(\hat{\phi}^{\varepsilon})[t]\\
  \end{array}
\right).
\end{equation}
Under our assumptions above, $\mathcal{I}^{\varepsilon}$ maps $C_{\beta}^{-}$ into itself. Taking $\hat{\phi}^{\varepsilon}=(\hat{x}^{\varepsilon},\hat{y}^{\varepsilon})\in C_{\beta}^{-}$, then
\begin{align*}
||\mathcal{I}_{s}^{\varepsilon}(\hat{\phi}^{\varepsilon})||_{C_{\beta}^{s,-}}&\leq K\sup_{t\in (-\infty,0]}\Big\{e^{-\beta t}\int_{t}^{0}e^{\gamma_{s}(t-s)}(|\hat{x}^{\varepsilon}(s)|+|\hat{y}^{\varepsilon}(s)+\sigma\eta^{\frac{1}{\varepsilon}}(\theta_{s}\omega)|)ds\Big\}\\
   &\ +\sup_{t\in (-\infty,0]}\{e^{-\beta t}e^{\gamma_{s}t}|x_{0}|\}\\
   &\leq K\sup_{t\in (-\infty,0]}\Big\{\int_{t}^{0}e^{\frac{\gamma+\varepsilon\gamma_{s}}{\varepsilon}(t-s)}ds\Big\}||\hat{\phi}^{\varepsilon}||_{C_{\beta}^{-}}+C_{1}\\
   &\leq\frac{\varepsilon K}{\gamma+\varepsilon\gamma_{s}}||\hat{\phi}^{\varepsilon}||_{C_{\beta}^{-}}+C_{1}
\end{align*}
and
\begin{align*}
||\mathcal{I}_{f}^{\varepsilon}(\hat{\phi}^{\varepsilon})||_{C_{\beta}^{f,-}}&\leq \frac{K}{\varepsilon}\sup_{t\in (-\infty,0]}\Big\{e^{-\beta t}\int_{-\infty}^{t}e^{\frac{\gamma_{f}(t-s)}{\varepsilon}}(|\hat{x}^{\varepsilon}(s)|+|\hat{y}^{\varepsilon}(s)+\sigma\eta^{\frac{1}{\varepsilon}}(\theta_{s}\omega)|)ds\Big\}\\
   &\leq \frac{K}{\varepsilon}\sup_{t\in (-\infty,0]}\Big\{\int_{-\infty}^{t}e^{\frac{\gamma+\gamma_{f}}{\varepsilon}(t-s)}ds\Big\}||\hat{\phi}^{\varepsilon}||_{C_{\beta}^{-}}+C_{2}\\
   &=-\frac{K}{\gamma+\gamma_{f}}||\hat{\phi}^{\varepsilon}||_{C_{\beta}^{-}}+C_{2}.
\end{align*}
Hence, by the definition of $\mathcal{I}^{\varepsilon}$ we obtain
\begin{equation*}
||\mathcal{I}^{\varepsilon}(\hat{\phi}^{\varepsilon})||_{C_{\beta}^{-}}\leq\rho(\varepsilon)||\hat{\phi}^{\varepsilon}||_{C_{\beta}^{-}}+C_{3},
\end{equation*}
where $C_{i}, i=1,2,3$ are constants and
\begin{eqnarray}\label{rho}
\rho(\varepsilon):=\frac{\varepsilon K}{\gamma+\varepsilon\gamma_{s}}- \frac{K}{\gamma+\gamma_{f}}.
\end{eqnarray}

Further, we will show that $\mathcal{I}^{\varepsilon}$ is a contraction.
Let $\hat{\phi}_{1}^{\varepsilon}=(\hat{x}_{1}^{\varepsilon},\hat{y}_{1}^{\varepsilon}), \hat{\phi}_{2}^{\varepsilon}=(\hat{x}_{2}^{\varepsilon},\hat{y}_{2}^{\varepsilon})\in C_{\beta}^{-}$. Using $(A_{1})-(A_{2})$ and the definition of
$C_{\beta}^{-}$, we obtain
{\small\begin{align}\nonumber
||\mathcal{I}_{s}^{\varepsilon}(\hat{\phi}_{1}^{\varepsilon})-\mathcal{I}_{s}^{\varepsilon}(\hat{\phi}_{2}^{\varepsilon})||_{C_{\beta}^{s,-}}&\leq K\sup_{t\in (-\infty,0]}\left\{e^{-\beta t}\int_{t}^{0}e^{\gamma_{s}(t-s)}(|\hat{x}_{1}^{\varepsilon}(s)-\hat{x}_{2}^{\varepsilon}(s)|+|\hat{y}_{1}^{\varepsilon}(s)-\hat{y}_{2}^{\varepsilon}(s)|)ds\right\}\\ \nonumber
   &\leq K\sup_{t\in (-\infty,0]}\left\{\int_{t}^{0}e^{\frac{\gamma+\varepsilon\gamma_{s}}{\varepsilon}(t-s)}ds\right\}||\hat{\phi}_{1}^{\varepsilon}-\hat{\phi}_{2}^{\varepsilon}||_{C_{\beta}^{-}}\\ \label{3.16}
   &\leq\frac{\varepsilon K}{\gamma+\varepsilon\gamma_{s}}||\hat{\phi}_{1}^{\varepsilon}-\hat{\phi}_{2}^{\varepsilon}||_{C_{\beta}^{-}}
\end{align}}
and
{\small\begin{align}\nonumber
||\mathcal{I}_{f}^{\varepsilon}(\hat{\phi}_{1}^{\varepsilon})-\mathcal{I}_{f}^{\varepsilon}(\hat{\phi}_{2}^{\varepsilon})||_{C_{\beta}^{f,-}}&\leq \frac{K}{\varepsilon}\sup_{t\in (-\infty,0]}\left\{e^{-\beta t}\int_{-\infty}^{t}e^{\frac{\gamma_{f}(t-s)}{\varepsilon}}(|\hat{x}_{1}^{\varepsilon}(s)-\hat{x}_{2}^{\varepsilon}(s)|+|\hat{y}_{1}^{\varepsilon}(s)-\hat{y}_{2}^{\varepsilon}(s)|)ds\right \} \\ \nonumber
   &\leq \frac{K}{\varepsilon}\sup_{t\in (-\infty,0]}\left\{\int_{-\infty}^{t}e^{\frac{\gamma+\gamma_{f}}{\varepsilon}(t-s)}ds\right\}||
   \hat{\phi}_{1}^{\varepsilon}-\hat{\phi}_{2}^{\varepsilon}||_{C_{\beta}^{-}}\\ \label{3.17}
   &=-\frac{K}{\gamma+\gamma_{f}}||\hat{\phi}_{1}^{\varepsilon}-\hat{\phi}_{2}^{\varepsilon}||_{C_{\beta}^{-}}.
\end{align}}
By \eqref{3.16} and \eqref{3.17}, we have that
\begin{equation*}
||\mathcal{I}^{\varepsilon}(\hat{\phi}_{1}^{\varepsilon})-\mathcal{I}^{\varepsilon}(\hat{\phi}_{2}^{\varepsilon})||_{C_{\beta}^{-}}
\leq\rho(\varepsilon)||\hat{\phi}_{1}^{\varepsilon}-\hat{\phi}_{2}^{\varepsilon}||_{C_{\beta}^{-}},
\end{equation*}
By \eqref{rho} and hypothesis ($A_{3}$), we have
\begin{equation*}
0<\rho(0)=-\frac{K}{\gamma+\gamma_{f}}<1,\ \ \rho(\varepsilon)=\frac{\gamma K}{(\gamma+\varepsilon\gamma_{s})^{2}}.
\end{equation*}
Then there is a sufficiently small constant $\delta>0$ and a constant $\rho_{0}\in(0,1)$, such that
\begin{equation*}
0<\rho(\varepsilon)\leq\rho_{0}<1\ ~\mbox{for}~\ \varepsilon\in(0,\delta),
\end{equation*}
which implies that $\mathcal{I}^{\varepsilon}$ is strictly contractive.
Let $\hat{\phi}^{\varepsilon}(t)\in C_{\beta}^{-}$ be the unique fixed point, i.e.,
the system \eqref{3.9} has a unique solution $\hat{\phi}^{\varepsilon}(t)$.
\end{proof}
\bigskip
In what follows we investigate the dependence of the fixed point $\hat{\phi}^{\varepsilon}(t)$
of the operator $\mathcal{I}^{\varepsilon}$ on the intial point.
\begin{lemma}\label{lemma4}
Assume the hypotheses of Lemma \ref{lemma3} to be valid. Then for any $(x_{0},y_{0}),(x_{0}^{'},y_{0}^{'})\in\mathbb{R}^{n}$, there is an $\delta>0$ such that if $\varepsilon\in(0,\delta)$, we have
\begin{equation}\label{3.18}
||\hat{\phi}^{\varepsilon}(t,\omega,(x_{0},y_{0}))-\hat{\phi}^{\varepsilon}(t,\omega,(x_{0}^{'},y_{0}^{'}))||_{C_{\beta}^{-}}\leq\frac{|x_{0}-x_{0}^{'}|}{1-\rho(\varepsilon)},
\end{equation}
where $\rho(\varepsilon)$ is defined as \eqref{rho}.
\end{lemma}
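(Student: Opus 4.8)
The plan is to use the standard uniform-contraction-with-parameter argument, exploiting that the Lyapunov--Perron operator $\mathcal{I}^{\varepsilon}$ of Lemma \ref{lemma3} depends on the initial datum only through the single term $e^{St}x_{0}$ in its slow component $\mathcal{I}_{s}^{\varepsilon}$, and in particular does not depend on $y_{0}$ at all. To make this transparent I would write $\mathcal{I}^{\varepsilon}_{x_{0}}$ for the operator built from the initial slow value $x_{0}$, so that by Lemma \ref{lemma3} the two fixed points satisfy $\hat{\phi}_{1}^{\varepsilon}:=\hat{\phi}^{\varepsilon}(\cdot,\omega,(x_{0},y_{0}))=\mathcal{I}^{\varepsilon}_{x_{0}}(\hat{\phi}_{1}^{\varepsilon})$ and $\hat{\phi}_{2}^{\varepsilon}:=\hat{\phi}^{\varepsilon}(\cdot,\omega,(x_{0}^{'},y_{0}^{'}))=\mathcal{I}^{\varepsilon}_{x_{0}^{'}}(\hat{\phi}_{2}^{\varepsilon})$.

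First I would split, by the triangle inequality,
\begin{align*}
\|\hat{\phi}_{1}^{\varepsilon}-\hat{\phi}_{2}^{\varepsilon}\|_{C_{\beta}^{-}}
&\leq \|\mathcal{I}^{\varepsilon}_{x_{0}}(\hat{\phi}_{1}^{\varepsilon})-\mathcal{I}^{\varepsilon}_{x_{0}}(\hat{\phi}_{2}^{\varepsilon})\|_{C_{\beta}^{-}}
   +\|\mathcal{I}^{\varepsilon}_{x_{0}}(\hat{\phi}_{2}^{\varepsilon})-\mathcal{I}^{\varepsilon}_{x_{0}^{'}}(\hat{\phi}_{2}^{\varepsilon})\|_{C_{\beta}^{-}}.
\end{align*}
The first term is bounded by $\rho(\varepsilon)\|\hat{\phi}_{1}^{\varepsilon}-\hat{\phi}_{2}^{\varepsilon}\|_{C_{\beta}^{-}}$ directly from the contraction estimates \eqref{3.16}--\eqref{3.17} (there the $e^{St}x_{0}$ terms cancel since the initial slow value is the same). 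For the second term, $\mathcal{I}^{\varepsilon}_{x_{0}}(\hat{\phi}_{2}^{\varepsilon})$ and $\mathcal{I}^{\varepsilon}_{x_{0}^{'}}(\hat{\phi}_{2}^{\varepsilon})$ have identical fast components and their slow components differ only by $e^{St}(x_{0}-x_{0}^{'})$, so using $(A_{1})$ and $\beta=-\gamma/\varepsilon<0$,
\[
\|\mathcal{I}^{\varepsilon}_{x_{0}}(\hat{\phi}_{2}^{\varepsilon})-\mathcal{I}^{\varepsilon}_{x_{0}^{'}}(\hat{\phi}_{2}^{\varepsilon})\|_{C_{\beta}^{-}}
=\sup_{t\in(-\infty,0]}\big|e^{-\beta t}e^{St}(x_{0}-x_{0}^{'})\big|
\leq \sup_{t\in(-\infty,0]} e^{(\frac{\gamma}{\varepsilon}+\gamma_{s})t}\,|x_{0}-x_{0}^{'}|
= |x_{0}-x_{0}^{'}|,
\]
the supremum being attained at $t=0$ because $\tfrac{\gamma}{\varepsilon}+\gamma_{s}>0$.

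Combining the two estimates gives $\|\hat{\phi}_{1}^{\varepsilon}-\hat{\phi}_{2}^{\varepsilon}\|_{C_{\beta}^{-}}\leq\rho(\varepsilon)\|\hat{\phi}_{1}^{\varepsilon}-\hat{\phi}_{2}^{\varepsilon}\|_{C_{\beta}^{-}}+|x_{0}-x_{0}^{'}|$, and since $0<\rho(\varepsilon)\leq\rho_{0}<1$ for $\varepsilon\in(0,\delta)$ (with $\delta$ the constant from Lemma \ref{lemma3}) we may rearrange to obtain \eqref{3.18}.

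I do not expect any real obstacle here; the only point requiring care is the bookkeeping of how $\mathcal{I}^{\varepsilon}$ depends on the initial condition — namely, only affinely through $e^{St}x_{0}$ and not at all through $y_{0}$ — which is precisely what makes the right-hand side of \eqref{3.18} involve $|x_{0}-x_{0}^{'}|$ alone rather than the full distance between the two initial points.
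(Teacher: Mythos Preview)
Your proposal is correct and follows essentially the same approach as the paper's proof. The only cosmetic difference is that you organize the estimate via a triangle-inequality split into a ``contraction part'' and a ``parameter part'' (making explicit that $\mathcal{I}^{\varepsilon}$ depends on the initial data only through $e^{St}x_{0}$), whereas the paper writes out the difference of the integral expressions directly and arrives at the same inequality $\|\hat{\phi}_{1}^{\varepsilon}-\hat{\phi}_{2}^{\varepsilon}\|_{C_{\beta}^{-}}\leq |x_{0}-x_{0}^{'}|+\rho(\varepsilon)\|\hat{\phi}_{1}^{\varepsilon}-\hat{\phi}_{2}^{\varepsilon}\|_{C_{\beta}^{-}}$ before rearranging.
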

\begin{proof}
Taking any $(x_{0},y_{0}),(x_{0}^{'},y_{0}^{'})\in\mathbb{R}^{n}$, for simplicity we write $\hat{\phi}^{\varepsilon}(t,\omega,x_{0})$, $\hat{\phi}^{\varepsilon}(t,\omega,x_{0}^{'})$ instead of $\hat{\phi}^{\varepsilon}(t,\omega,(x_{0},y_{0}))$, $\hat{\phi}^{\varepsilon}(t,\omega,(x_{0}^{'},y_{0}^{'}))$ in the following estimate, respectively. For every $\omega\in\Omega$, we have
$$\begin{array}{rl}
&||\hat{\phi}^{\varepsilon}(t,\omega,x_{0})-\hat{\phi}^{\varepsilon}(t,\omega,x_{0}^{'})||_{C_{\beta}^{-}}\\[1ex]
\leq&\displaystyle\Big|\Big|e^{St}(x_{0}-x_{0}^{'})+\int_{0}^{t}e^{S(t-s)}\Delta \hat{g}_{1}(\theta_{s}^{\varepsilon}\omega,\hat{x}^{\varepsilon}(s),\hat{y}^{\varepsilon}(s))ds\Big|\Big|_{C_{\beta}^{s,-}}\\[1ex]
   &\displaystyle +\Big|\Big|\frac{1}{\varepsilon}\int_{-\infty}^{t}e^{\frac{F(t-s)}{\varepsilon}}\Delta \hat{g}_{2}(\theta_{s}^{\varepsilon}\omega,\hat{x}^{\varepsilon}(s),\hat{y}^{\varepsilon}(s))ds\Big|\Big|_{C_{\beta}^{f,-}}\\[1ex]
\leq&\displaystyle |x_{0}-x_{0}^{'}|+\frac{\varepsilon K}{\gamma+\varepsilon\gamma_{s}}||\hat{\phi}^{\varepsilon}(t,\omega,x_{0})-\hat{\phi}^{\varepsilon}(t,\omega,x_{0}^{'})||_{C_{\beta}^{-}}\\[2ex]
   &\displaystyle -\frac{K}{\gamma+\gamma_{f}}||\hat{\phi}^{\varepsilon}(t,\omega,x_{0})-\hat{\phi}^{\varepsilon}(t,\omega,x_{0}^{'})||_{C_{\beta}^{-}}\\[2ex]
=&|x_{0}-x_{0}^{'}|+\rho(\varepsilon)||\hat{\phi}^{\varepsilon}(t,\omega,x_{0})-\hat{\phi}^{\varepsilon}(t,\omega,x_{0}^{'})||_{C_{\beta}^{-}},
\end{array}$$
where
$$\begin{array}{rl}
  \Delta \hat{g}_{i}=\hat{g}_{i}\big(\theta_{s}^{\varepsilon}\omega,\hat{x}^{\varepsilon}(s,\omega,x_{0}),\hat{y}^{\varepsilon}(s,\omega,x_{0})\big)-\hat{g}_{i}\big(\theta_{s}^{\varepsilon}\omega,\hat{x}^{\varepsilon}(s,\omega,x_{0}^{'}),\hat{y}^{\varepsilon}(s,\omega,x_{0}^{'})\big),\ i=1, 2.
\end{array}$$
Then we obtain
\begin{equation*}
||\hat{\phi}^{\varepsilon}(t,\omega,x_{0})-\hat{\phi}^{\varepsilon}(t,\omega,x_{0}^{'})||_{C_{\beta}^{-}}\leq\frac{1}{1-\rho(\varepsilon)}|x_{0}-x_{0}^{'}|,
\end{equation*}
which completes the proof.
\end{proof}

\medskip
By Lemma \ref{slow}, Lemma \ref{lemma3} and Lemma \ref{lemma4}, we can construct the slow manifold as a random graph.

\begin{theorem}\label{slowmanifold}
Assume that $(A_{1})-(A_{3})$ hold and that $\varepsilon$ is sufficiently small.
Then the system \eqref{3.6} has a invariant manifold
$\mathcal{M}^{\varepsilon}(\omega)=\{(x_{0},\hat{h}^{\varepsilon}(\omega,x_{0})): x_{0}\in\mathbb{R}^{n_{1}}\}$, where
$\hat{h}^{\varepsilon}(\cdot,\cdot):\Omega\times\mathbb{R}^{n_{1}}\mapsto\mathbb{R}^{n_{2}}$ is a Lipschitz continuous function
with Lipschitz constant
satisfying
\begin{equation}\label{3.19}
{\rm Lip}\hat{h}^{\varepsilon}(\omega,\cdot)\leq-\frac{K}{\gamma+\gamma_{f}}\frac{1}{1-\rho(\varepsilon)},\ \omega\in\Omega.
\end{equation}
\end{theorem}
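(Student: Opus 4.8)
The plan is to read off $\hat h^{\varepsilon}$ from the Lyapunov--Perron fixed point already produced in Lemma~\ref{lemma3}. For fixed $\omega\in\Omega$ and $x_{0}\in\mathbb{R}^{n_{1}}$, let $\hat\phi^{\varepsilon}(\cdot)=(\hat x^{\varepsilon}(\cdot),\hat y^{\varepsilon}(\cdot))\in C_{\beta}^{-}$ be the unique solution of \eqref{3.9}; note that the right-hand side of \eqref{3.9} involves the initial data only through the slow value $x_{0}$ (the fast component $\frac1\varepsilon\int_{-\infty}^{t}e^{F(t-s)/\varepsilon}\hat g_{2}(\cdots)\,ds$ carries no free parameter), so the fixed point is a function of $(\omega,x_{0})$ alone and I may write $\hat\phi^{\varepsilon}(\cdot,\omega,x_{0})$. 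Define
\[
\hat h^{\varepsilon}(\omega,x_{0}):=\hat y^{\varepsilon}(0,\omega,x_{0})
=\frac{1}{\varepsilon}\int_{-\infty}^{0}e^{\frac{-Fs}{\varepsilon}}\hat g_{2}\big(\theta_{s}^{\varepsilon}\omega,\hat x^{\varepsilon}(s),\hat y^{\varepsilon}(s)\big)\,ds .
\]
By Lemma~\ref{slow}, $(x_{0},y_{0})\in\mathcal{M}^{\varepsilon}(\omega)$ iff the solution of \eqref{3.6} through $(x_{0},y_{0})$ lies in $C_{\beta}^{-}$, in which case it satisfies \eqref{3.9}; evaluating the second coordinate of \eqref{3.9} at $t=0$ forces $y_{0}=\hat h^{\varepsilon}(\omega,x_{0})$. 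Conversely, for $y_{0}=\hat h^{\varepsilon}(\omega,x_{0})$ the fixed point of $\mathcal I^{\varepsilon}$ is precisely the (backward) solution of \eqref{3.6} through $(x_{0},y_{0})$ and it lies in $C_{\beta}^{-}$, so $(x_{0},y_{0})\in\mathcal{M}^{\varepsilon}(\omega)$. Hence $\mathcal{M}^{\varepsilon}(\omega)=\{(x_{0},\hat h^{\varepsilon}(\omega,x_{0})):x_{0}\in\mathbb{R}^{n_{1}}\}$, as claimed.

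Next I would establish the Lipschitz bound \eqref{3.19} directly from the integral formula for $\hat h^{\varepsilon}$. For $x_{0},x_{0}'\in\mathbb{R}^{n_{1}}$, subtract the two representations, use $|e^{-Fs/\varepsilon}|\le e^{-\gamma_{f}s/\varepsilon}$ for $s\le0$ from $(A_{1})$, and use that $\hat g_{2}$ has Lipschitz constant $K$, to get
\[
|\hat h^{\varepsilon}(\omega,x_{0})-\hat h^{\varepsilon}(\omega,x_{0}')|
\le\frac{K}{\varepsilon}\int_{-\infty}^{0}e^{-\gamma_{f}s/\varepsilon}\big(|\Delta\hat x^{\varepsilon}(s)|+|\Delta\hat y^{\varepsilon}(s)|\big)\,ds ,
\]
where $\Delta\hat\phi^{\varepsilon}(s)=\hat\phi^{\varepsilon}(s,\omega,x_{0})-\hat\phi^{\varepsilon}(s,\omega,x_{0}')$. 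Bounding $|\Delta\hat x^{\varepsilon}(s)|+|\Delta\hat y^{\varepsilon}(s)|\le e^{\beta s}\|\Delta\hat\phi^{\varepsilon}\|_{C_{\beta}^{-}}$ with $\beta=-\gamma/\varepsilon$, the $s$-integral becomes $\int_{-\infty}^{0}e^{-(\gamma+\gamma_{f})s/\varepsilon}\,ds=\varepsilon/(-(\gamma+\gamma_{f}))$, which is finite because $\gamma+\gamma_{f}<-K<0$, and produces the constant $-K/(\gamma+\gamma_{f})$. Combining this with the Lipschitz dependence of the fixed point on the initial slow value from Lemma~\ref{lemma4}, namely $\|\Delta\hat\phi^{\varepsilon}\|_{C_{\beta}^{-}}\le|x_{0}-x_{0}'|/(1-\rho(\varepsilon))$, yields exactly ${\rm Lip}\,\hat h^{\varepsilon}(\omega,\cdot)\le -\tfrac{K}{\gamma+\gamma_{f}}\tfrac{1}{1-\rho(\varepsilon)}$.

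It then remains to verify that $\mathcal{M}^{\varepsilon}$ is a random invariant set. For measurability of $\omega\mapsto\hat h^{\varepsilon}(\omega,x_{0})$ I would observe that the fixed point is the $C_{\beta}^{-}$-limit of the Picard iterates $(\mathcal I^{\varepsilon})^{n}\hat\phi_{0}$ starting from any $\hat\phi_{0}\in C_{\beta}^{-}$, each of which is measurable in $\omega$ since the $\hat g_{i}$ depend measurably on $\omega$ through $\eta^{1/\varepsilon}(\theta_{s}\omega)$; joint measurability in $(\omega,x_{0})$ then follows from the continuity (indeed Lipschitz continuity, by the previous step) in $x_{0}$, and the graph $\mathcal{M}^{\varepsilon}$ is a random set by \cite[Lemma 2.1]{SS08}. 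For invariance, take $(x_{0},y_{0})\in\mathcal{M}^{\varepsilon}(\omega)$ and $\tau>0$, set $(x_{1},y_{1})=\hat\phi^{\varepsilon}(\tau,\omega,(x_{0},y_{0}))$, and use the cocycle property to write $\hat\phi^{\varepsilon}(s,\theta_{\tau}\omega,(x_{1},y_{1}))=\hat\phi^{\varepsilon}(s+\tau,\omega,(x_{0},y_{0}))$; for $s\le-\tau$ this is controlled, after reweighting, by $e^{\beta\tau}\|\hat\phi^{\varepsilon}(\cdot,\omega,(x_{0},y_{0}))\|_{C_{\beta}^{-}}$, and for $-\tau\le s\le0$ it is bounded by continuity on a compact interval, so the orbit through $(x_{1},y_{1})$ lies in $C_{\beta}^{-}$ and $(x_{1},y_{1})\in\mathcal{M}^{\varepsilon}(\theta_{\tau}\omega)$. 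The only genuinely fussy point is the measurability bookkeeping --- propagating measurability through the iteration scheme and, if needed, invoking a perfection of the crude cocycle as mentioned after \eqref{cocycle}; all the quantitative content is already contained in Lemmas~\ref{slow}--\ref{lemma4}.
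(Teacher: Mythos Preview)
Your proof is correct and follows essentially the same approach as the paper: define $\hat h^{\varepsilon}(\omega,x_{0})$ as the fast component at $t=0$ of the Lyapunov--Perron fixed point from Lemma~\ref{lemma3}, identify $\mathcal{M}^{\varepsilon}(\omega)$ with its graph via Lemma~\ref{slow}, obtain the Lipschitz bound by combining the estimate \eqref{3.17} (which you re-derive) with Lemma~\ref{lemma4}, and prove invariance via the cocycle property. Your treatment is slightly more detailed on measurability (Picard iterates) and on why the shifted orbit remains in $C_{\beta}^{-}$, but the strategy and all the key estimates coincide with the paper's.
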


\begin{proof}
Taking any $x_{0}\in\mathbb{R}^{n_{1}}$, define the Lyapunov-Perron transform
\begin{equation}\label{3.20}
\hat{h}^{\varepsilon}(\omega,x_{0})= \frac{1}{\varepsilon}\int_{-\infty}^{0}e^{\frac{-Fs}{\varepsilon}}\hat{g}_{2}(\theta_{s}^{\varepsilon}\omega,\hat{x}^{\varepsilon}(s),\hat{y}^{\varepsilon}(s))ds
\end{equation}
 where $(\hat{x}^{\varepsilon}(s),\hat{ y}^{\varepsilon}(s))$ is the unique solution in $C_{\beta}^{-}$ of the system \eqref{3.9} with $s\leq0$.
It follows from Lemma \ref{slow}, Lemma \ref{lemma3}, \eqref{3.8} and \eqref{3.20} that
\begin{equation}\label{M}
\mathcal{M}^{\varepsilon}(\omega)=\left\{(\hat{x}_{0},\hat{h}^{\varepsilon}(\omega,x_{0})):x_{0}\in\mathbb{R}^{n_{1}}\right\}.
\end{equation}
By \eqref{3.17} and Lemma \ref{lemma4}, we have
\begin{equation*}
|\hat{h}^{\varepsilon}(\omega,x_{0})-\hat{h}^{\varepsilon}(\omega,x_{0}^{'})|\leq-\frac{K}{\gamma+\gamma_{f}}\frac{1}{1-\rho(\varepsilon)}|x_{0}-x_{0}^{'}|
\end{equation*}
for all $x_{0}, x_{0}^{'}\in\mathbb{R}^{n_{1}}$, $\omega\in\Omega$.

From Section \ref{s2}, $\mathcal{M}^{\varepsilon}(\omega)$
is a random set. Now we are going to prove that $\mathcal{M}^{\varepsilon}(\omega)$ is invariant in the following sense
\begin{equation*}
  \hat{\phi}^{\varepsilon}(s,\omega,\mathcal{M}^{\varepsilon}(\omega))\subset\mathcal{M}^{\varepsilon}(\theta_{s}^{\varepsilon}\omega)\ \ \text{for}\ \ s\geq0.
\end{equation*}
In other words, for each $(x_{0},y_{0})\in\mathcal{M}^{\varepsilon}(\omega)$, we have $\hat{\phi}^{\varepsilon}(s,\omega,(x _{0},y_{0}))\in\mathcal{M}^{\varepsilon}(\theta_{s}^{\varepsilon}\omega)$.
Using the cocycle property
\begin{equation*}
  \hat{\phi}^{\varepsilon}(\cdot+s,\omega,(x_{0},y_{0}))=\hat{\phi}^{\varepsilon}(\cdot,\theta_{s}\omega,\hat{\phi}^{\varepsilon}(s,\omega,(x _{0},y_{0})))
\end{equation*}
and the fact $\hat{\phi}^{\varepsilon}(\cdot,\omega,(x_{0},y_{0}))\in C_{\beta}^{-}$, it follows  that $\hat{\phi}^{\varepsilon}(\cdot,\theta_{s}\omega,\hat{\phi}^{\varepsilon}(s,\omega,(x_{0},y_{0})))\in C_{\beta}^{-}$. Thus, $\hat{\phi}^{\varepsilon}(s,\omega,(x_{0},y_{0}))\in\mathcal{M}^{\varepsilon}(\theta_{s}^{\varepsilon}\omega)$. This completes the proof.
\end{proof}
\begin{remark}
The invariant manifold $\mathcal{M}^{\varepsilon}(\omega)$ is independent of the choice of $\gamma$.
\end{remark}

Furthermore, the invariant manifold $\mathcal{M}^{\varepsilon}(\omega)$ exponentially attract other dynamical orbits. Hence, $\mathcal{M}^{\varepsilon}(\omega)$ is a slow manifold.
\begin{theorem}\label{Etp}
Assume that $(A_{1})-(A_{3})$ hold.
Then the invariant manifold $\mathcal{M}^{\varepsilon}(\omega)=\{(x_{0},\hat{h}^{\varepsilon}(\omega,x_{0})):x_{0}\in\mathbb{R}^{n_{1}}\}$
for slow-fast random system \eqref{3.6}  obtained in  Theorem \ref{slowmanifold} has exponential tracking property in the following sense: For any $z_{0}=(x_{0},y_{0})\in\mathbb{R}^{n}$, there is a $z_{0}^{'}=(x_{0}^{'},y_{0}^{'})\in\mathcal{M}^{\varepsilon}(\omega)$ such that
\begin{equation*}
  |\hat{\phi}^{\varepsilon}(t,\omega,z_{0}^{'})-\hat{\phi}^{\varepsilon}(t,\omega,z_{0})|\leq Ce^{-ct}|z_{0}^{'}-z_{0}|,\ \ t\geq0,
\end{equation*}
where $C>0$ and $c>0$.
\end{theorem}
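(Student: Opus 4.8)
\medskip

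The plan is to run the Lyapunov--Perron machinery of Lemmas \ref{lemma3}--\ref{lemma4} on the forward half-line, now around the reference orbit through $z_{0}$, and then to spend the one remaining degree of freedom (the initial fast datum) to force the shadowing orbit to start on $\mathcal{M}^{\varepsilon}(\omega)$. Fix $z_{0}=(x_{0},y_{0})\in\mathbb{R}^{n}$ and write $(p(t),q(t)):=\hat{\phi}^{\varepsilon}(t,\omega,z_{0})$ for $t\geq0$. For each parameter $b\in\mathbb{R}^{n_{2}}$, define on $C_{\beta}^{+}=C_{\beta}^{s,+}\times C_{\beta}^{f,+}$ (with $\beta=-\gamma/\varepsilon$) the operator
\[
\mathcal{J}_{b}^{\varepsilon}(\Gamma)[t]=\left(\begin{array}{c} -\int_{t}^{\infty}e^{S(t-s)}\Delta_{1}(s)\,ds\\[1.5ex] e^{\frac{Ft}{\varepsilon}}b+\frac{1}{\varepsilon}\int_{0}^{t}e^{\frac{F(t-s)}{\varepsilon}}\Delta_{2}(s)\,ds\end{array}\right),
\]
where $\Delta_{i}(s)=\hat{g}_{i}\big(\theta_{s}^{\varepsilon}\omega,p(s)+\Gamma_{1}(s),q(s)+\Gamma_{2}(s)\big)-\hat{g}_{i}\big(\theta_{s}^{\varepsilon}\omega,p(s),q(s)\big)$. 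The backward integral in the slow component is dictated by the requirement $\Gamma\in C_{\beta}^{+}$, exactly as the $u\to-\infty$ limit in the proof of Lemma \ref{slow} forces the representation \eqref{3.12}. Hypothesis $(A_{1})$ shows $\mathcal{J}_{b}^{\varepsilon}$ maps $C_{\beta}^{+}$ into itself, and repeating the estimates \eqref{3.16}--\eqref{3.17} over $[0,\infty)$ (the slow backward integral again contributing $\varepsilon K/(\gamma+\varepsilon\gamma_{s})$, the fast forward integral $-K/(\gamma+\gamma_{f})$) gives $\|\mathcal{J}_{b}^{\varepsilon}(\Gamma)-\mathcal{J}_{b}^{\varepsilon}(\tilde{\Gamma})\|_{C_{\beta}^{+}}\leq\rho(\varepsilon)\|\Gamma-\tilde{\Gamma}\|_{C_{\beta}^{+}}$ with the same $\rho(\varepsilon)$ as in \eqref{rho}. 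So, after possibly shrinking $\delta$, $\mathcal{J}_{b}^{\varepsilon}$ has a unique fixed point $\Gamma^{b}\in C_{\beta}^{+}$ with $\|\Gamma^{b}\|_{C_{\beta}^{+}}\leq|b|/(1-\rho(\varepsilon))$; differentiating the fixed-point identity shows $(p,q)+\Gamma^{b}$ solves \eqref{3.6} on $[0,\infty)$, hence by uniqueness $(p,q)+\Gamma^{b}=\hat{\phi}^{\varepsilon}(\cdot,\omega,z_{0}(b))$ with $z_{0}(b):=\big(x_{0}+\Gamma_{1}^{b}(0),\,y_{0}+b\big)$.

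It remains to pick $b$ so that $z_{0}(b)\in\mathcal{M}^{\varepsilon}(\omega)=\{(x,\hat{h}^{\varepsilon}(\omega,x))\}$, i.e. to solve $b=\mathcal{G}(b):=\hat{h}^{\varepsilon}\big(\omega,x_{0}+\Gamma_{1}^{b}(0)\big)-y_{0}$ in $\mathbb{R}^{n_{2}}$. Subtracting the fixed-point identities for $b,b'$ and arguing as in Lemma \ref{lemma4} gives $\|\Gamma^{b}-\Gamma^{b'}\|_{C_{\beta}^{+}}\leq|b-b'|/(1-\rho(\varepsilon))$; estimating only the slow component at $t=0$, whose Lyapunov--Perron kernel carries the small factor $\varepsilon K/(\gamma+\varepsilon\gamma_{s})$, yields $|\Gamma_{1}^{b}(0)-\Gamma_{1}^{b'}(0)|\leq\frac{\varepsilon K}{\gamma+\varepsilon\gamma_{s}}\frac{1}{1-\rho(\varepsilon)}|b-b'|$. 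Together with the Lipschitz bound \eqref{3.19} on $\hat{h}^{\varepsilon}$ this makes ${\rm Lip}(\mathcal{G})\to0$ as $\varepsilon\to0$, so for $\varepsilon$ small $\mathcal{G}$ is a contraction on $\mathbb{R}^{n_{2}}$ with a unique fixed point $b^{\ast}=b^{\ast}(\omega,z_{0})$; set $z_{0}'=(x_{0}',y_{0}'):=z_{0}(b^{\ast})\in\mathcal{M}^{\varepsilon}(\omega)$. With this choice $\hat{\phi}^{\varepsilon}(t,\omega,z_{0}')-\hat{\phi}^{\varepsilon}(t,\omega,z_{0})=\Gamma^{b^{\ast}}(t)$, so for $t\geq0$
\[
|\hat{\phi}^{\varepsilon}(t,\omega,z_{0}')-\hat{\phi}^{\varepsilon}(t,\omega,z_{0})|\leq\|\Gamma^{b^{\ast}}\|_{C_{\beta}^{+}}e^{\beta t}\leq\frac{|b^{\ast}|}{1-\rho(\varepsilon)}e^{-\frac{\gamma}{\varepsilon}t}\leq\frac{1}{1-\rho(\varepsilon)}e^{-\frac{\gamma}{\varepsilon}t}|z_{0}'-z_{0}|,
\]
using $|z_{0}'-z_{0}|\geq|b^{\ast}|$; this is the claim with $C=1/(1-\rho(\varepsilon))$ and $c=\gamma/\varepsilon$.

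The delicate step is the last fixed-point argument: \eqref{3.19} only guarantees ${\rm Lip}\,\hat{h}^{\varepsilon}<1$, not smallness, so a crude bound $|\Gamma_{1}^{b}(0)-\Gamma_{1}^{b'}(0)|\leq\|\Gamma^{b}-\Gamma^{b'}\|_{C_{\beta}^{+}}$ would force a spectral gap stronger than $(A_{3})$; it is essential to exploit that the slow correction at time $0$ sees the parameter $b$ only through the contractive slow integral, contributing an $O(\varepsilon)$ factor and pushing ${\rm Lip}(\mathcal{G})$ below $1$. Finally, since $\Gamma^{b}$ and $b^{\ast}$ are obtained as limits of Picard iterates they depend measurably on $\omega$ (and on $z_{0}$), so $z_{0}'$ may be taken to be a random variable, as the random-dynamical-systems setting requires.
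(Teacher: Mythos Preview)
Your argument is correct and arrives at exactly the same constants $C=1/(1-\rho(\varepsilon))$ and $c=\gamma/\varepsilon$ as the paper, but the route is genuinely different. The paper runs a \emph{single} Lyapunov--Perron fixed point on $C_{\beta}^{+}$: in its operator $\mathcal{J}^{\varepsilon}$ the fast initial datum is not a free parameter but is already tied to the slow datum through the constraint $v_{0}=\hat{h}^{\varepsilon}(\omega,u_{0}+x_{0})-y_{0}$, so that the manifold condition is built into the map. This forces the contraction estimate to include an extra cross-term $\|e^{Ft/\varepsilon}(v_{0}-\bar{v}_{0})\|_{C_{\beta}^{f,+}}$, which the paper bounds via ${\rm Lip}\,\hat{h}^{\varepsilon}\cdot|u_{0}-\bar{u}_{0}|$ and then by the slow integral representation of $u_{0}$; the resulting contraction rate is the more complicated $\bar{\rho}(\varepsilon)=\rho(\varepsilon)-\frac{\varepsilon K^{2}}{(\gamma+\varepsilon\gamma_{s})(\gamma+\gamma_{f})(1-\rho(\varepsilon))}$. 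You instead decouple the problem: first a contraction on $C_{\beta}^{+}$ with $b$ held fixed (rate $\rho(\varepsilon)$, clean), then a second contraction on $\mathbb{R}^{n_{2}}$ to pin $b$ to the manifold. What the paper gains is a single argument; what you gain is transparency---in particular your last paragraph makes explicit the mechanism that in the paper is buried inside $\bar{\rho}(\varepsilon)$: the slow correction $\Gamma_{1}^{b}(0)$ sees $b$ only through the $O(\varepsilon)$ kernel $\varepsilon K/(\gamma+\varepsilon\gamma_{s})$, and without this observation the coupling through ${\rm Lip}\,\hat{h}^{\varepsilon}$ alone would not contract. The two computations are in fact equivalent: your ${\rm Lip}(\mathcal{G})$ is precisely $\bar{\rho}(\varepsilon)-\rho(\varepsilon)$.
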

\begin{proof}
Let
$$\begin{array}{rl}
&\hat{\phi}^{\varepsilon}(t,\omega,(x_{0},y_{0}))=\big(\hat{x}^{\varepsilon}(t,\omega,(x_{0},y_{0})),\hat{y}^{\varepsilon}(t,\omega,(x_{0},y_{0}))\big)\\[1ex]
&\hat{\phi}^{\varepsilon}(t,\omega,(x_{0}^{'},y_{0}^{'}))=\big(\hat{x}^{\varepsilon}(t,\omega,(x_{0}^{'},y_{0}^{'})),\hat{y}^{\varepsilon}(t,\omega,(x_{0}^{'},y_{0}^{'}))\big)
\end{array}$$
be the two dynamical orbits of system \eqref{3.6} with the initial condition
\begin{eqnarray*}
  \hat{\phi}^{\varepsilon}(0,\omega,(x_{0},_{0}))=(x_{0},y_{0}),~~
  \hat{\phi}^{\varepsilon}(0,\omega,(x_{0}^{'},y_{0}^{'}))=(x_{0}^{'},y_{0}^{'}).
\end{eqnarray*}
Then
{\small\begin{align*}
\psi^{\varepsilon}(t)&=\hat{\phi}^{\varepsilon}(t,\omega,(x_{0}^{'},y_{0}^{'}))-\hat{\phi}^{\varepsilon}(t,\omega,(x_{0},y_{0}))\\
                     &=\big(\hat{x}^{\varepsilon}(t,\omega,(x_{0}^{'},y_{0}^{'}))-\hat{x}^{\varepsilon}(t,\omega,(x_{0},y_{0})),
                      \hat{y}^{\varepsilon}(t,\omega,(x_{0}^{'},y_{0}^{'}))-\hat{y}^{\varepsilon}(t,\omega,(x_{0},y_{0}))\big)\\
                     &:=\big(u^{\varepsilon}(t),v^{\varepsilon}(t)\big)
     \end{align*}}
satisfies the equation
\begin{eqnarray}\label{3.23}
\left\{\begin{array}{l}
   du^{\varepsilon}=Su^{\varepsilon}dt+\Delta \hat{g}_{1}(\theta_{t}^{\varepsilon}\omega,u^{\varepsilon},v^{\varepsilon})dt,\\
   dv^{\varepsilon}=\frac{1}{\varepsilon}Fv^{\varepsilon}dt+\frac{1}{\varepsilon}\Delta \hat{g}_{2}(\theta_{t}^{\varepsilon}\omega,u^{\varepsilon},v^{\varepsilon})dt
   \end{array}\right.
\end{eqnarray}
with the nonlinear items
{\small\begin{align}\nonumber
\Delta \hat{g}_{i}(\theta_{t}^{\varepsilon}\omega,u^{\varepsilon},v^{\varepsilon})&=\hat{g}_{i}\big(\theta_{t}^{\varepsilon}\omega,y^{\varepsilon}(t)+\hat{x}^{\varepsilon}(t,\omega,(x_{0},y_{0})),v^{\varepsilon}(t)+\hat{y}^{\varepsilon}(t,\omega,(x_{0},y_{0}))\big)\\\label{3.25}
    &\ \ \ -\hat{g}_{i}\big(\theta_{t}^{\varepsilon}\omega,\hat{x}^{\varepsilon}(t,\omega,(x_{0},y_{0})),\hat{y}^{\varepsilon}(t,\omega,(x_{0},y_{0}))\big),\ i=1,\ 2,
\end{align}}
and the initial condition
\begin{eqnarray*}
(u^{\varepsilon}(0),v^{\varepsilon}(0)) = (u_{0},v_{0}) = (x_{0}^{'}-x_{0},y_{0}^{'}-y_{0}).
\end{eqnarray*}
By direct calculation, for $t\geq0$, $\psi^{\varepsilon}(t)=(u^{\varepsilon}(t),v^{\varepsilon}(t))$ satisfying
\begin{equation}\label{3.27}
\left(
  \begin{array}{ccc}
  u^{\varepsilon}(t)\\
  v^{\varepsilon}(t)\\
  \end{array}
\right)
=  \left(\!\!\!\!\!\!
  \begin{array}{lll}
    &\int_{+\infty}^{t}e^{S(t-s)}\Delta \hat{g}_{1}(\theta_{s}^{\varepsilon}\omega,u^{\varepsilon}(s),v^{\varepsilon}(s))ds\\[1ex]
    & e^{\frac{Ft}{\varepsilon}}v_{0}+\frac{1}{\varepsilon}\int_{0}^{t}e^{\frac{F(t-s)}{\varepsilon}}\Delta \hat{g}_{2}(\theta_{s}^{\varepsilon}\omega,u^{\varepsilon}(s),v^{\varepsilon}(s))ds\\
  \end{array}\!\!
\right)
\end{equation}
 is a solution of \eqref{3.23} in $C_{\beta}^{+}$.

Now we use the Lyapunov-Perron transform again, to prove that \eqref{3.27} has a unique solution $(u^{\varepsilon}(t),v^{\varepsilon}(t))$ in $C_{\beta}^{+}$ with $(x_{0}^{'},y_{0}^{'})=(u_{0},v_{0})+(x_{0},y_{0})\in\mathcal{M}^{\varepsilon}(\omega)$. Clearly,
\begin{eqnarray}\nonumber
&&(x_{0}^{'},y_{0}^{'})\in\mathcal{M}^{\varepsilon}(\omega)\\\nonumber
&\Longleftrightarrow& y_{0}^{'}= \frac{1}{\varepsilon}\int_{-\infty}^{0}e^{\frac{-Fs}{\varepsilon}}\hat{g}_{2}(\theta_{s}^{\varepsilon}\omega,
\hat{x}^{\varepsilon}(s,\omega,x_{0}^{'}),\hat{y}^{\varepsilon}(s,\omega,x_{0}^{'}))ds\\\nonumber
&\Longleftrightarrow& v_{0}=-y_{0}+\frac{1}{\varepsilon}\int_{-\infty}^{0}e^{\frac{-Fs}{\varepsilon}}\hat{g}_{2}(\theta_{s}^{\varepsilon}\omega,
\hat{x}^{\varepsilon}(s,\omega,u_{0}+x_{0}),\hat{y}^{\varepsilon}(s,\omega,u_{0}+x_{0}))ds  \\\label{3.28} &&\hspace{5mm}=-y_{0}+\hat{h}^{\varepsilon}(\omega,u_{0}+x_{0}).
\end{eqnarray}
Taking $\psi^{\varepsilon}=(u^{\varepsilon},v^{\varepsilon})\in C_{\beta}^{+}$, define two operators $\mathcal{J}_{s}^{\varepsilon}:C_{\beta}^{+}\rightarrow C_{\beta}^{s,+}$ and $\mathcal{J}_{f}^{\varepsilon}:C_{\beta}^{+}\rightarrow C_{\beta}^{f,+}$ satisfying
\begin{eqnarray*}
\begin{array}{l}
 \mathcal{J}_{s}^{\varepsilon}(\psi^{\varepsilon})[t]=\int_{+\infty}^{t}e^{S(t-s)}\Delta \hat{g}_{1}(\theta_{s}^{\varepsilon}\omega,u^{\varepsilon}(s),v^{\varepsilon}(s))ds,\\[1mm]
 \mathcal{J}_{f}^{\varepsilon}(\psi^{\varepsilon})[t]= e^{\frac{Ft}{\varepsilon}}v_{0}+\frac{1}{\varepsilon}\int_{0}^{t}e^{\frac{F(t-s)}{\varepsilon}}\Delta \hat{g}_{2}(\theta_{s}^{\varepsilon}\omega,u^{\varepsilon}(s),v^{\varepsilon}(s))ds.
  \end{array}
\end{eqnarray*}
Moreover, the Lyapunov-Perron transform $\mathcal{J}^{\varepsilon}:C_{\beta}^{+}\rightarrow C_{\beta}^{+}$ is given by
\begin{equation}\label{3.31}
\mathcal{J}^{\varepsilon}(\psi^{\varepsilon})[t]
= \left(
  \begin{array}{ccc}
  \mathcal{J}_{s}^{\varepsilon}(\psi^{\varepsilon})[t]\\
  \mathcal{J}_{f}^{\varepsilon}(\psi^{\varepsilon})[t]\\
  \end{array}
\right).
\end{equation}
We have the following estimates
{\small\begin{align*}
&||\mathcal{J}_{s}^{\varepsilon}(\psi^{\varepsilon})||_{C_{\beta}^{s,+}}\\
&=\Big|\Big|\int_{+\infty}^{t}e^{S(t-s)} \big[\hat{g}_{1}(\theta_{s}^{\varepsilon}\omega,u^{\varepsilon}(s)+\hat{x}^{\varepsilon}(s),v^{\varepsilon}(s)+\hat{y}^{\varepsilon}(s))-\hat{g}_{1}(\theta_{s}^{\varepsilon}\omega,\hat{x}^{\varepsilon}(s),\hat{y}^{\varepsilon}(s))\big]ds\Big|\Big|_{C_{\beta}^{s,+}}\\
   &\leq K\sup_{t\in [0,\infty)}\Big\{e^{-\beta t}\int_{t}^{+\infty}e^{\gamma_{s}(t-s)}(|u^{\varepsilon}(s)|+|v^{\varepsilon}(s)|)ds\Big\}\\
   &\leq K\sup_{t\in [0,\infty)}\Big\{\int_{t}^{+\infty}e^{\frac{\gamma+\varepsilon\gamma_{s}}{\varepsilon}(t-s)}ds\Big\}||\psi^{\varepsilon}||_{C_{\beta}^{+}}\\
   &\leq\frac{\varepsilon K}{\gamma+\varepsilon\gamma_{s}}||\psi^{\varepsilon}||_{C_{\beta}^{+}},
\end{align*}}
and
{\small\begin{align*}
||\mathcal{J}_{f}^{\varepsilon}(\psi^{\varepsilon})||_{C_{\beta}^{f,+}}
&=\Big|\Big|e^{\frac{Ft}{\varepsilon}}v_{0}+\frac{1}{\varepsilon}\int_{0}^{t}e^{\frac{F(t-s)}{\varepsilon}} \big[\hat{g}_{2}(\theta_{s}^{\varepsilon}\omega,u^{\varepsilon}(s)+\hat{x}^{\varepsilon}(s),v^{\varepsilon}(s)+\hat{y}^{\varepsilon}(s))\\
&\ \ \ \ -\hat{g}_{2}(\theta_{s}^{\varepsilon}\omega,\hat{x}^{\varepsilon}(s),\hat{y}^{\varepsilon}(s))\big]ds\Big|\Big|_{C_{\beta}^{f,+}}\\
&\leq \frac{K}{\varepsilon}\sup_{t\in [0,\infty)}\Big\{e^{-\beta t}\int_{0}^{t}e^{\frac{\gamma_{f}(t-s)}{\varepsilon}}(|u^{\varepsilon}(s)|+|v^{\varepsilon}(s)|)ds\Big\}+\sup_{t\in [0,\infty)}\big\{e^{-\beta t}e^{\frac{\gamma_{f}}{\varepsilon}t}|v_{0}|\big\}\\
&\leq \frac{K}{\varepsilon}\sup_{t\in [0,\infty)}\Big\{\int_{0}^{t}e^{\frac{\gamma+\gamma_{f}}{\varepsilon}(t-s)}ds\Big\}||\psi^{\varepsilon}||_{C_{\beta}^{+}}+|v_{0}|\\
&=-\frac{K}{\gamma+\gamma_{f}}||\psi^{\varepsilon}||_{C_{\beta}^{+}}+|v_{0}|.
\end{align*}}
Hence, by \eqref{3.31}, we obtain
\begin{equation}\label{3.32}
||\mathcal{J}^{\varepsilon}(\psi^{\varepsilon})||_{C_{\beta}^{+}}\leq\rho(\varepsilon)||\psi^{\varepsilon}||_{C_{\beta}^{+}}+|v_{0}|
\end{equation}
where $\rho(\varepsilon)$ is defined as \eqref{rho} in the proof of Lemma \ref{lemma3}.

For any $\psi^{\varepsilon}=(u^{\varepsilon},v^{\varepsilon}),\ \bar{\psi}^{\varepsilon}=(\bar{u}^{\varepsilon},\bar{v}^{\varepsilon})\in C_{\beta}^{+}$,
{\small\begin{equation}\label{3.33}
\begin{array}{rl}
&||\mathcal{J}_{s}^{\varepsilon}(\psi^{\varepsilon})-\mathcal{J}_{s}^{\varepsilon}(\bar{\psi}^{\varepsilon})||_{C_{\beta}^{s,+}}\\[1ex]
=&\displaystyle\Big|\Big|\int_{+\infty}^{t}e^{S(t-s)}\big[\Delta  \hat{g}_{1}(\theta_{s}^{\varepsilon}\omega,u^{\varepsilon}(s),v^{\varepsilon}(s))
-\Delta \hat{g}_{1}(\theta_{s}^{\varepsilon}\omega,\bar{u}^{\varepsilon}(s),\bar{v}^{\varepsilon}(s))\big]ds\Big|\Big|_{C_{\beta}^{s,+}}\\[1ex]
=&\displaystyle\Big|\Big|\int_{+\infty}^{t}e^{S(t-s)}\big[ \hat{g}_{1}(\theta_{s}^{\varepsilon}\omega,u^{\varepsilon}(s)+\hat{x}^{\varepsilon}(s),v^{\varepsilon}(s)+\hat{y}^{\varepsilon}(s))\\
&- \hat{g}_{1}(\theta_{s}^{\varepsilon}\omega,\bar{u}^{\varepsilon}(s)+\hat{x}^{\varepsilon}(s),\bar{v}^{\varepsilon}(s)+\hat{y}^{\varepsilon}(s))\big]ds\Big|\Big|_{C_{\beta}^{s,+}}\\[1ex]
\leq&\displaystyle K\sup_{t\in[0,\infty)}\Big\{e^{-\beta t}\int_{t}^{+\infty}e^{\gamma_{s}(t-s)}(|u^{\varepsilon}(s)-\bar{u}^{\varepsilon}(s)|+|v^{\varepsilon}(s)-\bar{v}^{\varepsilon}(s)|)ds\Big\}\\[1ex]
\leq&\displaystyle K\sup_{t\in[0,\infty)}\Big\{\int_{t}^{+\infty}e^{\frac{\gamma+\varepsilon\gamma_{s}}{\varepsilon}(t-s)}ds\Big\}||\psi^{\varepsilon}-\bar{\psi}^{\varepsilon}||_{C_{\beta}^{+}}\\[2ex]
=&\displaystyle \frac{\varepsilon K}{\gamma+\varepsilon\gamma_{s}}||\psi^{\varepsilon}-\bar{\psi}^{\varepsilon}||_{C_{\beta}^{+}}.
\end{array}
\end{equation}}
On the one hand, by \eqref{3.28}, we have
{\small\begin{equation*}
\begin{array}{rl}
&|e^{\frac{Ft}{\varepsilon}}(v_{0}-\bar{v}_{0})|\leq e^{\frac{\gamma_{f}}{\varepsilon}t}{\rm Lip}\hat{h}^{\varepsilon}|u_{0}-\bar{u}_{0}|\\[1ex]
\leq&\displaystyle e^{\frac{\gamma_{f}}{\varepsilon}t}{\rm Lip}\hat{h}^{\varepsilon}\Big|\int_{+\infty}^{0}e^{S(-s)}\big[\Delta \hat{g}_{1}(\theta_{s}^{\varepsilon}\omega,u^{\varepsilon}(s),v^{\varepsilon}(s))-\Delta \hat{g}_{1}(\theta_{s}^{\varepsilon}\omega,\bar{u}^{\varepsilon}(s),\bar{v}^{\varepsilon}(s))\big]ds\Big|\\[2ex]
=&\displaystyle e^{\frac{\gamma_{f}}{\varepsilon}t}{\rm Lip}\hat{h}^{\varepsilon}\Big|\int_{+\infty}^{0}e^{S(-s)}\big[ \hat{g}_{1}(\theta_{s}^{\varepsilon}\omega,u^{\varepsilon}(s)+\hat{x}^{\varepsilon}(s),v^{\varepsilon}(s)+\hat{y}^{\varepsilon}(s))\\[1ex]
\displaystyle&\ \ \ - \hat{g}_{1}(\theta_{s}^{\varepsilon}\omega,\bar{u}^{\varepsilon}(s)+\hat{x}^{\varepsilon}(s),\bar{v}^{\varepsilon}(s)+\hat{y}^{\varepsilon}(s))\big]ds\Big|\\[1ex]
\leq&\displaystyle{\rm Lip}\hat{h}^{\varepsilon}\cdot Ke^{\frac{\gamma_{f}}{\varepsilon}t}\int_{0}^{+\infty}e^{\gamma_{s}(-s)}|\psi^{\varepsilon}(s)-\bar{\psi}^{\varepsilon}(s)|ds,
\end{array}
\end{equation*}}
which leads to
{\small\begin{align}\nonumber
||e^{\frac{Ft}{\varepsilon}}(v_{0}-\bar{v}_{0})||_{C_{\beta}^{f,+}}&\leq{\rm Lip}\hat{h}^{\varepsilon}\cdot K||\psi^{\varepsilon}(s)-\bar{\psi}^{\varepsilon}(s)||_{C_{\beta}^{+}}\sup_{t\in[0,\infty)}
\Big\{e^{(\frac{\gamma_{f}}{\varepsilon}-\beta)t}\int_{0}^{+\infty}e^{(\beta-\gamma_{s})s}ds\Big\}\\\label{varepsilon1}
   &\leq\frac{\varepsilon K{\rm Lip}\hat{h}^{\varepsilon}}{\gamma+\varepsilon\gamma_{s}}||\psi^{\varepsilon}(s)-\bar{\psi}^{\varepsilon}(s)||_{C_{\beta}^{+}}.
\end{align}}
On the other hand, we observe
{\small\begin{equation}
\begin{array}{rl}
&\displaystyle\Big|\Big|\frac{1}{\varepsilon}\int_{0}^{t}e^{\frac{F(t-s)}{\varepsilon}}\big[\Delta \hat{g}_{2}(\theta_{s}^{\varepsilon}\omega,u^{\varepsilon}(s),v^{\varepsilon}(s))-\Delta \hat{g}_{2}(\theta_{s}^{\varepsilon}\omega,\bar{u}^{\varepsilon}(s),\bar{v}^{\varepsilon}(s))\big]ds\Big|\Big|_{C_{\beta}^{f,+}}\\
=&\displaystyle\Big|\Big|\frac{1}{\varepsilon}\int_{0}^{t}e^{\frac{F(t-s)}{\varepsilon}}\big[\hat{g}_{2}(\theta_{s}^{\varepsilon}\omega,u^{\varepsilon}(s)+\hat{x}^{\varepsilon}(s),v^{\varepsilon}(s)+\hat{y}^{\varepsilon}(s))\\[1ex]
&-\hat{g}_{2}(\theta_{s}^{\varepsilon}\omega,\bar{u}^{\varepsilon}(s)+\hat{x}^{\varepsilon}(s),\bar{v}^{\varepsilon}(s)+\hat{y}^{\varepsilon}(s))\big]ds\Big|\Big|_{C_{\beta}^{f,+}}\\[1ex]
\leq&\displaystyle \frac{K}{\varepsilon}\sup_{t\in[0,\infty)}\Big\{e^{-\beta t}\int_{0}^{t}e^{\frac{\gamma_{f}(t-s)}{\varepsilon}}(|u^{\varepsilon}(s)-\bar{u}^{\varepsilon}(s)|+|v^{\varepsilon}(s)-\bar{v}^{\varepsilon}(s)|)ds\Big\}\\[2ex]
\leq&\displaystyle \frac{K}{\varepsilon}\sup_{t\in[0,\infty)}\Big\{\int_{0}^{t}e^{\frac{\gamma+\gamma_{f}}{\varepsilon}(t-s)}ds\Big\}||\psi^{\varepsilon}-\bar{\psi}^{\varepsilon}||_{C_{\beta}^{+}}\\[2ex]
=&\displaystyle-\frac{K}{\gamma+\gamma_{f}}||\psi^{\varepsilon}-\bar{\psi}^{\varepsilon}||_{C_{\beta}^{+}}.
\end{array}
\label{varepsilon2}
\end{equation}}
Using  \eqref{3.19}, \eqref{varepsilon1} and \eqref{varepsilon2}, it follows that
{\small\begin{equation}\label{3.34}
\begin{array}{rl}
&||\mathcal{J}_{f}^{\varepsilon}(\psi^{\varepsilon})-\mathcal{J}_{f}^{\varepsilon}(\bar{\psi}^{\varepsilon})||_{C_{\beta}^{f,+}}\\
\leq&\displaystyle||e^{\frac{Ft}{\varepsilon}}(v_{0}-\bar{v}_{0})||_{C_{\beta}^{f,+}}+\Big|\Big|\frac{1}{\varepsilon}\int_{0}^{t}e^{\frac{F(t-s)}{\varepsilon}}\big[\Delta \hat{g}_{2}(u^{\varepsilon}(s),v^{\varepsilon}(s),\theta_{s}^{\varepsilon}\omega)\\
&\ \ -\Delta \hat{g}_{2}(\bar{u}^{\varepsilon}(s),\bar{v}^{\varepsilon}(s),\theta_{s}^{\varepsilon}\omega)\big]ds\Big|\Big|_{C_{\beta}^{f,+}}\\
\leq&\displaystyle(\frac{\varepsilon K{\rm Lip}\hat{h}^{\varepsilon}}{\gamma+\varepsilon\gamma_{s}}-\frac{K}{\gamma+\gamma_{f}})||\psi^{\varepsilon}-\bar{\psi}^{\varepsilon}||_{C_{\beta}^{+}}\\
\leq&\displaystyle-\Big(\frac{\varepsilon K^{2}}{(\gamma+\varepsilon\gamma_{s})(\gamma+\gamma_{f})[1-K(\frac{\varepsilon }{\gamma+\varepsilon\gamma_{s}}-\frac{1}{\gamma+\gamma_{f}})]}+\frac{K}{\gamma+\gamma_{f}}\Big)||\psi^{\varepsilon}-\bar{\psi}^{\varepsilon}||_{C_{\beta}^{+}}.
\end{array}
\end{equation}}
By \eqref{3.33} and \eqref{3.34}, we have
\begin{equation*}
||\mathcal{J}^{\varepsilon}(\psi^{\varepsilon})-\mathcal{J}^{\varepsilon}(\bar{\psi}^{\varepsilon})||_{C_{\beta}^{+}}\leq\bar{\rho}(\varepsilon)||\psi^{\varepsilon}-\bar{\psi}^{\varepsilon}||_{C_{\beta}^{+}}
\end{equation*}
with
\begin{align}\nonumber
   \bar{\rho}(\varepsilon)&=\frac{\varepsilon K}{\gamma+\varepsilon\gamma_{s}}-\frac{K}{\gamma+\gamma_{f}}
   -\frac{\varepsilon K^{2}}{(\gamma+\varepsilon\gamma_{s})(\gamma+\gamma_{f})[1-K(\frac{\varepsilon }{\gamma+\varepsilon\gamma_{s}}-\frac{1}{\gamma+\gamma_{f}})]}.
\end{align}
Note that $\displaystyle0<K<-(\gamma+\gamma_{f})$. Clearly, for small $\epsilon$, we have $0<\bar{\rho}(\varepsilon)<1$,
which implies that $\mathcal{J}^{\varepsilon}$ is a contraction in $C_{\beta}^{+}$.
Thus, there is a unique fixed point $\psi^{\varepsilon}:=(u^{\varepsilon},v^{\varepsilon})$ in $C_{\beta}^{+}$.
Further, $\psi^{\varepsilon}$  satisfies $(x_{0}^{'},y_{0}^{'})=(u_{0},v_{0})+(x_{0},y_{0})\in\mathcal{M}^{\varepsilon}(\omega)$. In fact, the solution of \eqref{3.27} in $C_{\beta}^{+}$ if and only if it is a fixed point of the Lyapunov-Perron transform \eqref{3.31}.
Moreover, it follows from \eqref{3.32} that
\begin{equation*}
||\psi^{\varepsilon}(\cdot)||_{C_{\beta}^{+}}\leq\frac{1}{1-\rho(\varepsilon)}|v_{0}|
\end{equation*}
which leads to
\begin{equation}\label{3.35}
||\hat{\phi}^{\varepsilon}(t,\omega,(x_{0}^{'},y_{0}^{'}))-\hat{\phi}^{\varepsilon}(t,\omega,(x_{0},y_{0}))||_{C_{\beta}^{+}}\leq\frac{1}{1-\rho(\varepsilon)}|y_{0}^{'}-y_{0}|.
\end{equation}
And then
\begin{equation*}
|\hat{\phi}^{\varepsilon}(t,\omega,z_{0}^{'})-\hat{\phi}^{\varepsilon}(t,\omega,z_{0})|\leq\frac{e^{-\frac{\gamma}{\varepsilon} t}}{1-\rho(\varepsilon)}|z_{0}^{'}-z_{0}|,
\end{equation*}
with $t\geq0$ and $-\gamma/\varepsilon<0$. Hence,   the proof has been finished.
\end{proof}
\begin{remark}
  $0<\rho(\varepsilon)<1$ and $0<\bar{\rho}(\varepsilon)<1$
are the critical points in the proof of Lemma \ref{lemma3} and Theorem \ref{Etp} respectively.
\end{remark}
According to Theorem \ref{slowmanifold} and Theorem \ref{Etp}, the system \eqref{3.6}
has an exponential tracking slow manifold. By the relationship between of $\varphi^\varepsilon(t,\omega)$ and $\hat{\phi}^\varepsilon(t,\omega)$, so has the slow-fast system \eqref{3.2}.
\begin{theorem}\label{yuan}
Assume that $(A_{1})-(A_{3})$ hold. The slow-fast system \eqref{3.2} with jumps has a slow manifold
\begin{align*}
  M^{\varepsilon}(\omega)=(T^{\varepsilon})^{-1}\mathcal{M}^{\varepsilon}(\omega)=\mathcal{M}^{\varepsilon}(\omega)+(0,\sigma\eta^{\frac{1}{\varepsilon}}(\omega))
 =\{(x_{0},h^{\varepsilon}(\omega,x_{0})):x_{0}\in\mathbb{R}^{n_{1}}\}
\end{align*}
with $h^{\varepsilon}(\omega,x_{0})=\hat{h}^{\varepsilon}(\omega,x_{0})+\sigma\eta^{\frac{1}{\varepsilon}}(\theta_{t}\omega)$, where $T^{\varepsilon}$ is defined in \eqref{T}.
\end{theorem}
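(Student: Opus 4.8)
The plan is to deduce Theorem~\ref{yuan} from Theorem~\ref{slowmanifold} and Theorem~\ref{Etp} by transporting the manifold along the conjugacy furnished by the random transformation $T^{\varepsilon}$ of \eqref{T}, exactly along the lines sketched at the end of Section~\ref{s2}. First I would record the relevant properties of $T^{\varepsilon}$: the map $(t,\omega,z)\mapsto T^{\varepsilon}(\theta_{t}\omega,z)$ is Carath\'eodory, for fixed $\omega$ the map $z=(x,y)\mapsto T^{\varepsilon}(\omega,z)=(x,y-\sigma\eta^{\frac{1}{\varepsilon}}(\omega))$ is a homeomorphism of $\mathbb{R}^{n}$ with inverse $(T^{\varepsilon})^{-1}(\omega,(x,y))=(x,y+\sigma\eta^{\frac{1}{\varepsilon}}(\omega))$, and \eqref{relationship} says precisely that $\varphi^{\varepsilon}$ and $\hat{\phi}^{\varepsilon}$ are conjugated through $T^{\varepsilon}$ in the sense of \eqref{conjugacy}. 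All of this is asserted on the $\theta$-invariant set of full measure carrying $\eta^{\frac{1}{\varepsilon}}$ (Lemma~\ref{lemma1}).

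Second, I would invoke the general principle from the end of Section~\ref{s2}: since $\mathcal{M}^{\varepsilon}$ is an invariant manifold for $\hat{\phi}^{\varepsilon}$ by Theorem~\ref{slowmanifold}, the family $M^{\varepsilon}(\omega):=(T^{\varepsilon})^{-1}(\omega,\mathcal{M}^{\varepsilon}(\omega))$ is invariant for $\varphi^{\varepsilon}$. Applying $(T^{\varepsilon})^{-1}$ to the graph $\{(x_{0},\hat{h}^{\varepsilon}(\omega,x_{0})):x_{0}\in\mathbb{R}^{n_{1}}\}$ gives $M^{\varepsilon}(\omega)=\{(x_{0},\hat{h}^{\varepsilon}(\omega,x_{0})+\sigma\eta^{\frac{1}{\varepsilon}}(\omega)):x_{0}\in\mathbb{R}^{n_{1}}\}$, i.e. the claimed graph of $h^{\varepsilon}(\omega,\cdot):=\hat{h}^{\varepsilon}(\omega,\cdot)+\sigma\eta^{\frac{1}{\varepsilon}}(\omega)$. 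Because $x_{0}\mapsto\sigma\eta^{\frac{1}{\varepsilon}}(\omega)$ is constant in $x_{0}$, the function $h^{\varepsilon}(\omega,\cdot)$ inherits the Lipschitz bound \eqref{3.19} of $\hat{h}^{\varepsilon}(\omega,\cdot)$, and since $\omega\mapsto\eta^{\frac{1}{\varepsilon}}(\omega)$ and $\omega\mapsto\hat{h}^{\varepsilon}(\omega,x_{0})$ are random variables, $M^{\varepsilon}$ is again a Lipschitz continuous random invariant manifold.

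Third, I would transport the exponential tracking property. Given $z_{0}=(x_{0},y_{0})\in\mathbb{R}^{n}$, set $\hat{z}_{0}:=T^{\varepsilon}(\omega,z_{0})$; Theorem~\ref{Etp} yields $\hat{z}_{0}'\in\mathcal{M}^{\varepsilon}(\omega)$ with $|\hat{\phi}^{\varepsilon}(t,\omega,\hat{z}_{0}')-\hat{\phi}^{\varepsilon}(t,\omega,\hat{z}_{0})|\leq Ce^{-ct}|\hat{z}_{0}'-\hat{z}_{0}|$ for $t\geq0$. Put $z_{0}':=(T^{\varepsilon})^{-1}(\omega,\hat{z}_{0}')\in M^{\varepsilon}(\omega)$. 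Since $T^{\varepsilon}(\omega,\cdot)$ and $(T^{\varepsilon})^{-1}(\omega,\cdot)$ merely translate the fast coordinate, they are isometries, so $|z_{0}'-z_{0}|=|\hat{z}_{0}'-\hat{z}_{0}|$; moreover, by \eqref{relationship} the added term $(0,\sigma\eta^{\frac{1}{\varepsilon}}(\theta_{t}\omega))$ cancels, so $\varphi^{\varepsilon}(t,\omega,z_{0}')-\varphi^{\varepsilon}(t,\omega,z_{0})=\hat{\phi}^{\varepsilon}(t,\omega,\hat{z}_{0}')-\hat{\phi}^{\varepsilon}(t,\omega,\hat{z}_{0})$. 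Combining the last estimates gives $|\varphi^{\varepsilon}(t,\omega,z_{0}')-\varphi^{\varepsilon}(t,\omega,z_{0})|\leq Ce^{-ct}|z_{0}'-z_{0}|$ for $t\geq0$, so $M^{\varepsilon}$ has the exponential tracking property and is therefore a slow manifold for \eqref{3.2}.

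The point requiring care — rather than a genuine obstacle — is the third step: one must note that the conjugating map is exactly distance preserving in the relevant variable, so that the tracking constant $C$ and rate $c$ pass unchanged from $\hat{\phi}^{\varepsilon}$ to $\varphi^{\varepsilon}$; had $T^{\varepsilon}$ been only bi-Lipschitz one would get tracking merely with a modified random constant. One should also keep all identities on the $\theta$-invariant full-measure set carrying $\eta^{\frac{1}{\varepsilon}}$, and, if a perfect cocycle is desired, invoke the perfection procedure mentioned after \eqref{cocycle}.
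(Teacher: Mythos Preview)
Your proposal is correct and follows essentially the same route as the paper: transport $\mathcal{M}^{\varepsilon}$ along the conjugacy $T^{\varepsilon}$ using \eqref{relationship} to obtain invariance of $M^{\varepsilon}$, then argue that exponential tracking survives the transform. The only noteworthy difference is in the tracking step: the paper justifies preservation of tracking by citing the sublinear growth of $t\mapsto\eta^{\frac{1}{\varepsilon}}(\theta_{t}\omega)$, whereas you observe more directly that $T^{\varepsilon}(\omega,\cdot)$ is a translation, so the added term $(0,\sigma\eta^{\frac{1}{\varepsilon}}(\theta_{t}\omega))$ cancels in the difference $\varphi^{\varepsilon}(t,\omega,z_{0}')-\varphi^{\varepsilon}(t,\omega,z_{0})$ and the constants $C,c$ pass through unchanged --- your argument is cleaner and does not require the growth estimate at all.
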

\begin{proof}
We have the relationship between $\varphi$ and $\hat{\varphi}$ given by \eqref{relationship}:
\begin{align*}
   \varphi^{\varepsilon}(t,\omega,M^{\varepsilon}(\omega))&=(T^{\varepsilon})^{-1}\big(\theta_{t}\omega,\hat{\varphi}^{\varepsilon}(t,\omega,T^{\varepsilon}(\omega,M^{\varepsilon}(\omega)))\big)  \\
   &=(T^{\varepsilon})^{-1}\big(\theta_{t}\omega,\hat{\varphi}^{\varepsilon}(t,\omega,\mathcal{M}^{\varepsilon}(\omega))\big)\subset(T^{\varepsilon})^{-1}\big(\theta_{t}\omega,\mathcal{M}^{\varepsilon}(\theta_{t}\omega)\big)=M^{\varepsilon}(\theta_{t}\omega).
\end{align*}
Note that $t\rightarrow\eta^{\frac{1}{\varepsilon}}(\theta_{t}\omega)$ has a sublinear growth rate for $1<\alpha<2$; see \cite{DLS03,L10}. Thus the
transform $(T^{\varepsilon})^{-1}(\theta_{t}\omega)$ does not change the exponential tracking property. It follows that  $M^{\varepsilon}(\omega)$ is a slow manifold.
\end{proof}
\begin{remark}
It is worthy mentioning that the dynamical orbits in $ M^{\varepsilon}(\omega)$ are c\'adl\'ag and adapted.
\end{remark}
\begin{corollary}\label{reduction}
Assume that $(A_{1})-(A_{3})$ hold and that $\varepsilon$ is sufficiently small. For any solution $\phi^{\varepsilon}(t,\omega)=(x^{\varepsilon}(t,\omega),y^{\varepsilon}(t,\omega))$ with initial condition
$z_{0}=(x_{0},y_{0})$ to the fast-slow system \eqref{3.2}, there exists a solution $\bar{\phi}^{\varepsilon}(t,\omega)=(\bar{x}^{\varepsilon}(t,\omega),\bar{y}^{\varepsilon}(t,\omega))$ with initial point $\bar{z}_{0}=(\bar{x}_{0},\bar{y}_{0})$ on the manifold $M^{\varepsilon}(\omega)$ which satisfies the reduction system
\begin{equation}\label{3.36}
\displaystyle d\bar{x}^{\varepsilon}=S\bar{x}^{\varepsilon}dt+g_{1}(\bar{x}^{\varepsilon},h^{\varepsilon}(\theta_{t}\omega,\bar{x}^{\varepsilon}))dt
\end{equation}
such that
\begin{equation*}
|\varphi^{\varepsilon}(t,\omega)-\bar{\varphi}^{\varepsilon}(t,\omega)|\leq\frac{e^{-\frac{\gamma}{\varepsilon} t}}{1-(\frac{\varepsilon K}{\gamma+\varepsilon\gamma_{s}}-\frac{K}{\gamma+\gamma_{f}})}|z_{0}-\bar{z}_{0}|,\ ~\mbox{a.s.}~\omega\in\Omega,
\end{equation*}
with $t\geq0$ and $-\gamma/\varepsilon<0$.
\end{corollary}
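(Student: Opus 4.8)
The plan is to read the corollary off from the exponentially tracking slow manifold $M^{\varepsilon}(\omega)$ of Theorem \ref{yuan}, using its invariance to obtain the reduced equation and the proof of Theorem \ref{Etp} for the quantitative bound. Fix a solution $\varphi^{\varepsilon}(t,\omega)=\varphi^{\varepsilon}(t,\omega,z_{0})=(x^{\varepsilon}(t,\omega),y^{\varepsilon}(t,\omega))$ of \eqref{3.2} with $z_{0}=(x_{0},y_{0})\in\mathbb{R}^{n}$. By the exponential tracking property of $M^{\varepsilon}$ established in Theorem \ref{yuan}, there is a point $\bar{z}_{0}=(\bar{x}_{0},\bar{y}_{0})\in M^{\varepsilon}(\omega)$ whose orbit $\bar{\varphi}^{\varepsilon}(t,\omega):=\varphi^{\varepsilon}(t,\omega,\bar{z}_{0})$ tracks $\varphi^{\varepsilon}(t,\omega)$; concretely, $\bar{z}_{0}$ is the point of $M^{\varepsilon}(\omega)$ obtained by transporting through $(T^{\varepsilon})^{-1}$ the point of $\mathcal{M}^{\varepsilon}(\omega)$ furnished by the fixed point of the operator $\mathcal{J}^{\varepsilon}$ in the proof of Theorem \ref{Etp}.

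Next I would check that $\bar{\varphi}^{\varepsilon}$ satisfies the reduced equation \eqref{3.36}. Since $\bar{z}_{0}\in M^{\varepsilon}(\omega)$ and $M^{\varepsilon}$ is positively invariant for $\varphi^{\varepsilon}$ (Theorem \ref{yuan}), we have $\bar{\varphi}^{\varepsilon}(t,\omega)\in M^{\varepsilon}(\theta_{t}\omega)$ for every $t\geq0$. Because $M^{\varepsilon}(\omega)=\{(x_{0},h^{\varepsilon}(\omega,x_{0})):x_{0}\in\mathbb{R}^{n_{1}}\}$ is the graph of $h^{\varepsilon}$, this is equivalent to $\bar{y}^{\varepsilon}(t,\omega)=h^{\varepsilon}(\theta_{t}\omega,\bar{x}^{\varepsilon}(t,\omega))$ for all $t\geq0$. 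Substituting this relation into the slow equation of the full system \eqref{3.2}, which $\bar{\varphi}^{\varepsilon}$ solves, eliminates the fast variable and leaves exactly \eqref{3.36}, with initial point $\bar{z}_{0}$ on $M^{\varepsilon}(\omega)$.

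For the shadowing estimate I would transport the bound \eqref{3.35} from the proof of Theorem \ref{Etp} through $T^{\varepsilon}$. As already exploited in the proof of Theorem \ref{yuan}, $(T^{\varepsilon})^{-1}(\theta_{t}\omega,\cdot)$ acts by adding the stationary process $(0,\sigma\eta^{\frac{1}{\varepsilon}}(\theta_{t}\omega))$, which is common to the two orbits and therefore cancels in their difference; hence, with $\hat{\phi}^{\varepsilon}$ the random dynamical system generated by \eqref{3.6},
\[
|\varphi^{\varepsilon}(t,\omega)-\bar{\varphi}^{\varepsilon}(t,\omega)|=|\hat{\phi}^{\varepsilon}(t,\omega,z_{0})-\hat{\phi}^{\varepsilon}(t,\omega,\bar{z}_{0})|,\qquad t\geq0.
\]
By \eqref{3.35}, $\|\hat{\phi}^{\varepsilon}(\cdot,\omega,z_{0})-\hat{\phi}^{\varepsilon}(\cdot,\omega,\bar{z}_{0})\|_{C_{\beta}^{+}}\leq\frac{1}{1-\rho(\varepsilon)}|y_{0}-\bar{y}_{0}|$; recalling that the weight in $\|\cdot\|_{C_{\beta}^{+}}$ is $\beta=-\gamma/\varepsilon$, so that $|\hat{\phi}^{\varepsilon}(t)|\leq e^{-\gamma t/\varepsilon}\|\hat{\phi}^{\varepsilon}\|_{C_{\beta}^{+}}$ for $t\geq0$, and that $|y_{0}-\bar{y}_{0}|\leq|z_{0}-\bar{z}_{0}|$, one obtains
\[
|\varphi^{\varepsilon}(t,\omega)-\bar{\varphi}^{\varepsilon}(t,\omega)|\leq\frac{e^{-\frac{\gamma}{\varepsilon}t}}{1-\rho(\varepsilon)}\,|z_{0}-\bar{z}_{0}|,\qquad t\geq0,
\]
which is the claimed bound once $\rho(\varepsilon)=\frac{\varepsilon K}{\gamma+\varepsilon\gamma_{s}}-\frac{K}{\gamma+\gamma_{f}}$ is substituted from \eqref{rho}; the ``a.s.\ $\omega$'' qualifier is inherited from the $\theta$-invariant full-measure set of Lemma \ref{lemma1}. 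I do not foresee a genuine difficulty: the corollary is essentially an assembly of Theorems \ref{slowmanifold}, \ref{Etp} and \ref{yuan}. The two points that need a little care are confirming that the tracking orbit solves the \emph{coupled} system \eqref{3.2} before invariance is invoked to collapse it onto the graph of $h^{\varepsilon}$, and checking that the forward direction of invariance ($t\geq0$) matches the time range in the claimed estimate.
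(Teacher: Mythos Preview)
Your proposal is correct and is exactly the intended argument: the paper states this corollary without proof, as an immediate consequence of Theorems \ref{slowmanifold}, \ref{Etp} and \ref{yuan}. Your derivation of the reduced equation from invariance of $M^{\varepsilon}$ and your transport of the quantitative bound \eqref{3.35} through the shift $T^{\varepsilon}$ (which cancels in differences) supply precisely the details the paper leaves implicit.
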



\section{Slow Manifolds }\label{Section5}
By the scaling $t\rightarrow\varepsilon t$, the system \eqref{3.6} can be rewritten as
\begin{eqnarray}\label{4.1}
\left\{\begin{array}{l}
d\hat{x}^\varepsilon=\varepsilon S\hat{x}^\varepsilon dt+\varepsilon g_{1}(\hat{x}^\varepsilon,\hat{y}^\varepsilon+\sigma\eta^{\frac{1}{\varepsilon}}(\theta_{\varepsilon t}\omega))dt,\\
d\hat{y}^\varepsilon=F\hat{y}^\varepsilon dt+g_{2}(\hat{x}^\varepsilon,\hat{y}^\varepsilon+\sigma\eta^{\frac{1}{\varepsilon}}(\theta_{\varepsilon t}\omega)dt.
\end{array}\right.
\end{eqnarray}
If we now replace $\eta^{\frac{1}{\varepsilon}}(\theta_{\varepsilon t}\omega)$ by $\xi(\theta_{t}\omega)$, we get the following random dynamical system
\begin{eqnarray}\label{4.3}
\left\{\begin{array}{l}
d\tilde{x}^\varepsilon=\varepsilon S\tilde{x}^\varepsilon dt+\varepsilon g_{1}(\tilde{x}^\varepsilon,\tilde{y}^\varepsilon+\sigma\xi(\theta_{t}\omega))dt,\\
d\tilde{y}^\varepsilon=F\tilde{y}^\varepsilon dt+g_{2}(\tilde{x}^\varepsilon,\tilde{y}^\varepsilon+\sigma\xi(\theta_{t}\omega))dt,
\end{array}\right.
\end{eqnarray}
with solution of which coincides with that of the system \eqref{4.1}  in distribution.

\medskip

The slow manifold of \eqref{4.3} can be constructed in a completely analogous procedure as Section \ref{section4},
so we omit the proof
and immediately state the following theorem.
\begin{theorem}\label{slowmanifold1}
Assume that $(A_{1})-(A_{3})$ hold. Given $(x_{0},y_{0})\in\mathbb{R}^{n}$, if there exists an $\delta$ such that $\varepsilon\in(0,\delta)$ and $(\tilde{x}^{\varepsilon}(0),\tilde{y}^{\varepsilon}(0))=(x_{0},y_{0})$, then the system of integral equations
\begin{equation}\label{4.6}
\left(
  \begin{array}{ccc}
  \tilde{x}^{\varepsilon}(t)\\
  \tilde{y}^{\varepsilon}(t)\\
  \end{array}
\right)
=  \left(\!\!\!
  \begin{array}{ccc}
    & e^{S\varepsilon t}x_{0}+\varepsilon\int_{0}^{t}e^{S\varepsilon(t-s)}g_{1}(\tilde{x}^\varepsilon(s),\tilde{y}^\varepsilon(s)+\sigma\xi(\theta_{s}\omega))ds\\[1ex]
    & \int_{-\infty}^{t}e^{F(t-s)}g_{2}(\tilde{x}^\varepsilon(s),\tilde{y}^\varepsilon(s)+\sigma\xi(\theta_{s}\omega))ds\\
  \end{array}
\!\!\!\right)
\end{equation}
has a unique solution $\tilde{\phi}^{\varepsilon}(t,\omega,(x_{0},y_{0}))=\big(\tilde{x}^{\varepsilon}(t,\omega,(x_{0},y_{0})),\tilde{y}^{\varepsilon}(t,\omega,(x_{0},y_{0}))\big)\in C_{-\gamma}^{-}$. Further, system \eqref{4.3} has a slow manifold
\begin{align}\nonumber
\displaystyle\tilde{\mathcal{M}}^{\varepsilon}(\omega)&=\{(x_{0},y_{0})\in\mathbb{R}^{n}:\tilde{\phi}^{\varepsilon}(\cdot,\omega,(x_{0},y_{0}))\in C_{-\gamma}^{-}\}\\\label{4.5}
   &=\{(x_{0},\tilde{h}^{\varepsilon}(\omega,x_{0})): x_{0}\in\mathbb{R}^{n_{1}}\},
\end{align}
where
\begin{equation}\label{4.9}
\displaystyle \tilde{h}^{\varepsilon}(\omega,x_{0})= \int_{-\infty}^{0}e^{-Fs}g_{2}(\tilde{x}^\varepsilon(s),\tilde{y}^\varepsilon(s)+\sigma\xi(\theta_{s}\omega))ds
\end{equation}
is a Lipschitz continuous function
and Lipschitz constant
\begin{equation}\label{4.8}
\displaystyle {\rm Lip}\tilde{h}^{\varepsilon}(\omega,\cdot)\leq- \frac{K}{(\gamma+\gamma_{f})(1 -\rho(\varepsilon))},\ \omega\in\Omega.
\end{equation}
\end{theorem}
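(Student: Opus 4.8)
The plan is to transcribe, almost line for line, the Lyapunov--Perron construction of Section~\ref{section4}. The only structural differences are that the rescaling $t\mapsto\varepsilon t$ (i) removes the prefactor $1/\varepsilon$ in front of the fast integral, (ii) inserts a prefactor $\varepsilon$ in front of the slow drift and replaces $S$ by $\varepsilon S$ in the exponentials, and (iii) changes the weight exponent from $\beta=-\gamma/\varepsilon$ to $\beta=-\gamma$; accordingly all Banach spaces are now $C_{-\gamma}^{-}=C_{-\gamma}^{s,-}\times C_{-\gamma}^{f,-}$ (and $C_{-\gamma}^{+}$ for the tracking part). Since $\eta^{1/\varepsilon}(\theta_{\varepsilon t}\omega)$ has been replaced by $\xi(\theta_{t}\omega)$ (legitimate in distribution by Lemma~\ref{same distribution}), \eqref{4.3} is a random differential equation with globally Lipschitz drift by $(A_{2})$, hence generates a random dynamical system $\tilde\phi^{\varepsilon}$; moreover $t\mapsto\xi(\theta_{t}\omega)$ has a sublinear growth rate, as used in the proof of Theorem~\ref{yuan}.

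First I would establish the analogue of Lemma~\ref{slow}: $(x_{0},y_{0})\in\tilde{\mathcal M}^{\varepsilon}(\omega)$ if and only if \eqref{4.6} has a solution $\tilde\phi^{\varepsilon}(\cdot)=(\tilde x^{\varepsilon}(\cdot),\tilde y^{\varepsilon}(\cdot))\in C_{-\gamma}^{-}$ with $\tilde\phi^{\varepsilon}(0)=(x_{0},y_{0})$. The only delicate point is passing from the variation-of-constants identity for the fast component, $\tilde y^{\varepsilon}(t)=e^{F(t-u)}\tilde y^{\varepsilon}(u)+\int_{u}^{t}e^{F(t-s)}g_{2}(\cdots)\,ds$, to the pull-back form with lower limit $-\infty$: using $(A_{1})$ and the choice $\gamma+\gamma_{f}<-K<0$ one gets $|e^{F(t-u)}\tilde y^{\varepsilon}(u)|\le e^{\gamma_{f}t}e^{-(\gamma+\gamma_{f})u}\|\tilde y^{\varepsilon}\|_{C_{-\gamma}^{f,-}}\to 0$ as $u\to-\infty$, which kills the boundary term.

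Then I would introduce the operators
\[
\tilde{\mathcal I}_{s}^{\varepsilon}(\tilde\phi^{\varepsilon})[t]=e^{S\varepsilon t}x_{0}+\varepsilon\int_{0}^{t}e^{S\varepsilon(t-s)}g_{1}\big(\tilde x^{\varepsilon}(s),\tilde y^{\varepsilon}(s)+\sigma\xi(\theta_{s}\omega)\big)\,ds,
\]
\[
\tilde{\mathcal I}_{f}^{\varepsilon}(\tilde\phi^{\varepsilon})[t]=\int_{-\infty}^{t}e^{F(t-s)}g_{2}\big(\tilde x^{\varepsilon}(s),\tilde y^{\varepsilon}(s)+\sigma\xi(\theta_{s}\omega)\big)\,ds,
\]
and $\tilde{\mathcal I}^{\varepsilon}=(\tilde{\mathcal I}_{s}^{\varepsilon},\tilde{\mathcal I}_{f}^{\varepsilon})$, and show that it maps $C_{-\gamma}^{-}$ into itself and is a contraction there. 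Using $(A_{1})$--$(A_{2})$, the slow part produces the integral $\varepsilon\int_{t}^{0}e^{(\gamma+\varepsilon\gamma_{s})(t-s)}\,ds\le\varepsilon/(\gamma+\varepsilon\gamma_{s})$ and the fast part the integral $\int_{-\infty}^{t}e^{(\gamma+\gamma_{f})(t-s)}\,ds=-1/(\gamma+\gamma_{f})$, so that
\[
\|\tilde{\mathcal I}^{\varepsilon}(\tilde\phi_{1}^{\varepsilon})-\tilde{\mathcal I}^{\varepsilon}(\tilde\phi_{2}^{\varepsilon})\|_{C_{-\gamma}^{-}}\le\rho(\varepsilon)\,\|\tilde\phi_{1}^{\varepsilon}-\tilde\phi_{2}^{\varepsilon}\|_{C_{-\gamma}^{-}}
\]
with exactly the same $\rho(\varepsilon)=\frac{\varepsilon K}{\gamma+\varepsilon\gamma_{s}}-\frac{K}{\gamma+\gamma_{f}}$ as in \eqref{rho}, which is $<1$ for $\varepsilon\in(0,\delta)$ by $(A_{3})$. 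The contraction mapping theorem then gives the unique fixed point $\tilde\phi^{\varepsilon}(\cdot,\omega,(x_{0},y_{0}))\in C_{-\gamma}^{-}$, and \eqref{4.9} defines the graph map $\tilde h^{\varepsilon}(\omega,x_{0})=\tilde{\mathcal I}_{f}^{\varepsilon}(\tilde\phi^{\varepsilon})[0]$. Repeating the argument of Lemma~\ref{lemma4} for the dependence on the slow initial datum yields $\|\tilde\phi^{\varepsilon}(\cdot,\omega,x_{0})-\tilde\phi^{\varepsilon}(\cdot,\omega,x_{0}')\|_{C_{-\gamma}^{-}}\le|x_{0}-x_{0}'|/(1-\rho(\varepsilon))$, and inserting this into the fast integral defining $\tilde h^{\varepsilon}$ gives the Lipschitz bound \eqref{4.8}. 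Measurability of $\tilde{\mathcal M}^{\varepsilon}$ as a random set follows from \cite[Lemma~2.1]{SS08}, its positive invariance under $\tilde\phi^{\varepsilon}$ from the cocycle property together with $\tilde\phi^{\varepsilon}(\cdot,\omega,(x_{0},y_{0}))\in C_{-\gamma}^{-}$ exactly as in Theorem~\ref{slowmanifold}, and the exponential tracking property (which upgrades $\tilde{\mathcal M}^{\varepsilon}$ to a genuine slow manifold) from the second Lyapunov--Perron fixed point argument of Theorem~\ref{Etp}, now carried out in $C_{-\gamma}^{+}$.

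The step I expect to be the main obstacle is the $\varepsilon$-bookkeeping in the contraction estimate: one must verify that after the rescaling the extra factor $\varepsilon$ in the slow drift and the removal of $1/\varepsilon$ in the fast drift conspire to reproduce \emph{precisely} the function $\rho(\varepsilon)$ of \eqref{rho} (rather than an $\varepsilon$-dependent bound that could fail to stay below $1$), and that $\beta=-\gamma$, not $-\gamma/\varepsilon$, is the weight that makes all the tails $\int_{-\infty}^{0}e^{(\gamma+\gamma_{f})s}\,ds$ and $\int_{t}^{0}e^{(\gamma+\varepsilon\gamma_{s})(t-s)}\,ds$ converge uniformly in small $\varepsilon$. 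Everything else is a direct transcription of Lemmas~\ref{slow}--\ref{lemma4} and Theorems~\ref{slowmanifold}--\ref{Etp}.
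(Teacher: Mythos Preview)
Your proposal is correct and matches the paper's approach exactly: the paper states that the slow manifold of \eqref{4.3} ``can be constructed in a completely analogous procedure as Section~\ref{section4}'' and omits the proof entirely, so your transcription of Lemmas~\ref{slow}--\ref{lemma4} and Theorems~\ref{slowmanifold}--\ref{Etp} with the weight $\beta=-\gamma$ and the rescaled operators is precisely what is intended. Your identification of the $\varepsilon$-bookkeeping (that the same $\rho(\varepsilon)$ from \eqref{rho} reappears) is the one point worth checking, and it goes through as you describe.
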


Now, we give the relationship of $\mathcal{M}^{\varepsilon}(\omega)$ and $\mathcal{\tilde{M}}^{\varepsilon}(\omega)$ as follows.

\begin{lemma}\label{lemma8}
Assume $(A_{1})-(A_{3})$ to be valid. The slow manifold
$\mathcal{M}^{\varepsilon}(\omega)$ (see \eqref{M}) of system \eqref{3.6} is same as the slow manifold
$\mathcal{\tilde{M}}^{\varepsilon}(\omega)$ (see \eqref{4.5}) of system \eqref{4.3} in distribution. That is, for every $x_{0}\in\mathbb{R}^{n_{1}}$,
\begin{equation}\label{4.10}
\hat{h}^{\varepsilon}(\omega,x_{0})\stackrel{d}{=}\tilde{h}^{\varepsilon}(\omega,x_{0}).
\end{equation}
\end{lemma}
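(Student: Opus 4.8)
The plan is to exhibit both $\hat h^{\varepsilon}(\omega,x_{0})$ and $\tilde h^{\varepsilon}(\omega,x_{0})$ as the image, under one and the same deterministic measurable map, of a path-valued random variable, and then to invoke Lemma \ref{same distribution} at the level of paths. First I would rescale time in the fixed-point problem behind Theorem \ref{slowmanifold}. Putting $s=\varepsilon r$ in the integral system \eqref{3.9} and in \eqref{3.20}, and writing $(\hat X^{\varepsilon}(r),\hat Y^{\varepsilon}(r)):=(\hat x^{\varepsilon}(\varepsilon r),\hat y^{\varepsilon}(\varepsilon r))$, one checks that $(\hat X^{\varepsilon},\hat Y^{\varepsilon})\in C_{-\gamma}^{-}$ solves exactly the integral system \eqref{4.6} with $\xi(\theta_{s}\omega)$ replaced by $\eta^{\frac{1}{\varepsilon}}(\theta_{\varepsilon s}\omega)$, and that
\[
\hat h^{\varepsilon}(\omega,x_{0})=\int_{-\infty}^{0}e^{-Fr}g_{2}\big(\hat X^{\varepsilon}(r),\hat Y^{\varepsilon}(r)+\sigma\eta^{\frac{1}{\varepsilon}}(\theta_{\varepsilon r}\omega)\big)\,dr .
\]
This is just the statement that the graph $\mathcal{M}^{\varepsilon}(\omega)$ is unchanged by the time scaling $t\to\varepsilon t$ that turns \eqref{3.6} into \eqref{4.1}.

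Next I would package the construction into a deterministic operator. For a c\`adl\`ag path $w:(-\infty,0]\to\mathbb{R}^{n_{2}}$ of at most sublinear growth, let $\mathcal{G}(w)\in\mathbb{R}^{n_{2}}$ denote $\int_{-\infty}^{0}e^{-Fr}g_{2}(X(r),Y(r)+\sigma w(r))\,dr$, where $(X,Y)\in C_{-\gamma}^{-}$ is the unique fixed point of the Lyapunov--Perron map obtained from \eqref{4.6} by substituting $w$ for $\xi(\theta_{\cdot}\omega)$. The contraction estimate already used in the proofs of Theorem \ref{slowmanifold} and Theorem \ref{slowmanifold1}, whose contraction constant $\rho(\varepsilon)$ is uniform in $w$, shows that this fixed point exists for $\varepsilon\in(0,\delta)$ and, being the limit of Picard iterates each of which is a measurable (in fact Lipschitz) functional of $w$ in the Skorokhod topology, depends $\mathcal{B}$-measurably on $w$; hence $\mathcal{G}$ is a well-defined measurable map. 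By the first step and by \eqref{4.9} we obtain the representations
\[
\hat h^{\varepsilon}(\omega,x_{0})=\mathcal{G}\big((\eta^{\frac{1}{\varepsilon}}(\theta_{\varepsilon r}\omega))_{r\le 0}\big),\qquad \tilde h^{\varepsilon}(\omega,x_{0})=\mathcal{G}\big((\xi(\theta_{r}\omega))_{r\le 0}\big).
\]

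Finally I would upgrade Lemma \ref{same distribution} to a path-space identity. The self-similarity $L^{\alpha}_{ct}\stackrel{d}{=}c^{1/\alpha}L^{\alpha}_{t}$ holds for the whole process, so the computation in the proof of Lemma \ref{same distribution} gives $(\eta^{\frac{1}{\varepsilon}}(\theta_{t}\omega))_{t\in\mathbb{R}}\stackrel{d}{=}(\xi(\theta_{t/\varepsilon}\omega))_{t\in\mathbb{R}}$ as random elements of the Skorokhod space, and applying the deterministic change of variable $t\mapsto\varepsilon t$ yields $(\eta^{\frac{1}{\varepsilon}}(\theta_{\varepsilon r}\omega))_{r\le 0}\stackrel{d}{=}(\xi(\theta_{r}\omega))_{r\le 0}$. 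Pushing these two equal-in-distribution path-valued random variables through the measurable map $\mathcal{G}$ gives $\hat h^{\varepsilon}(\omega,x_{0})\stackrel{d}{=}\tilde h^{\varepsilon}(\omega,x_{0})$, which is \eqref{4.10}.

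I expect the main obstacle to lie in the second and third steps: one must verify carefully that the Lyapunov--Perron fixed point is genuinely a measurable functional of the driving path in an appropriate topology (the uniformity of $\rho(\varepsilon)$ in $w$ makes this routine but must be stated), and one must confirm that self-similarity transfers from $L^{\alpha}$ to the stationary integrals $\int_{-\infty}^{\cdot}e^{F(\cdot-s)/\varepsilon}\,dL^{\alpha}_{s}$ at the level of processes, not merely of one-dimensional marginals. Once these two points are secured, the conclusion is immediate.
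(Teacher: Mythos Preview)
Your proposal is correct and follows essentially the same route as the paper: rescale time $s\mapsto\varepsilon s$ in \eqref{3.20} to bring $\hat h^{\varepsilon}$ into the form of \eqref{4.9} with $\eta^{1/\varepsilon}(\theta_{\varepsilon\cdot}\omega)$ in place of $\xi(\theta_{\cdot}\omega)$, and then invoke the process-level equality in distribution coming from Lemma~\ref{same distribution}. The paper's own proof compresses this into four lines, writing the third equality as ``$\stackrel{d}{=}$'' and justifying it only by the phrase ``the solution of system \eqref{3.6} coincides with that of system \eqref{4.3} in distribution''; your packaging of the Lyapunov--Perron fixed point as a deterministic measurable functional $\mathcal{G}$ of the driving path, together with the explicit upgrade of Lemma~\ref{same distribution} to a statement about path laws, is precisely the argument needed to make that line rigorous, and the paper leaves it implicit.
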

\begin{proof}
By the scaling $s\to\varepsilon s$ in \eqref{3.20} and the fact that the solution of system \eqref{3.6} coincides with that of system \eqref{4.3} in distribution, for every $x_{0}\in\mathbb{R}^{n_{1}}$,
\begin{align*}
\displaystyle \hat{h}^{\varepsilon}(\omega,x_{0})&=\frac{1}{\varepsilon}\int_{-\infty}^{0}e^{\frac{-Fs}{\varepsilon}}g_{2}(\hat{x}^\varepsilon(s),\hat{y}^\varepsilon(s)+\sigma\eta^{\frac{1}{\varepsilon}}(\theta_{ s}\omega))ds\\
&=\int_{-\infty}^{0}e^{-Fs}g_{2}(\hat{x}^\varepsilon(\varepsilon s),\hat{y}^\varepsilon(\varepsilon s)+\sigma\eta^{\frac{1}{\varepsilon}}(\theta_{\varepsilon s}\omega))ds\\
  &\stackrel{d}{=}\int_{-\infty}^{0}e^{-Fs}g_{2}(\tilde{x}^\varepsilon(s),\tilde{y}^\varepsilon(s)+\sigma\xi(\theta_{s}\omega))ds\\
  &=\tilde{h}^{\varepsilon}(\omega,x_{0})
\end{align*}
which completes the proof.
\end{proof}
\medskip

We are going to study the limiting case of the slow manifold for the system \eqref{3.6} as $\varepsilon\rightarrow0$
and construct an asymptotic approximation of $\mathcal{M}^{\varepsilon}(\omega)$ with sufficiently small $\varepsilon>0$ in distribution.  However, it makes also sense to study \eqref{4.3} for $\varepsilon=0$. In that
case, there exists a slow manifold.

Consider the following system
\begin{eqnarray}\label{4.11}
dx^0(t)=0,~~
dy^0(t)=Fy^0(t)dt+g_{2}(x^0(t),y^0(t)+\sigma\xi(\theta_{t}\omega))dt
\end{eqnarray}
with the initial condition $(x^{0}(0),y^{0}(0))=(x_{0},y_{0})$. As proved in Section \ref{section4}, we
also have the following result. The system \eqref{4.11} has  the following slow  manifold
\begin{equation}\label{4.13}
\mathcal{M}^{0}(\omega)=\{(x_{0},h^{0}(\omega,x_{0})): x_{0}\in\mathbb{R}^{n_{1}}\}
\end{equation}
where
\begin{equation}\label{4.14}
h^{0}(\omega,x_{0})=\int_{-\infty}^{0}e^{-Fs}g_{2}(x_{0},y^{0}(s)+\sigma\xi(\theta_{s}\omega))ds,
\end{equation}
whose Lipschitz constant ${\rm Lip}h^{0}$ satisfies
\begin{equation*}
  \displaystyle {\rm Lip}h^{0}\leq-\frac{K}{\gamma+\gamma_{f}+K},
\end{equation*}
and $y^{0}(t)$ is the unique solution in $C^{f,-}_{-\gamma}$ for integral equation
\begin{equation}\label{4.15}
y^{0}(t)=\int_{-\infty}^{t}e^{F(t-s)}g_{2}(x_{0},y^{0}(s)+\sigma\xi(\theta_{s}\omega))ds,\ t\leq0.
\end{equation}
\begin{remark}
From $0<K<-(\gamma+\gamma_{f})$, we easily have $-(\gamma+\gamma_{f}+K)>0$.
\end{remark}

As we will show, the slow manifold of the system \eqref{3.6} converges to the slow manifold
of the system \eqref{4.11} in distribution. In other words, the distribution of $\mathcal{M}^{\varepsilon}(\omega)$
converges to the distribution of $\mathcal{M}^{0}(\omega)$, as $\varepsilon$ tends to zero.
The slow manifold $\mathcal{M}^{0}(\omega)$ is called the critical manifold for
the system \eqref{3.6}.
\begin{theorem}\label{theorem5}
Assume that $(A_{1})-(A_{3})$ hold and there exists a positive constant $C$
such that $|g_{1}(x,y)|\leq C$. $\mathcal{M}^{\varepsilon}(\omega)$ converges to $\mathcal{M}^{0}(\omega)$
in distribution  as $\varepsilon\to0$. In other words, for $x_{0}\in\mathbb{R}^{n_{1}}$, $\omega\in\Omega$,
\begin{equation}\label{4.16}
\hat{h}^{\varepsilon}(\omega,x_{0})\stackrel{d}{=}h^{0}(\omega,x_{0})+\mathcal{O}(\varepsilon)\ \ \text{in}\ \mathbb{R}^{n_{2}}\ \text{as} \ \varepsilon\rightarrow0.
\end{equation}
\end{theorem}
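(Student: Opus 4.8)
The plan is to reduce \eqref{4.16} to a deterministic (pathwise) estimate for the auxiliary slow manifold and then transfer it back through Lemma \ref{lemma8}. Since $\hat{h}^{\varepsilon}(\omega,x_{0})\stackrel{d}{=}\tilde{h}^{\varepsilon}(\omega,x_{0})$ by Lemma \ref{lemma8}, it suffices to show that $\tilde{h}^{\varepsilon}(\omega,x_{0})=h^{0}(\omega,x_{0})+\mathcal{O}(\varepsilon)$ for every fixed $\omega\in\Omega$, with an $\mathcal{O}(\varepsilon)$-bound that does not depend on $\omega$. Subtracting \eqref{4.14} from \eqref{4.9}, using the Lipschitz bound $(A_{2})$ for $g_{2}$ (the common term $\sigma\xi(\theta_{s}\omega)$ cancels in the Lipschitz estimate), and bounding $|e^{-Fs}|\leq e^{-\gamma_{f}s}$ for $s\leq0$ (which is $(A_{1})$ with $t=-s\geq0$), one gets
\[
|\tilde{h}^{\varepsilon}(\omega,x_{0})-h^{0}(\omega,x_{0})|\leq K\int_{-\infty}^{0}e^{-\gamma_{f}s}\big(|\tilde{x}^{\varepsilon}(s)-x_{0}|+|\tilde{y}^{\varepsilon}(s)-y^{0}(s)|\big)\,ds,
\]
where $(\tilde{x}^{\varepsilon},\tilde{y}^{\varepsilon})$ solves \eqref{4.6} in $C_{-\gamma}^{-}$ with $(\tilde{x}^{\varepsilon}(0),\tilde{y}^{\varepsilon}(0))=(x_{0},y_{0})$ and $y^{0}$ solves \eqref{4.15}. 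At $\varepsilon=0$ the first line of \eqref{4.6} collapses to $\tilde{x}^{0}\equiv x_{0}$ and the second line becomes \eqref{4.15}, so $\tilde{y}^{0}=y^{0}$; the two differences above are therefore the first order corrections that must be controlled.

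The first ingredient I would prove is $|\tilde{x}^{\varepsilon}(s)-x_{0}|\leq C'\varepsilon|s|$ for all $s\leq0$, with $C'$ deterministic. From the slow component of \eqref{4.6},
\[
\tilde{x}^{\varepsilon}(s)-x_{0}=(e^{S\varepsilon s}-I)x_{0}+\varepsilon\int_{0}^{s}e^{S\varepsilon(s-r)}g_{1}\big(\tilde{x}^{\varepsilon}(r),\tilde{y}^{\varepsilon}(r)+\sigma\xi(\theta_{r}\omega)\big)\,dr.
\]
I would bound the first term by writing $e^{S\varepsilon s}-I=S\int_{0}^{\varepsilon s}e^{Su}\,du$ and using $|e^{Su}|\leq1$ for $u\leq0$, giving $|(e^{S\varepsilon s}-I)x_{0}|\leq\|S\|\,\varepsilon|s|\,|x_{0}|$ uniformly in $s\leq0$; and I would bound the integral term using $|e^{S\varepsilon(s-r)}|\leq1$ on $r\in[s,0]$ together with the extra hypothesis $|g_{1}|\leq C$ of the theorem, getting $C\varepsilon|s|$. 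Hence $C'=\|S\|\,|x_{0}|+C$. This is the step where the hypothesis $|g_{1}|\leq C$ is essential: mere Lipschitzness of $g_{1}$ would leave the growing quantities $|\tilde{x}^{\varepsilon}(r)|,|\tilde{y}^{\varepsilon}(r)|,|\xi(\theta_{r}\omega)|$ in the estimate, which would both spoil the $\mathcal{O}(\varepsilon)$ and introduce $\omega$-dependence.

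The second ingredient is $|\tilde{y}^{\varepsilon}(s)-y^{0}(s)|\leq C''\varepsilon e^{-\gamma s}$ for $s\leq0$, with $C''$ deterministic, obtained by a weighted-norm argument. Setting $w^{\varepsilon}:=\tilde{y}^{\varepsilon}-y^{0}\in C_{-\gamma}^{f,-}$, subtracting \eqref{4.15} from the fast component of \eqref{4.6} and using $(A_{1})$--$(A_{2})$ and the first ingredient gives $|w^{\varepsilon}(t)|\leq K\int_{-\infty}^{t}e^{\gamma_{f}(t-s)}\big(C'\varepsilon|s|+|w^{\varepsilon}(s)|\big)\,ds$. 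Multiplying by $e^{\gamma t}$ and using $\sup_{r\leq0}e^{\gamma r}|r|<\infty$ together with $\int_{-\infty}^{t}e^{(\gamma+\gamma_{f})(t-s)}\,ds=\tfrac{1}{-(\gamma+\gamma_{f})}$ (finite since $\gamma+\gamma_{f}<-K<0$ by the choice of $\gamma$), I would arrive at
\[
\|w^{\varepsilon}\|_{C_{-\gamma}^{f,-}}\leq\frac{-K}{\gamma+\gamma_{f}}\,\|w^{\varepsilon}\|_{C_{-\gamma}^{f,-}}+\mathcal{O}(\varepsilon),
\]
and since $0<\frac{-K}{\gamma+\gamma_{f}}<1$ this absorbs into the left side, giving $\|w^{\varepsilon}\|_{C_{-\gamma}^{f,-}}=\mathcal{O}(\varepsilon)$, i.e. $|w^{\varepsilon}(s)|\leq C''\varepsilon e^{-\gamma s}$.

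Finally, plugging the two ingredients into the displayed bound for $|\tilde{h}^{\varepsilon}(\omega,x_{0})-h^{0}(\omega,x_{0})|$ and using that $\int_{-\infty}^{0}e^{-\gamma_{f}s}|s|\,ds<\infty$ and $\int_{-\infty}^{0}e^{-(\gamma_{f}+\gamma)s}\,ds<\infty$ (both finite since $\gamma_{f}<0$ and $\gamma_{f}+\gamma<-K<0$), one obtains $|\tilde{h}^{\varepsilon}(\omega,x_{0})-h^{0}(\omega,x_{0})|\leq C\varepsilon$ with $C$ independent of $\omega$, and then Lemma \ref{lemma8} yields \eqref{4.16}. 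I expect the main obstacle to be the $\tilde{x}^{\varepsilon}$-estimate: since the eigenvalues of $S$ have only non-negative real part, the correction $(e^{S\varepsilon s}-I)x_{0}$ is not uniformly small on $(-\infty,0]$, so one must convert it into the quantity $\varepsilon|s|$, which is harmless only because it is integrated against the exponentially decaying kernel $e^{-\gamma_{f}s}$; and it is precisely here that one is forced to impose $|g_{1}|\leq C$ so that every constant above is deterministic, which is what makes the statement ``equal in distribution up to a deterministic $\mathcal{O}(\varepsilon)$'' meaningful.
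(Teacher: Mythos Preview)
Your proof is correct and follows essentially the same route as the paper: reduce via Lemma~\ref{lemma8} to a pathwise estimate for $\tilde{h}^{\varepsilon}-h^{0}$, bound $|\tilde{x}^{\varepsilon}(s)-x_{0}|$ using the boundedness of $g_{1}$, feed this into a $C_{-\gamma}^{f,-}$-norm estimate for $\tilde{y}^{\varepsilon}-y^{0}$, and then integrate. The only difference is cosmetic: the paper bounds $|\tilde{x}^{\varepsilon}(t)-x_{0}|\leq C_{1}(1-e^{\varepsilon\gamma_{s}t})$ (uniformly bounded in $t$), whereas you use the slightly coarser $C'\varepsilon|t|$, which still integrates against $e^{-\gamma_{f}s}$ and against $e^{(\gamma+\gamma_{f})(t-s)}$ to give the same $\mathcal{O}(\varepsilon)$.
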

\begin{proof}
Applying Lemma \ref{lemma8}, to prove \eqref{4.16}, we can alternatively check that if
\begin{equation}\label{4.17}
\tilde{h}^{\varepsilon}(\omega,x_{0})\stackrel{d}{\longrightarrow}h^{0}(\omega,x_{0})\ \ \text{in}\ \mathbb{R}^{n_{2}}\ \text{as} \ \varepsilon\rightarrow0.
\end{equation}

From \eqref{4.9} and \eqref{4.14}, for sufficiently small $\varepsilon$, we have
\begin{equation}\label{4.18}
\begin{array}{rl}
&|\tilde{h}^{\varepsilon}(\omega,x_{0})-h^{0}(\omega,x_{0})|\\[1ex]
=&\displaystyle\Big|\int_{-\infty}^{0}e^{-Fs}\big[g_{2}(\tilde{x}^\varepsilon(s),\tilde{y}^\varepsilon(s)+\sigma\xi(\theta_{s}\omega))-g_{2}(x_{0},y^{0}(s)+\sigma\xi(\theta_{s}\omega))\big]ds\Big|\\[2ex]
\leq&\displaystyle K\int_{-\infty}^{0}e^{-\gamma_{f}s}\big(|\tilde{x}^{\varepsilon}(s)-x_{0}|+|\tilde{y}^{\varepsilon}(s)-y^{0}(s)|\big)ds.
\end{array}
\end{equation}
According to \eqref{4.6}, for $t\leq0$, it follows that
\begin{equation}\label{4.19}
\begin{array}{rl}
|\tilde{x}^{\varepsilon}(t)-x_{0}|=&\displaystyle\Big|e^{S\varepsilon t}x_{0}-x_{0}+\varepsilon\int_{0}^{t}e^{S\varepsilon(t-s)}g_{1}(\tilde{x}^\varepsilon(s),\tilde{y}^\varepsilon(s)+\sigma\xi(\theta_{s}\omega))ds\Big|\\[1ex]
\leq&\displaystyle|e^{S\varepsilon t}x_{0}-x_{0}|+\varepsilon\int_{t}^{0}e^{\varepsilon\gamma_{s}(t-s)}|g_{1}(\tilde{x}^\varepsilon(s),\tilde{y}^\varepsilon(s)+\sigma\xi(\theta_{s}\omega))|ds\\[1ex]
\leq&\displaystyle\Big|\int_{\varepsilon t}^{0}Sx_{0}e^{Su}du\Big|+\varepsilon\cdot C\cdot\int_{t}^{0}e^{\varepsilon\gamma_{s}(t-s)}ds\\[1ex]
\leq&\displaystyle|Sx_{0}|\int_{\varepsilon t}^{0}e^{\gamma_{s}u}du+\frac{C}{\gamma_{s}}(1-e^{\varepsilon\gamma_{s}t}):=C_{1}(1-e^{\varepsilon\gamma_{s}t}),
\end{array}
\end{equation}
where $C_{1}=\frac{1}{\gamma_{s}}(|Sx_{0}|+C)$. Using \eqref{4.6} and \eqref{4.15}, it is clear that
{\small\begin{equation}\label{4.20}
\begin{array}{rl}
|\tilde{y}^{\varepsilon}(t)-y^{0}(t)|=&\displaystyle\Big|\int_{-\infty}^{t}e^{F(t-s)}\big[g_{2}(\tilde{x}^\varepsilon(s),\tilde{y}^\varepsilon(s)+\sigma\xi(\theta_{s}\omega))-g_{2}(x_{0},y^{0}(s)+\sigma\xi(\theta_{s}\omega))\big]ds\Big|\\[1ex]
\leq&\displaystyle K\int_{-\infty}^{t}e^{\gamma_{f}(t-s)}\big(|\tilde{x}^{\varepsilon}(s)-x_{0}|+|\tilde{y}^{\varepsilon}(s)-y^{0}(s)|\big)ds\\[1ex]
\leq&\displaystyle K\int_{-\infty}^{t}e^{\gamma_{f}(t-s)}\big[C_{1}(1-e^{\varepsilon\gamma_{s} s})+|\tilde{y}^{\varepsilon}(s)-y^{0}(s)|\big]ds\\[1ex]
=&\displaystyle KC_{1}(\frac{1}{\gamma_{f}-\varepsilon\gamma_{s}}e^{\varepsilon\gamma_{s}t}-\frac{1}{\gamma_{f}})+K\int_{-\infty}^{t}e^{\gamma_{f}(t-s)}|\tilde{y}^{\varepsilon}(s)-y^{0}(s)|ds.
\end{array}
\end{equation}}
Hence, we have
{\small\begin{align}\nonumber
||\tilde{y}^{\varepsilon}-y^{0}||_{C_{-\gamma}^{f,-}}&\leq\sup_{t\in (-\infty,0]}e^{\gamma t}\big[KC_{1}(\frac{1}{\gamma_{f}-\varepsilon\gamma_{s}}e^{\varepsilon\gamma_{s}t}-\frac{1}{\gamma_{f}})+K\int_{-\infty}^{t}e^{\gamma_{f}(t-s)}|\tilde{y}^{\varepsilon}(s)-y^{0}(s)|ds\big]\\\label{4.21}
  &\leq KC_{1}\sup_{t\in (-\infty,0]}q(t,\varepsilon)-\frac{K}{\gamma+\gamma_{f}}||\tilde{y}^{\varepsilon}-y^{0}||_{C_{-\gamma}^{f,-}},
\end{align}}
where
\begin{equation*}
q(t,\varepsilon)=\frac{1}{\gamma_{f}-\varepsilon\gamma_{s}}e^{(\gamma+\varepsilon\gamma_{s})t}-\frac{1}{\gamma_{f}}e^{\gamma t},\ \ t\leq0.
\end{equation*}
Since
\begin{align*}
  \frac{dq(t,\varepsilon)}{dt}&=e^{\gamma t}\left(\frac{\gamma+\varepsilon\gamma_{s}}{\gamma_{f}-\varepsilon\gamma_{s}}e^{\varepsilon\gamma_{s}t}-\frac{\gamma}{\gamma_{f}}\right)
  \geq e^{\gamma t}\left(\frac{\gamma+\varepsilon\gamma_{s}}{\gamma_{f}-\varepsilon\gamma_{s}}-\frac{\gamma}{\gamma_{f}}\right)
 \to0,\ \text{as} \ \varepsilon\to0,
\end{align*}
which implies  $q(t,\varepsilon)$ is increasing with respect to the variable $t$ for  small $\varepsilon>0$.
Then we immediately have
\begin{equation}\label{4.22}
q(t,\varepsilon)\leq q(0,\varepsilon)=\frac{1}{\gamma_{f}-\varepsilon\gamma_{s}}-\frac{1}{\gamma_{f}},\ \ t\leq0.
\end{equation}
According to \eqref{4.21} and \eqref{4.22}, we obtain
\begin{equation*}
||\tilde{y}^{\varepsilon}-y^{0}||_{C_{-\gamma}^{f,-}}\leq C_{2}\big(\frac{1}{\gamma_{f}-\varepsilon\gamma_{s}}-\frac{1}{\gamma_{f}}\big), \ \ \text{with}\ \ C_{2}=\frac{KC_{1}}{1+\frac{K}{\gamma+\gamma_{f}}}.
\end{equation*}
Hence
\begin{equation}\label{4.23}
|\tilde{y}^{\varepsilon}(t)-y^{0}(t)|\leq C_{2}e^{-\gamma t}\big(\frac{1}{\gamma_{f}-\varepsilon\gamma_{s}}-\frac{1}{\gamma_{f}}\big),\ \ t\leq0.
\end{equation}
It follows from \eqref{4.18}, \eqref{4.19} and \eqref{4.23} that
\begin{align*}
&|\tilde{h}^{\varepsilon}(\omega,x_{0})-h^{0}(\omega,x_{0})|\\
  &\leq\displaystyle K\big[C_{1}\int_{-\infty}^{0}e^{-\gamma_{f}s}(1-e^{\varepsilon\gamma_{s}s})ds+ C_{2}\big(\frac{1}{\gamma_{f}-\varepsilon\gamma_{s}}-\frac{1}{\gamma_{f}}\big)\int_{-\infty}^{0}e^{-(\gamma+\gamma_{f})s}ds\big]\\
  &=C_{3}\big(\frac{1}{\gamma_{f}-\varepsilon\gamma_{s}}-\frac{1}{\gamma_{f}}\big)\to0,\ \ \text{as}\ \ \varepsilon\to0,
\end{align*}
where $C_{3}=K\left(C_{1}-\frac{C_{2}}{\gamma+\gamma_{f}}\right)=\frac{K(\gamma+\gamma_{f})(|Sx_{0}|+C)}{\gamma_{s}(\gamma+\gamma_{f}+K)}$.
This completes the proof.
\end{proof}
\begin{theorem}\label{theorem6}
Assume the hypotheses of Theorem \ref{theorem5} to be valid. Then there exists a $\delta$ such that if $\varepsilon\in(0,\delta)$, the slow manifold of system \eqref{3.6} can be approximated in distribution as
\begin{align}\label{4.24}
\mathcal{M}^{\varepsilon}(\omega)
                                 &\stackrel{d}{=}\{(x_{0},h^{0}(\omega,x_{0})+\varepsilon h^{1}(\omega,x_{0})+\mathcal{O}(\varepsilon^{2})): X_{0}\in\mathbb{R}^{n_{1}}\}
\end{align}
where $h^{0}(\omega,x_{0})$ is defined in \eqref{4.14},
\begin{equation}\label{4.35}
h^{1}(\omega,x_{0})=\int_{-\infty}^{0}e^{-Fs}[x^{1}(s)g_{2,x}(x_{0},y^{0}(s)+\sigma\xi(\theta_{s}\omega))
                              +y^{1}(s)g_{2,y}(x_{0},y^{0}(s)+\sigma\xi(\theta_{s}\omega))]ds,
\end{equation}
and $(x^{1}(t),y^{1}(t))$ are given by \eqref{4.35} and \eqref{4.37}.
\end{theorem}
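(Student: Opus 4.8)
The plan is to perform a regular perturbation expansion in $\varepsilon$ for the slow manifold of the rescaled system \eqref{4.3}, and then to transfer the conclusion back to \eqref{3.6} via Lemma \ref{lemma8}. Since $\hat{h}^{\varepsilon}(\omega,x_{0})\stackrel{d}{=}\tilde{h}^{\varepsilon}(\omega,x_{0})$ for every $x_{0}$, it is enough to prove that
\[
\tilde{h}^{\varepsilon}(\omega,x_{0})=h^{0}(\omega,x_{0})+\varepsilon h^{1}(\omega,x_{0})+\mathcal{O}(\varepsilon^{2})\quad\text{in }\mathbb{R}^{n_{2}},
\]
and then read \eqref{4.24} off the graph representation \eqref{4.5}. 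Throughout I work from the fixed-point representation \eqref{4.6} of $(\tilde{x}^{\varepsilon},\tilde{y}^{\varepsilon})$ and from formula \eqref{4.9} for $\tilde{h}^{\varepsilon}$, and --- as the appearance of $g_{2,x}$, $g_{2,y}$ in the statement already forces --- I assume, in addition to $(A_{1})$--$(A_{3})$ and the boundedness $|g_{1}(x,y)|\le C$ of Theorem \ref{theorem5}, that $g_{2}$ is twice continuously differentiable with bounded first and second derivatives.

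First I would substitute the ansatz $\tilde{x}^{\varepsilon}(t)=x^{0}(t)+\varepsilon x^{1}(t)+\varepsilon^{2}r_{x}^{\varepsilon}(t)$ and $\tilde{y}^{\varepsilon}(t)=y^{0}(t)+\varepsilon y^{1}(t)+\varepsilon^{2}r_{y}^{\varepsilon}(t)$ into \eqref{4.6}, expand $e^{S\varepsilon t}=I+\varepsilon St+\mathcal{O}(\varepsilon^{2})$, Taylor-expand $g_{1}$ and $g_{2}$ about the point $(x_{0},y^{0}(s)+\sigma\xi(\theta_{s}\omega))$, and collect powers of $\varepsilon$. The order-$\varepsilon^{0}$ identities give $x^{0}(t)\equiv x_{0}$ together with the integral equation \eqref{4.15} for $y^{0}$. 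The order-$\varepsilon^{1}$ identities give the explicit slow correction
\[
x^{1}(t)=Stx_{0}+\int_{0}^{t}g_{1}\big(x_{0},y^{0}(s)+\sigma\xi(\theta_{s}\omega)\big)\,ds
\]
and the linear Volterra equation
\[
y^{1}(t)=\int_{-\infty}^{t}e^{F(t-s)}\big[x^{1}(s)\,g_{2,x}(x_{0},y^{0}(s)+\sigma\xi(\theta_{s}\omega))+y^{1}(s)\,g_{2,y}(x_{0},y^{0}(s)+\sigma\xi(\theta_{s}\omega))\big]\,ds,
\]
which are exactly the equations determining $(x^{1},y^{1})$ referred to in the statement. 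The forcing term of the $y^{1}$-equation grows at most linearly in $|t|$ because $x^{1}$ does, and therefore lies in $C_{-\gamma}^{f,-}$; the associated linear operator has norm $-K/(\gamma+\gamma_{f})<1$ by $(A_{3})$, so a contraction argument of the same type as in Lemma \ref{lemma3} yields a unique $y^{1}\in C_{-\gamma}^{f,-}$, and then $h^{1}$ in \eqref{4.35} is well defined since $e^{-Fs}$ decays exponentially as $s\to-\infty$.

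The main obstacle is the rigorous control of the remainders $r_{x}^{\varepsilon}$ and $r_{y}^{\varepsilon}$, that is, showing that $\|r_{x}^{\varepsilon}\|_{C_{-\gamma}^{s,-}}+\|r_{y}^{\varepsilon}\|_{C_{-\gamma}^{f,-}}$ is bounded uniformly for $\varepsilon\in(0,\delta)$. For this I would write down the coupled integral equations satisfied by $(r_{x}^{\varepsilon},r_{y}^{\varepsilon})$ derived from \eqref{4.6}: the quadratic Taylor remainders of $g_{1},g_{2}$, together with the $\mathcal{O}(\varepsilon^{2})$ part of $e^{S\varepsilon t}$ and the a priori bounds $|\tilde{x}^{\varepsilon}(t)-x_{0}|\le C_{1}(1-e^{\varepsilon\gamma_{s}t})$ from \eqref{4.19} and $|\tilde{y}^{\varepsilon}(t)-y^{0}(t)|\le C_{2}e^{-\gamma t}\big(\tfrac{1}{\gamma_{f}-\varepsilon\gamma_{s}}-\tfrac{1}{\gamma_{f}}\big)$ from \eqref{4.23} established in the proof of Theorem \ref{theorem5}, supply an inhomogeneity that is $\mathcal{O}(1)$ after dividing by $\varepsilon^{2}$; one then closes the estimate exactly as in Lemma \ref{lemma3} and Theorem \ref{theorem5}, the spectral gap $K<-(\gamma+\gamma_{f})$ again providing a contraction factor bounded away from $1$ uniformly in $\varepsilon$. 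Here the boundedness of $g_{1}$, the boundedness of $D^{2}g_{2}$, and the exponential decay of $e^{-Fs}$ and of $e^{F(t-s)}$ are what guarantee the integrability over the half-line, just as in \eqref{4.19}.

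Finally, plugging the validated expansion into \eqref{4.9} and Taylor-expanding $g_{2}$ once more about $(x_{0},y^{0}(s)+\sigma\xi(\theta_{s}\omega))$, the zeroth-order term reproduces $h^{0}$ in \eqref{4.14}, the first-order term reproduces $h^{1}$ in \eqref{4.35}, and the remaining contributions are $\mathcal{O}(\varepsilon^{2})$ by the uniform bound on $(r_{x}^{\varepsilon},r_{y}^{\varepsilon})$ together with $\int_{-\infty}^{0}e^{-\gamma_{f}s}(\,\cdot\,)\,ds<\infty$. This gives $\tilde{h}^{\varepsilon}(\omega,x_{0})=h^{0}(\omega,x_{0})+\varepsilon h^{1}(\omega,x_{0})+\mathcal{O}(\varepsilon^{2})$ pointwise in $x_{0}$; combining it with $\hat{h}^{\varepsilon}\stackrel{d}{=}\tilde{h}^{\varepsilon}$ from Lemma \ref{lemma8} and the graph representation \eqref{M} of $\mathcal{M}^{\varepsilon}(\omega)$ yields \eqref{4.24}.
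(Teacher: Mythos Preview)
Your proposal is correct and follows essentially the same route as the paper: reduce via Lemma~\ref{lemma8} to $\tilde h^{\varepsilon}$, insert a regular perturbation ansatz for $(\tilde x^{\varepsilon},\tilde y^{\varepsilon})$, Taylor-expand $g_2$, match powers of $\varepsilon$ to identify $(x^0,y^0)$ with the critical-manifold data and to derive the equations for $(x^1,y^1)$, and finally read off $h^0+\varepsilon h^1+\mathcal{O}(\varepsilon^2)$ from \eqref{4.9}.

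The only substantive difference is that the paper carries out the matching at the level of the differential equations \eqref{4.3}, obtaining the ODE \eqref{4.33} for $y^1$ and the explicit formula \eqref{4.37}, and treats the $\mathcal{O}(\varepsilon^2)$ remainder purely formally; you instead work directly with the integral equations \eqref{4.6}, obtain $y^1$ as the fixed point of a linear Volterra equation in $C_{-\gamma}^{f,-}$, and sketch a genuine remainder estimate using the bounds \eqref{4.19} and \eqref{4.23} from Theorem~\ref{theorem5}. Your version is therefore more rigorous (and you are right that the appearance of $g_{2,x},g_{2,y}$ and the quadratic remainder force extra smoothness on $g_2$ that the paper leaves implicit), but the underlying idea and computation are the same.
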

\begin{proof}
Applying Lemma \ref{lemma8}, we can alternatively prove
\begin{equation}\label{4.25}
\tilde{h}^{\varepsilon}(\omega,x_{0})=h^{0}(\omega,x_{0})+\varepsilon h^{1}(\omega,x_{0})+\mathcal{O}(\varepsilon^{2})
\end{equation}
For the system \eqref{4.3}, we write
\begin{eqnarray}\label{4.26}
\begin{array}{l}
\displaystyle \tilde{x}^{\varepsilon}(t)=\tilde{x}^{0}(t)+\varepsilon x^{1}(t)+\mathcal{O}(\varepsilon^{2}),~~\tilde{x}^{\varepsilon}(0)=x_{0},\\[1ex]
\displaystyle \tilde{y}^{\varepsilon}(t)=\tilde{y}^{0}(t)+\varepsilon y^{1}(t)+\mathcal{O}(\varepsilon^{2}),~~\tilde{y}^{\varepsilon}(0)=y_{0},
\end{array}
\end{eqnarray}
where $\tilde{x}^{0}(t), \tilde{y}^{0}(t), x^{1}(t)$ and $y^{1}(t)$ will be determined in the below.
The Taylor expansions of $g_{i}(\tilde{x}^\varepsilon(t),\tilde{y}^\varepsilon(t)+\sigma\xi(\theta_{t}\omega)), i=1, 2$  at point $(\tilde{x}^0(t),\tilde{y}^0(t)+\sigma\xi(\theta_{t}\omega))$ are as follows.
\begin{equation}\label{4.28}
\begin{array}{rl}
&g_{i}(\tilde{x}^\varepsilon(t),\tilde{y}^\varepsilon(t)+\sigma\xi(\theta_{t}\omega))\\[1ex]
=&g_{i}(\tilde{x}^0(t),\tilde{y}^0(t)+\sigma\xi(\theta_{t}\omega))+(\tilde{x}^\varepsilon(t)-\tilde{x}^0(t))g_{i,x}(\tilde{x}^0(t),\tilde{y}^0(t)+\sigma\xi(\theta_{t}\omega))\\[1ex]
&+(\tilde{y}^\varepsilon(t)-\tilde{y}^0(t))g_{i,y}(\tilde{x}^0(t),\tilde{y}^0(t)+\sigma\xi(\theta_{t}\omega))+\mathcal{O}(\varepsilon^{2})\\[1ex]
=&g_{i}(\tilde{x}^0(t),\tilde{y}^0(t)+\sigma\xi(\theta_{t}\omega))+\varepsilon x^{1}(t)g_{i,x}(\tilde{x}^0(t),\tilde{y}^0(t)+\sigma\xi(\theta_{t}\omega))\\[1ex]
&+\varepsilon y^{1}(t)g_{i,y}(\tilde{x}^0(t),\tilde{y}^0(t)+\sigma\xi(\theta_{t}\omega))+\mathcal{O}(\varepsilon^{2}),
\end{array}
\end{equation}
where $g_{i,x}(x,y)$ and $g_{i,y}(x,y)$ denote the partial derivative of $g_{i}(x,y)$ with respect to the  variables $x$ and $y$ respectively.

Substituting \eqref{4.26} into \eqref{4.3}, equating the terms with the same power of $\varepsilon$, we deduce that
\begin{align}\label{4.30}
  d\tilde{x}^{0}(t)&=0\\\label{4.31}
  dx^{1}(t)&=S\tilde{x}^{0}(t)dt+g_{1}(\tilde{x}^0(t),\tilde{y}^0(t)+\sigma\xi(\theta_{t}\omega))dt
\end{align}
and
\begin{align}\label{4.32}
  d\tilde{y}^{0}(t)&=F\tilde{y}^{0}(t)dt+g_{2}(\tilde{x}^0(t),\tilde{y}^0(t)+\sigma\xi(\theta_{t}\omega))dt\\ \nonumber
    dy^{1}(t)&=x^{1}(t)g_{2,x}(\tilde{x}^0(t),\tilde{y}^0(t)+\sigma\xi(\theta_{t}\omega))dt\\\label{4.33}
&\ \ +y^{1}(t)[F+g_{2,y}(\tilde{x}^0(t),\tilde{y}^0(t)+\sigma\xi(\theta_{t}\omega))]dt.
\end{align}
Comparing \eqref{4.11}    with \eqref{4.30} and \eqref{4.32}, we immediately have
\begin{equation}\label{4.34}
x^{0}(t)=\tilde{x}^{0}(t),\ \ y^{0}(t)=\tilde{y}^{0}(t),
\end{equation}
which implies that the system \eqref{4.11} essentially is the system \eqref{3.6} scaled by $\varepsilon t$ with zero singular perturbation parameter, i.e., the system \eqref{4.3} with $\varepsilon=0$.
From \eqref{4.31} and $\tilde{x}^{0}(0)=x_{0}$, we get
\begin{equation}\label{4.35}
x^{1}(t)=Sx_{0}t+\int_{0}^{t}g_{1}(x_{0},y^{0}(s)+\sigma\xi(\theta_{s}\omega))dt.
\end{equation}
According to \eqref{4.32}, \eqref{4.34} and $\tilde{y}^{0}(0)=h^{0}(\omega,x_{0})$, we obtain
\begin{equation}\label{4.36}
\tilde{y}^{0}(t)=e^{Ft}h^{0}(\omega,x_{0})+\int_{0}^{t}e^{-F(s-t)}g_{_{2}}(x_{0},\tilde{y}^{0}(s)+\sigma\xi(\theta_{s}\omega))ds.
\end{equation}
By \eqref{4.33}-\eqref{4.35} and $\tilde{y}^{1}(0)=h^{1}(\omega,x_{0})$, we have
\begin{align}\nonumber
y^{1}(t)&=e^{Ft+\int_{0}^{t}g_{2,y}(x_{0},\tilde{y}^{0}(s)+\sigma\xi(\theta_{s}\omega))ds}h^{1}(\omega,x_{0})+\int_{0}^{t}e^{-F(s-t)+\int_{s}^{t}g_{2,y}(x_{0},\tilde{y}^{0}(u)+\sigma\xi(\theta_{u}\omega))du} \\\label{4.37}
   &\cdot g_{2,x}(x_{0},\tilde{y}^{0}(s)+\sigma\xi(\theta_{s}\omega))\big[Sx_{0}s+\int_{0}^{s}g_{1}(x_{0},y^{0}(u)+\sigma\xi(\theta_{u}\omega))du\big]ds
\end{align}
It follows from \eqref{4.9}, \eqref{4.28} and \eqref{4.34} that
{\small\begin{align*}
   \tilde{h}^{\varepsilon}(\omega,x_{0})=&\displaystyle\int_{-\infty}^{0}e^{-Fs}g_{2}(\tilde{x}^\varepsilon(s),\tilde{y}^\varepsilon(s)+\sigma\xi(\theta_{s}\omega))ds \\
=&\displaystyle\int_{-\infty}^{0}e^{-Fs}[g_{2}(\tilde{x}^0(s),\tilde{y}^0(s)+\sigma\xi(\theta_{s}\omega))+\varepsilon x^{1}(s)g_{2,x}(\tilde{x}^0(s),\tilde{y}^0(s)+\sigma\xi(\theta_{s}\omega)) \\[1ex]
   &\displaystyle+\varepsilon y^{1}(s)g_{2,y}(\tilde{x}^0(s),\tilde{y}^0(t)+\sigma\xi(\theta_{s}\omega))]ds+\mathcal{O}(\varepsilon^{2}) \\[1ex]
=&\displaystyle\int_{-\infty}^{0}e^{-Fs}[g_{2}(x_{0},y^0(s)+\sigma\xi(\theta_{s}\omega))ds+\varepsilon\int_{-\infty}^{0}e^{-Fs}[x^{1}(s)g_{2,x}(x_{0},y^{0}(s)+\sigma\xi(\theta_{s}\omega))\\
   &+y^{1}(s)g_{2,y}(x_{0},y^{0}(s)+\sigma\xi(\theta_{s}\omega))]ds+\mathcal{O}(\varepsilon^{2})\\[1ex]
=&\displaystyle h^{0}(\omega,x_{0})+\varepsilon h^{1}(\omega,x_{0})+\mathcal{O}(\varepsilon^{2}),
\end{align*}}
which conclude the proof.
\end{proof}

\section{Examples }\label{Section6}
Now we present three examples from biological sciences to illustrate our analytical results.
\begin{example}
Consider a two dimensional model of FitzHugh-Nagumo system \cite{Zi08}
\begin{eqnarray}\label{6.1}
\left\{\begin{array}{l}
 dx^\varepsilon=x^\varepsilon dt+\frac{1}{3}\sin y^\varepsilon dt,~~x^\varepsilon\in\mathbb{R}\\
dy^\varepsilon=-\frac{1}{\varepsilon}y^\varepsilon dt+\frac{1}{6\varepsilon}(x^\varepsilon)^{2} dt+\sigma\varepsilon^{-\frac{1}{\alpha}}dL_t^{\alpha},~~y^\varepsilon\in\mathbb{R}
 \end{array}\right.
\end{eqnarray}
where $x^\varepsilon$
is the ``slow" component, $y^\varepsilon$ is the ``fast" component, $\gamma_{s}=1$, $\gamma_{f}=-1$, $K<1$, $g_{1}(x^\varepsilon, y^\varepsilon)=\frac{1}{3}\sin y^\varepsilon$, $g_{2}(x^\varepsilon, y^\varepsilon)=\frac{1}{6}(x^\varepsilon)^{2}$.

The scaling $t\rightarrow\varepsilon t$ in \eqref{6.1} yields
\begin{eqnarray}\label{xin1}
\left\{\begin{array}{l}
 dx^\varepsilon=\varepsilon x^\varepsilon dt+\frac{1}{3}\varepsilon\sin y^\varepsilon dt,\\[1ex]
dy^\varepsilon=-y^\varepsilon dt+\frac{1}{6}(x^\varepsilon)^{2} dt+\sigma dL_t^{\alpha}.
 \end{array}\right.
\end{eqnarray}
We can convert this two dimensional SDE system to the following system
\begin{eqnarray}\label{4.39}
\left\{\begin{array}{l}
d\tilde{x}^\varepsilon=\varepsilon\tilde{x}^\varepsilon dt+\frac{\varepsilon}{3}\sin(\tilde{y}^\varepsilon+\sigma\xi(\theta_{t}\omega))dt, \\[1ex]
d\tilde{y}^\varepsilon=-\tilde{y}^\varepsilon+\frac{1}{6}(\tilde{x}^\varepsilon)^{2},
 \end{array}\right.
\end{eqnarray}
where $\xi(\theta_{t}\omega)=\int_{-\infty}^{t} e^{(t-s)}dL_{s}^{\alpha}$.

Denote $\tilde{x}^\varepsilon(0)=x_{0}$, we get
$h^{0}(\omega,x_{0})=\frac{x^{2}_{0}}{6}$ and
\begin{equation*}
h^{1}(\omega,x_{0})=-\frac{x^{2}_{0}}{3}+\frac{x_{0}}{9}\int_{-\infty}^{0}e^{t}\Big[\int_{0}^{t}\sin\big(\frac{ x^{2}_{0}}{6}+\sigma\int_{-\infty}^{s}e^{s-r}dL_{r}^{\alpha}\big)ds\Big]dt.
\end{equation*}
This produces an approximated slow manifold $\tilde{\mathcal{M}}^{\varepsilon}(\omega)=\{(x_{0},\tilde{h}^{\varepsilon}(\omega,x_{0}): x_{0}\in[0,\pi]\}$ of system \eqref{4.39}, where $\tilde{h}^{\varepsilon}(\omega,x_{0})=h^{0}(\omega,x_{0})+\varepsilon h^{1}(\omega,x_{0})+\mathcal{O}(\varepsilon^{2})$. Then we obtain
\begin{eqnarray}\label{rd1}
\left\{\begin{array}{l}
 d\bar{x}^\varepsilon=\varepsilon \bar{x}^\varepsilon dt+\frac{1}{3}\varepsilon\sin \bar{y}^\varepsilon dt,\\[1ex]
\bar{y}^\varepsilon=\tilde{h}^{\varepsilon}(\theta_{t}\omega,\bar{x}^\varepsilon),
 \end{array}\right.
\end{eqnarray}
which is the reduction system of \eqref{xin1}.
\begin{figure}[H]
\centering
\includegraphics[width=2.92in]{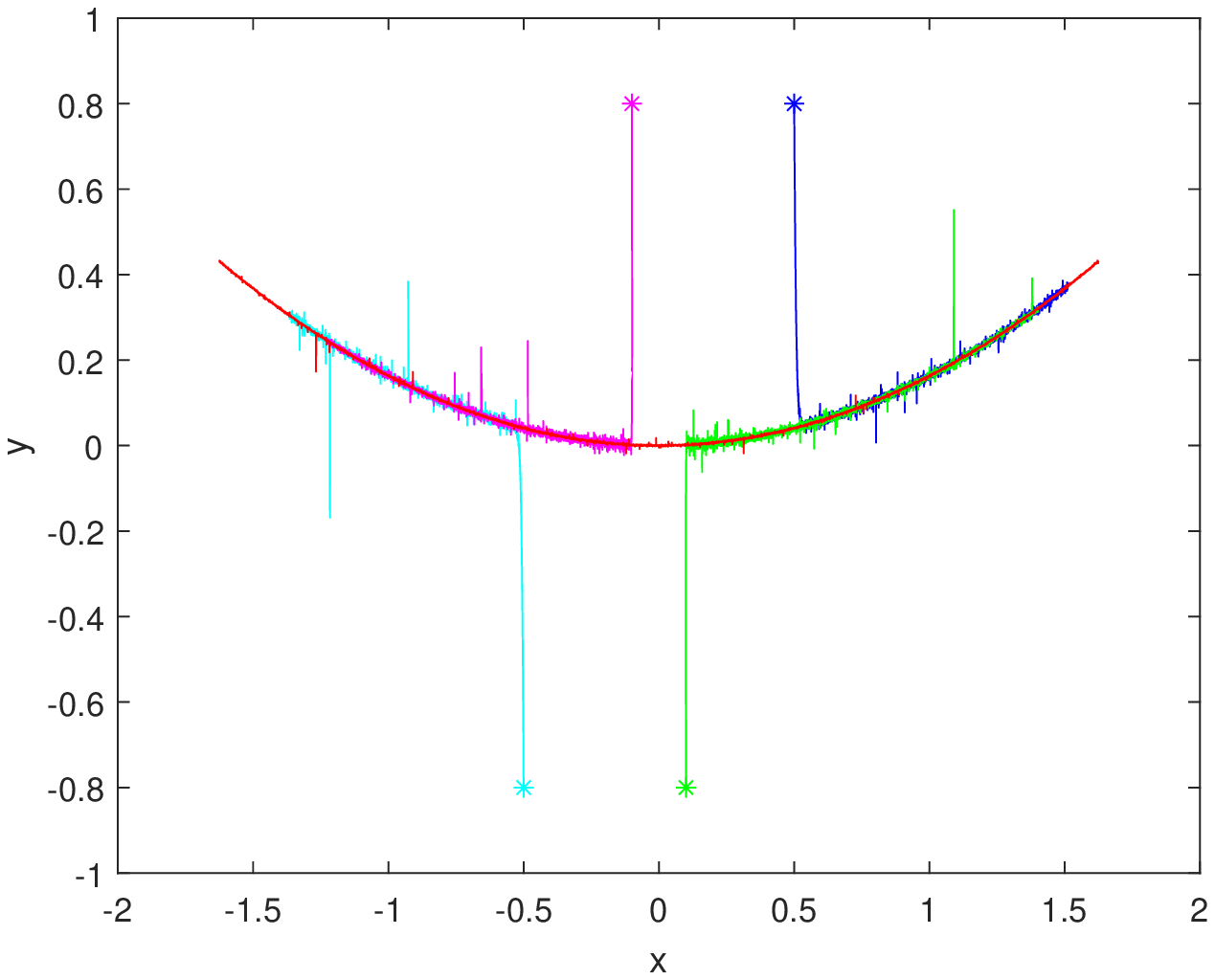}
\includegraphics[width=2.92in]{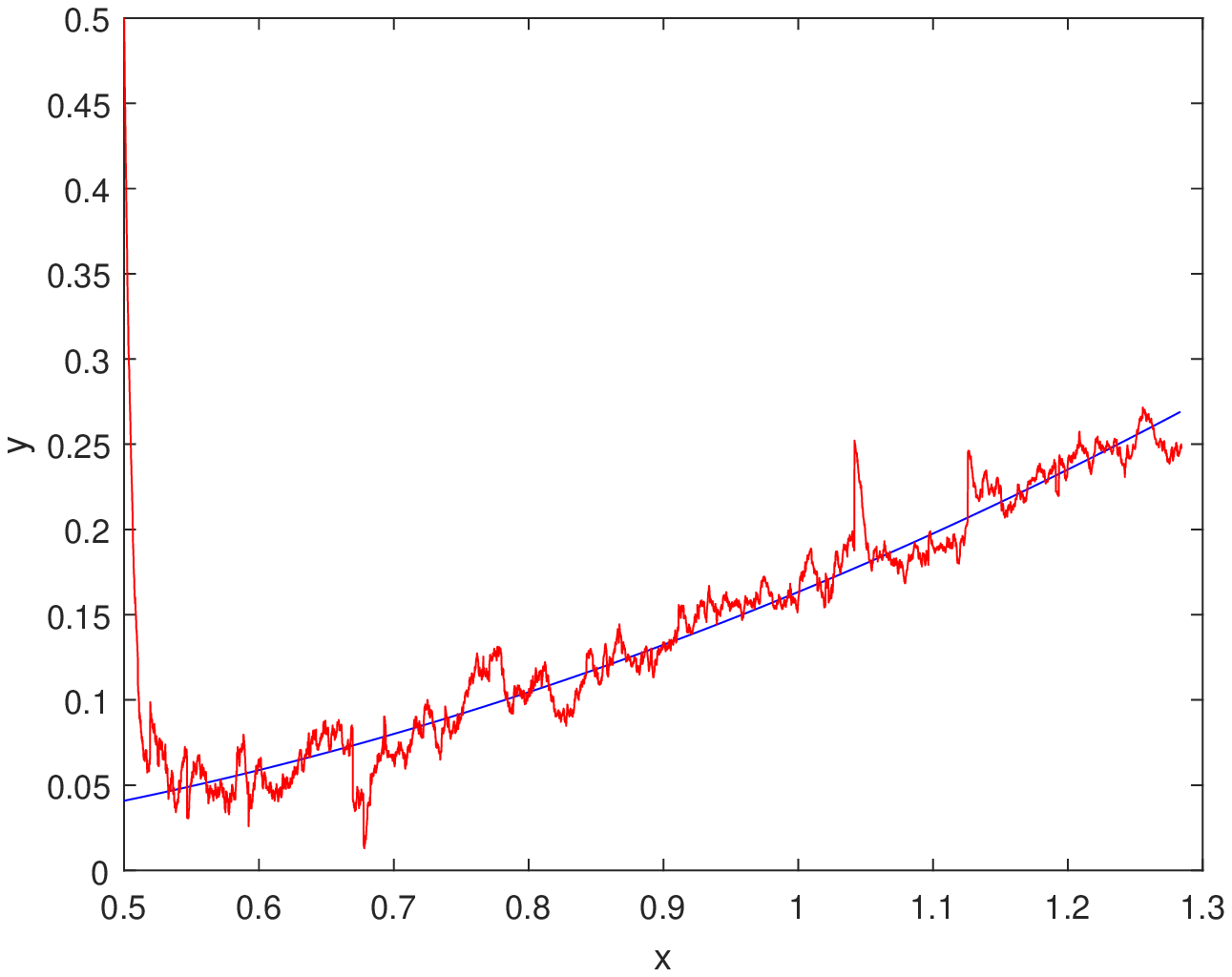}
\caption{(Online color)Four orbits of slow-fast system \eqref{xin1} and its slow manifold expansion $\tilde{h}^{\varepsilon}(\omega,x_{0})$; and orbits for system \eqref{xin1} and slow manifold reduced system \eqref{rd1}: $\varepsilon=0.01$, $\sigma=0.05$ and $\alpha=1.8$.}
\end{figure}
\end{example}

\begin{example}
Consider a three dimensional model of FitzHugh-Nagumo system
\begin{eqnarray}\label{6.3}
\left\{\begin{array}{l}
 dx_{1}^\varepsilon=\frac{1}{2}x_{1}^\varepsilon dt+(-\frac{(x_{1}^\varepsilon)^{3}+x_{1}^\varepsilon x_{2}^\varepsilon}{20}+\frac{y^\varepsilon}{3})dt,~~x_{1}^{\varepsilon}\in\mathbb{R} \\
 dx_{2}^\varepsilon=\frac{1}{3}x_{2}^\varepsilon dt+(\frac{1}{2}\sin x_{1}^\varepsilon\cos x_{2}^\varepsilon+\frac{(y^\varepsilon)^{2}}{8})dt,~~x_{2}^\varepsilon\in\mathbb{R} \\
dy^\varepsilon=-\frac{1}{\varepsilon}y^\varepsilon dt-\frac{1}{10\varepsilon}x_{1}^\varepsilon x_{2}^\varepsilon dt+\sigma\varepsilon^{-\frac{1}{\alpha}}dL_t^{\alpha},~~y^\varepsilon\in\mathbb{R}
 \end{array}\right.
\end{eqnarray}
where $(x_{1}^\varepsilon,x_{2}^\varepsilon)$
is the ``slow" component, $y^\varepsilon$ is the ``fast" component,$\gamma_{s}=\frac{1}{3}$, $\gamma_{f}=-1$, $K<1$, $g_{1}(x_{1}^\varepsilon, x_{2}^\varepsilon, y^\varepsilon)=-\frac{(x_{1}^\varepsilon)^{3}+x_{1}^\varepsilon x_{2}^\varepsilon}{20}+\frac{y^\varepsilon}{3}$, $g_{2}(x_{1}^\varepsilon, x_{2}^\varepsilon, y^\varepsilon)=\frac{1}{2}\sin x_{1}^\varepsilon\cos x_{2}^\varepsilon+\frac{(y^\varepsilon)^{2}}{8}$, $g(x_{1}^\varepsilon, x_{2}^\varepsilon, y^\varepsilon)=-\frac{1}{10}x_{1}^\varepsilon x_{2}^\varepsilon$.

We now scale the time $t\rightarrow\varepsilon t$, \eqref{6.3} can be rewriten as
\begin{eqnarray}\label{xin2}
\left\{\begin{array}{l}
 dx_{1}^\varepsilon=\frac{1}{2}\varepsilon x_{1}^\varepsilon dt+\varepsilon(-\frac{(x_{1}^\varepsilon)^{3}+x_{1}^\varepsilon x_{2}^\varepsilon}{20}+\frac{y^\varepsilon}{3})dt,\\
 dx_{2}^\varepsilon=\frac{1}{3}\varepsilon x_{2}^\varepsilon dt+\varepsilon(\frac{1}{2}\sin x_{1}^\varepsilon\cos x_{2}^\varepsilon+\frac{(y^\varepsilon)^{2}}{8})dt,\\
dy^\varepsilon=-y^\varepsilon dt-\frac{1}{10}x_{1}^\varepsilon x_{2}^\varepsilon dt+\sigma dL_t^{\alpha}.
 \end{array}\right.
\end{eqnarray}
The corresponding system of random differential equations
\begin{eqnarray}\label{6.4}
\left\{\begin{array}{l}
 d\tilde{x}_{1}^\varepsilon=\frac{1}{2}\varepsilon\tilde{x}_{1}^\varepsilon dt+\varepsilon[-\frac{(\tilde{x}_{1}^\varepsilon)^{3}+\tilde{x}_{1}^\varepsilon\tilde{x}_{2}^\varepsilon}{20}+\frac{1}{3}(\tilde{y}^\varepsilon+\sigma\xi(\theta_{t}\omega))]dt \\[1ex]
 d\tilde{x}_{2}^\varepsilon=\frac{1}{3}\varepsilon\tilde{x}_{2}^\varepsilon dt+\varepsilon\big[\frac{1}{2}\sin \tilde{x}_{1}^\varepsilon\cos \tilde{x}_{2}^\varepsilon+\frac{1}{8}(\tilde{y}^\varepsilon+\sigma\xi(\theta_{t}\omega))^{2}\big]dt\\[1ex]
d\tilde{y}^\varepsilon=-\tilde{y}^\varepsilon dt-\frac{1}{10}\tilde{x}_{1}^\varepsilon\tilde{x}_{2}^\varepsilon
  dt
 \end{array}\right.
\end{eqnarray}
where $\xi(\theta_{t}\omega)=\int_{-\infty}^{t} e^{(t-s)}dL_{s}^{\alpha}$.

Denote $(\tilde{x}_{1}^{\varepsilon}(0),\tilde{x}_{2}^{\varepsilon}(0))=(x_{0},x^{'}_{0})$, we get
$h^{0}(\omega,(x_{0},x^{'}_{0}))=-\frac{1}{10}x_{0}x_{0}^{'}$
and
\begin{align*}
&h^{1}(\omega,(x_{0},x^{'}_{0}))\\&=(\frac{10x_{0}-x_{0}^{3}}{20}-\frac{x_{0}x_{0}^{'}}{12})\frac{x_{0}^{'}}{10}+\frac{\sigma(3x_{0}^{2} -40)x_{0}^{'}}{1200}\int_{-\infty}^{0}e^{t}\Big[\int_{0}^{t}\int_{-\infty}^{s}e^{s-r}dL_{r}^{\alpha}ds\Big]dt\\
  &+(\frac{x^{'}_{0}}{3}+\frac{1}{2}\sin x_{0}\cos x_{0}^{'}+\frac{(x_{0}x_{0}^{'})^{2}}{800})\frac{x_{0}}{10}-\frac{\sigma^{2} x_{0}}{80}\int_{-\infty}^{0}e^{t}\Big[\int_{0}^{t}\left(\int_{-\infty}^{s}e^{s-r}dL_{r}^{\alpha}\right)^{2}ds\Big]dt.
\end{align*}
We get an approximated slow manifold $\tilde{\mathcal{M}}^{\varepsilon}(\omega)=\{((x_{0},x^{'}_{0}),\tilde{h}(\omega,(x_{0},x^{'}_{0})): (x_{0},x^{'}_{0})\in\mathbb{R}^{2}\}$ of system \eqref{6.4}, where $\tilde{h}^{\varepsilon}(\omega,(x_{0},x^{'}_{0}))=h^{0}(\omega,(x_{0},x^{'}_{0}))+\varepsilon h^{1}(\omega,(x_{0},x^{'}_{0}))+\mathcal{O}(\varepsilon^{2})$. The reduced system of \eqref{xin2} given by
\begin{eqnarray}\label{rd2}
\left\{\begin{array}{l}
 d\bar{x}_{1}^\varepsilon=\frac{1}{2}\varepsilon \bar{x}_{1}^\varepsilon dt+\varepsilon(-\frac{(\bar{x}_{1}^\varepsilon)^{3}+\bar{x}_{1}^\varepsilon \bar{x}_{2}^\varepsilon}{20}+\frac{\bar{y}^\varepsilon}{3})dt,\\
 d\bar{x}_{2}^\varepsilon=\frac{1}{3}\varepsilon \bar{x}_{2}^\varepsilon dt+\varepsilon(\frac{1}{2}\sin \bar{x}_{1}^\varepsilon\cos \bar{x}_{2}^\varepsilon+\frac{(\bar{y}^\varepsilon)^{2}}{8})dt,\\
\bar{y}^\varepsilon=\tilde{h}^{\varepsilon}(\theta_{t}\omega,(\bar{x}_{1}^\varepsilon,\bar{x}_{2}^\varepsilon)).
 \end{array}\right.
\end{eqnarray}
\begin{figure}[H]
\centering
\includegraphics[width=3.62in]{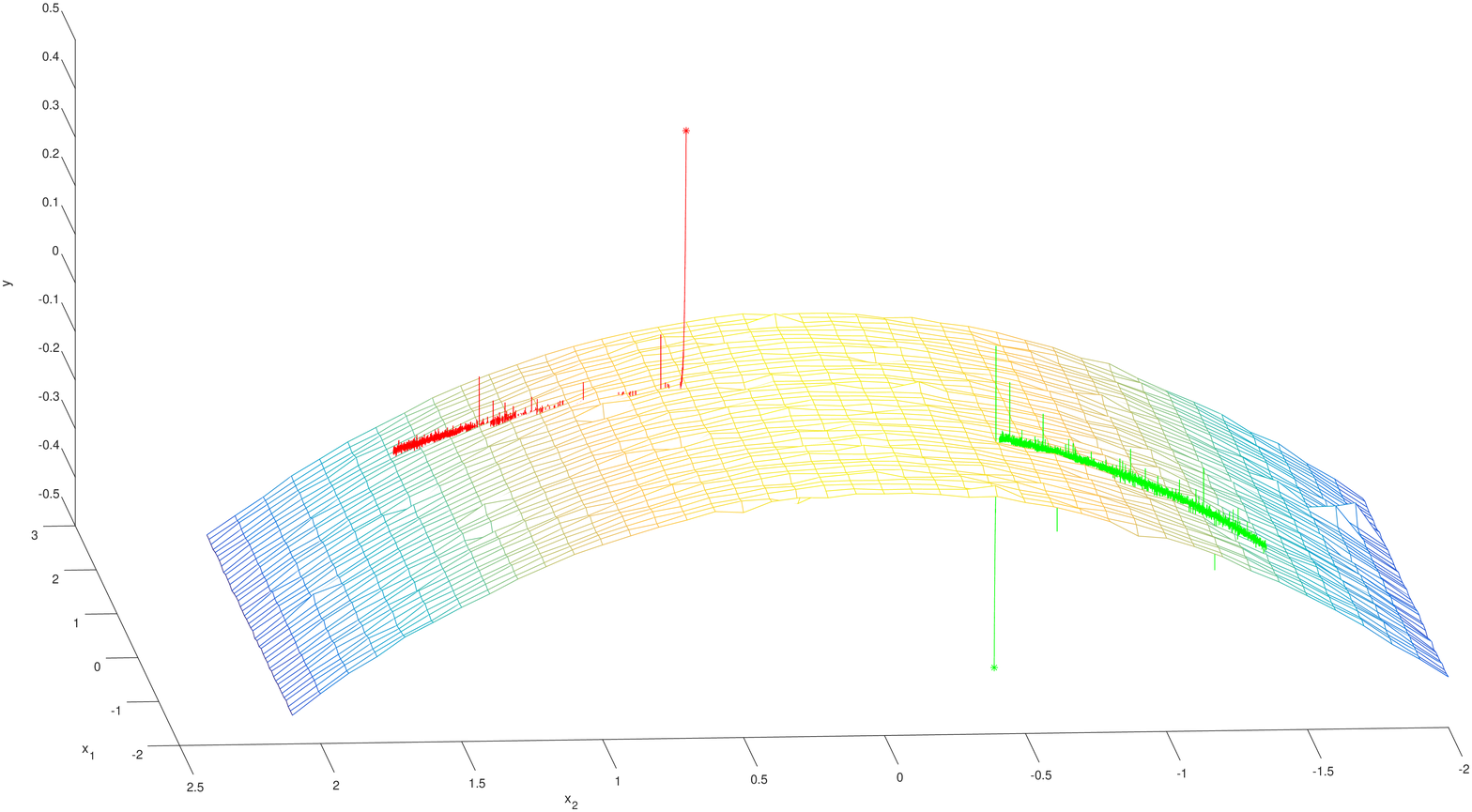}
\includegraphics[width=2.22in]{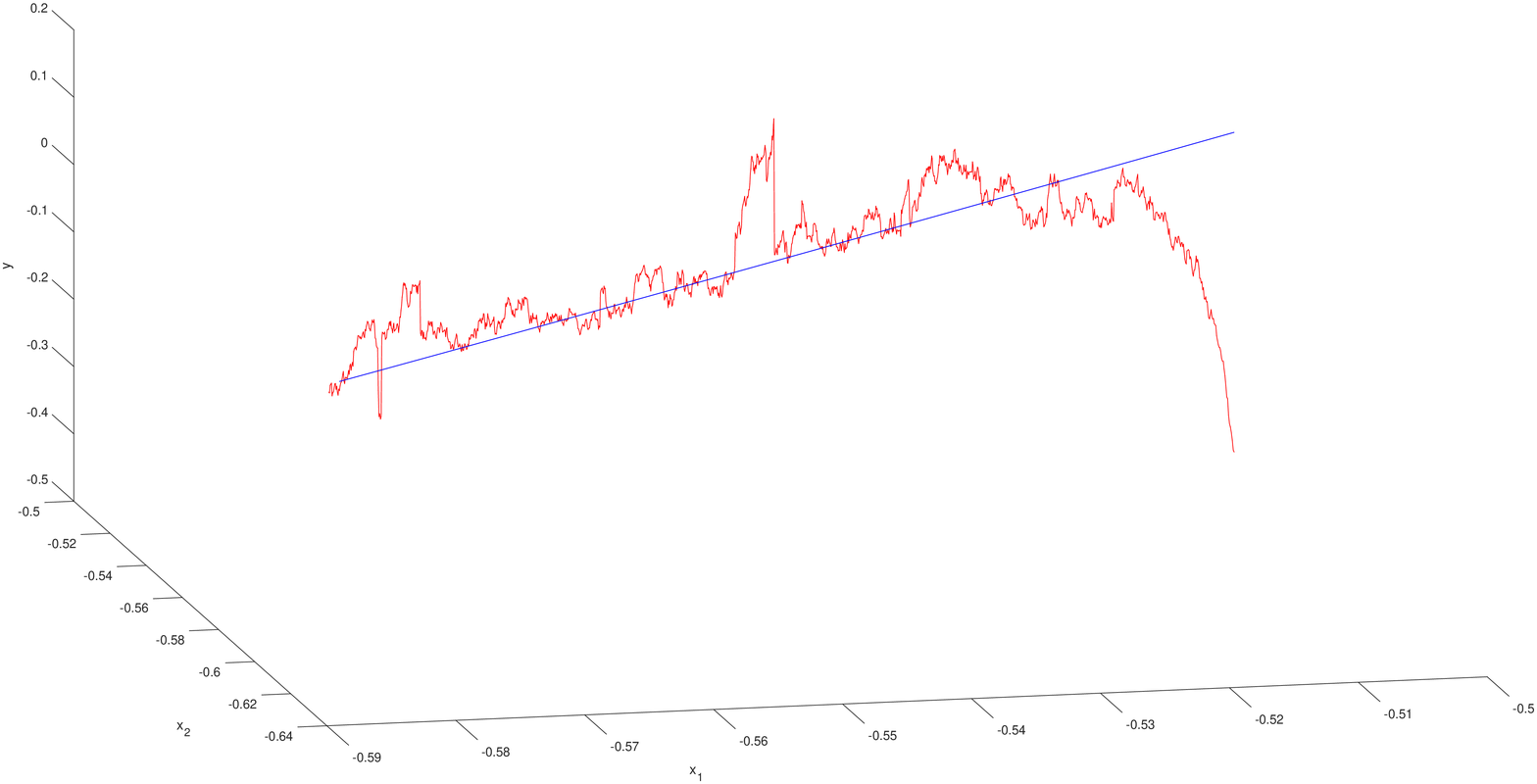}
\caption{(Online color)Two orbits of slow-fast system \eqref{xin2} and its slow manifold expansion $\tilde{h}^{\varepsilon}(\omega,(x_{0},x^{'}_{0}))$; and orbits for system \eqref{xin2} and slow manifold reduced system \eqref{rd2}: $\varepsilon=0.01$, $\alpha=1.8$, $\sigma=0.1$ (left) and $\sigma=0.05$ (right).}
\end{figure}
\end{example}
\begin{example}
Consider a three dimensional model of FitzHugh-Nagumo system
\begin{eqnarray}\label{6.5}
\left\{\begin{array}{l}
 dx^{\varepsilon}=\frac{1}{3}x^{\varepsilon}dt+\frac{x^{\varepsilon}-(x^{\varepsilon})^{3}+\sin y_{1}^{\varepsilon}\cos y_{2}^{\varepsilon}}{50}dt,~~x\in\mathbb{R} \\
 dy_{1}^{\varepsilon}=-\frac{1}{\varepsilon}y_{1}^{\varepsilon}dt+\frac{1}{5\varepsilon}sin x^{\varepsilon}
  dt+\sigma\varepsilon^{-\frac{1}{\alpha_{1}}}dL_t^{\alpha_{1}},~~y_{1}^{\varepsilon}\in\mathbb{R}\\
dy_{2}^{\varepsilon}=-\frac{1}{\varepsilon}y_{2}^{\varepsilon}dt-\frac{1}{16\varepsilon} (x^{\varepsilon})^{2}dt+\sigma\varepsilon^{-\frac{1}{\alpha_{2}}}dL_t^{\alpha_{2}},~~y_{2}^{\varepsilon}\in\mathbb{R} \\
 \end{array}\right.
\end{eqnarray}
where $x^\varepsilon$
is the ``slow" component, $(y_{1}^\varepsilon,y_{2}^\varepsilon)$ is the ``fast" component, $\gamma_{s}=\frac{1}{3}$, $\gamma_{f}=-1$, $K<1$, $g(x^\varepsilon, y_{1}^\varepsilon, y_{2}^\varepsilon)=\frac{x^\varepsilon-(x^\varepsilon)^{3}+\sin y_{1}^\varepsilon\cos y_{2}^\varepsilon}{50}$, $g_{1}(x^\varepsilon, y_{1}^\varepsilon, y_{2}^\varepsilon)=\frac{1}{5}sin x^\varepsilon$, $g_{2}(x^\varepsilon, y_{1}^\varepsilon, y_{2}^\varepsilon)=-\frac{(x ^{\varepsilon})^{2}}{16}$, and $L_t^{\alpha_{1}}$, $L_t^{\alpha_{2}}$ are independent two-sided $\alpha$-stable L\'evy motion on $\mathbb{R}$ with $1<\alpha<2$.

By using the scaling $t\rightarrow\varepsilon t$, we have
\begin{eqnarray}\label{xin3}
\left\{\begin{array}{l}
 dx^{\varepsilon}=\frac{1}{3}\varepsilon x^{\varepsilon}dt+\varepsilon\frac{x^{\varepsilon}-(x^{\varepsilon})^{3}+\sin y_{1}^{\varepsilon}\cos y_{2}^{\varepsilon}}{50}dt,\\
 dy_{1}^{\varepsilon}=-y_{1}^{\varepsilon}dt+\frac{1}{5}sin x^{\varepsilon}
  dt+\sigma dL_t^{\alpha_{1}},\\[0.3mm]
dy_{2}^{\varepsilon}=-y_{2}^{\varepsilon}dt-\frac{1}{16} (x^{\varepsilon})^{2}dt+\sigma dL_t^{\alpha_{2}},\\
 \end{array}\right.
\end{eqnarray}
which can be transformed into
\begin{eqnarray}\label{6.6}
\left\{\begin{array}{l}
 d\tilde{x}^\varepsilon=\frac{1}{3}\varepsilon \tilde{x}^\varepsilon dt+\varepsilon\frac{\tilde{x}^\varepsilon-(\tilde{x}^\varepsilon)^{3}+\sin( \tilde{y}_{1}^\varepsilon+\sigma\xi_{1}(\theta_{t}\omega))\cos(\tilde{y}_{2}^\varepsilon+\sigma\xi_{2}(\theta_{t}\omega))}{50}dt \\[0.5mm]
 d\tilde{y}_{1}^\varepsilon=-\tilde{y}_{1}^\varepsilon dt+\frac{1}{5}sin \tilde{x}^\varepsilon dt\\[0.5mm]
d\tilde{y}_{2}^\varepsilon=-\tilde{y}_{2}^\varepsilon dt-\frac{1}{16}(\tilde{x}^\varepsilon)^{2}dt\\
 \end{array}\right.
\end{eqnarray}
where
$\xi_{i}(\theta_{t}\omega)=\int_{-\infty}^{t} e^{(t-s)}dL_{s}^{\alpha_{i}}, i=1, 2.$

Denote $\tilde{x}(0)=x_{0}$, we get
\begin{equation*}
h^{0}(\omega,x_{0})
= \left(
  \begin{array}{ccc}
  \frac{1}{5}sin x_{0}\\[1mm]
  -\frac{1}{16}x^{2}_{0}\\
  \end{array}
\right).
\end{equation*}
and
{\small\begin{align*}
&h^{1}(\omega,x_{0})\\&= \left(
  \begin{array}{ccc}
  \frac{1}{5}\cos x_{0}\big[\frac{x_{0}^{3}}{50}-\frac{53x_{0}}{150}+\int_{-\infty}^{0}e^{t}\int_{0}^{t}\frac{\sin( \frac{1}{5}sin x_{0}+\sigma\int_{-\infty}^{s}e^{s-r}dL_{r}^{\alpha_{1}}ds)\cos(-\frac{1}{16}x^{2}_{0}+\sigma\int_{-\infty}^{s}e^{s-r}dL_{r}^{\alpha_{2}})}{50}dsdt\big]\\[1mm]
  -\frac{1}{8}x_{0}\big[\frac{x_{0}^{3}}{50}-\frac{53x_{0}}{150}+\int_{-\infty}^{0}e^{t}\int_{0}^{t}\frac{\sin( \frac{1}{5}sin x_{0}+\sigma\int_{-\infty}^{s}e^{s-r}dL_{r}^{\alpha_{1}}ds)\cos(-\frac{1}{16} x^{2}_{0}+\sigma\int_{-\infty}^{s}e^{s-r}dL_{r}^{\alpha_{2}})}{50}dsdt\big]\\
  \end{array}
\right).
\end{align*}}
 Then we obtain an approximated random slow manifold $\tilde{\mathcal{M}}(\omega)=\{(x_{0},\tilde{h}(\omega,x_{0}): x_{0}\in\mathbb{R}^{2}\}$ of system \eqref{6.6}, where $\tilde{h}(\omega,x_{0})=(\tilde{h}_{1}(\omega,x_{0}),\tilde{h}_{2}(\omega,x_{0}))=h^{0}(\omega,x_{0})+\varepsilon h^{1}(\omega,x_{0})+\mathcal{O}(\varepsilon^{2})$. Moreover,
 \begin{eqnarray}\label{rd3}
\left\{\begin{array}{l}
 d\bar{x}^{\varepsilon}=\frac{1}{3}\varepsilon \bar{x}^{\varepsilon}dt+\varepsilon\frac{\bar{x}^{\varepsilon}-(\bar{x}^{\varepsilon})^{3}+\sin \bar{y}_{1}^{\varepsilon}\cos \bar{y}_{2}^{\varepsilon}}{50}dt,\\
 \bar{y}_{1}^{\varepsilon}=\tilde{h}_{1}(\theta_{t}\omega,\bar{x}^{\varepsilon}),\\
\bar{y}_{2}^{\varepsilon}=\tilde{h}_{2}(\theta_{t}\omega,\bar{x}^{\varepsilon}),\\
 \end{array}\right.
\end{eqnarray}
is the reduction system of \eqref{xin3}.
\begin{figure}[H]
\centering
\includegraphics[width=2.92in]{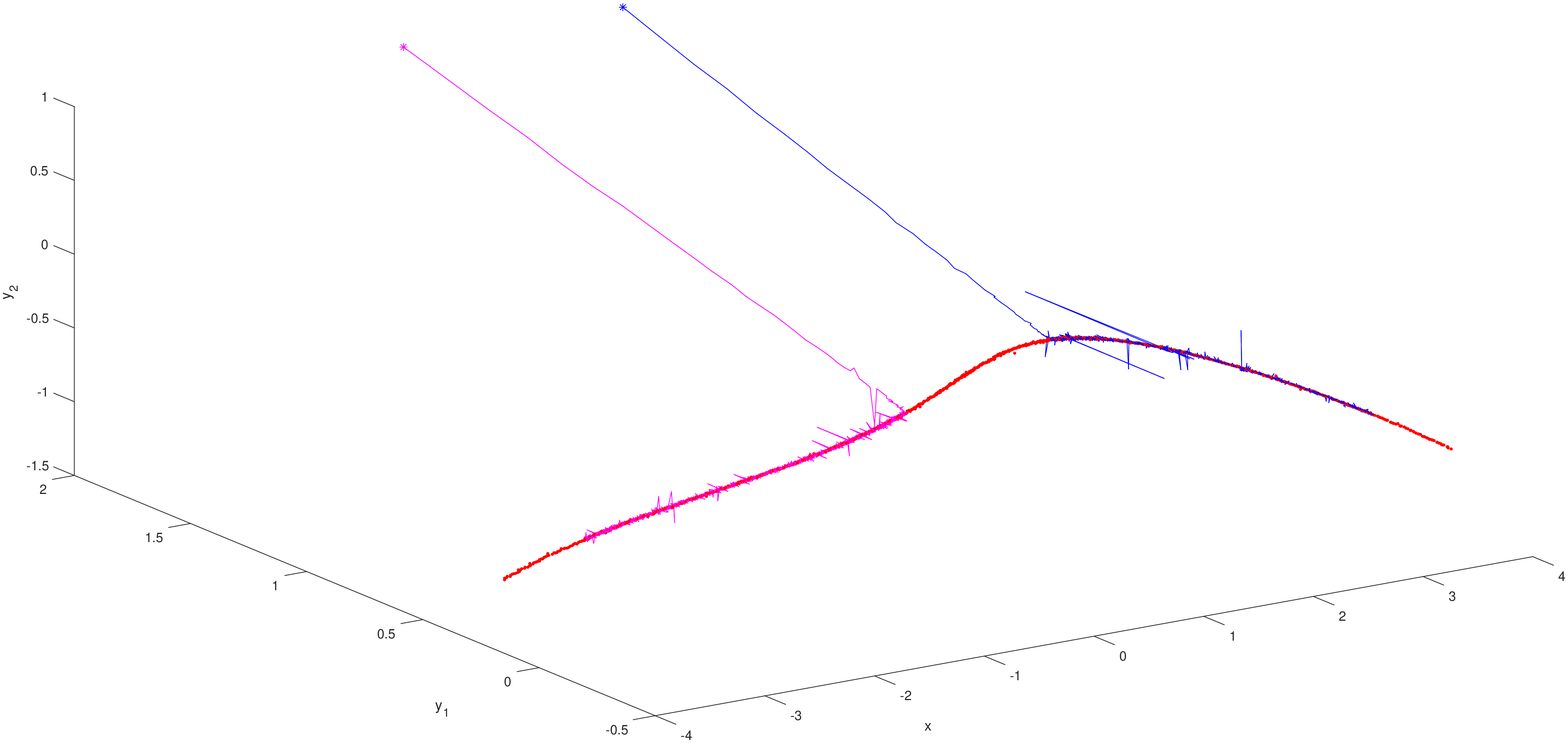}
\includegraphics[width=2.92in]{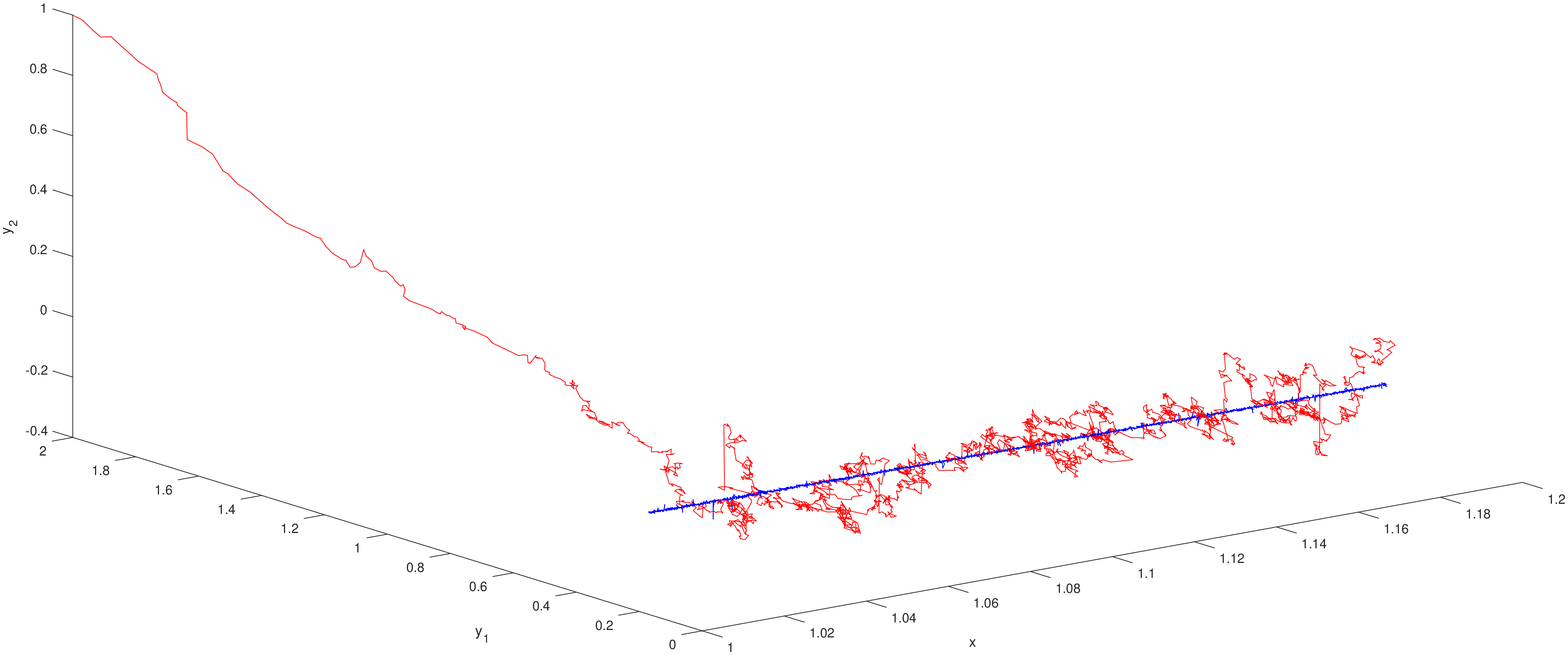}
\caption{(Online color)Two orbits of slow-fast system \eqref{xin3} and its slow manifold expansion $\tilde{h}^{\varepsilon}(\omega,(x_{0},x^{'}_{0}))$; and orbits for system \eqref{xin3} and slow manifold reduced system \eqref{rd3}: $\varepsilon=0.01$, $\alpha_{1}=1.9$, $\alpha_{2}=1.7$, $\sigma=0.1$ .}
\end{figure}
\end{example}
\textbf{Acknowledgements.} The authors are grateful to Bj{\"o}rn Schmalfu{\ss}, Ren\'e Schilling, Georg Gottwald, Jicheng
Liu and Jinlong Wei for helpful discussions on stochastic differenial equations driven by L\'evy motions.


\end{document}